\numberwithin{equation}{section}
\newtheorem{theorem} {Theorem} [section]
\newtheorem{proposition}[theorem]{Proposition}
\newtheorem{corollary}  [theorem]     {Corollary}
\newtheorem{lemma}  [theorem]     {Lemma}
\newtheorem{question}  [theorem]     {Question}
\newtheorem{remark}  [theorem]     {Remark}
\newtheorem{conjecture}  [theorem]     {Conjecture}
\theoremstyle{definition}
\newtheorem{example}  [theorem]     {Example}
\newcommand{\Rm}{\operatorname {R}}
\newcommand{\ad}{\operatorname{ad}}
\newcommand{\Ric}{\operatorname{Ric}}
\renewcommand*\backref[1]{}
\renewcommand*\backrefalt[4]{ \ifcase #1 \or (cited on page #2) \else (cited on pages #2) \fi}
\begin{document}

\title{A classification of locally Chern homogeneous Hermitian manifolds}

\author{Lei Ni}
\address{Lei Ni. Department of Mathematics, University of California, San Diego, La Jolla, CA 92093, USA}
\email{leni@ucsd.edu}

\author{Fangyang Zheng} \thanks{Zheng is the corresponding author. The research is partially supported by NSFC grants \# 12071050  and 12141101, Chongqing grant cstc2021ycjh-bgzxm0139 and Chongqing Normal University grant 19XRC001, and is supported by the 111 Project D21024.}
\address{Fangyang Zheng. School of Mathematical Sciences, Chongqing Normal University, Chongqing 401331, China}
\email{20190045@cqnu.edu.cn; \ franciszheng@yahoo.com }

\subjclass[2020]{53C55 (primary), 53C05 (secondary)}
\keywords{Hermitian manifold; Chern connection; Ambrose-Singer connections; Hermitian (locally) homogeneous manifolds}

\begin{abstract}
We apply the algebraic consideration of holonomy systems to study  Hermitian manifolds whose Chern connection is Ambrose-Singer and prove structure theorems for such manifolds. The main result (Theorem 1.2) asserts that the universal cover of such a Hermitian manifold must be the product of a complex Lie group and Hermitian symmetric spaces, which was previously proved up to complex dimension four by the authors.  This in some sense is the Hermitian version of Cartan's classification of Hermitian symmetric spaces. We also obtain  results on Hermitian manifolds whose Bismut connection is Ambrose-Singer when the complex dimension $n\le 4$. Furthermore we discuss a project of classifying such manifolds via Alekseevski\u{i} and Kimel\!\'{}\!fel\!\'{}\!d type theorems, which we establish for the family of the Gauduchon connections on a compact Hermitian manifold except for the Bismut connection.
\end{abstract}

\subjclass[2020]{ 53C55 (Primary), 53C05 (Secondary)}
\keywords{Ambrose-Singer connection, Chern connection, holonomy algebra/group/system, Chern flat, locally homogeneous Hermitian manifolds}

\maketitle

\markleft{Ni and Zheng}
\markright{locally Chern homogeneous Hermitian manifolds}

\tableofcontents

\section{Introduction and main results}

In \cite{AS} Ambrose and Singer  gave a necessary and sufficient condition for a simply-connected complete Riemannian manifold to admit a transitive group of isometric motions. These conditions were recognized later (\cite{Kostant-Nagoya}) as the existence of an affine connection which is {\it invariant under parallelism,} that is, the connection has parallel torsion and  curvature with respect to itself. Namely

\begin{theorem}[Ambrose-Singer, Kostant] Let $(M, g)$ be a complete, simply-connected Riemannian manifold. Then $M$ admits a transitive action of an isometric group (namely $M$ is a Riemannian homogeneous space)  if and only if there exists a metric connection $D$ such that its torsion and curvature are parallel with respect to $D$.
\end{theorem}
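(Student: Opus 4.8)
The plan is to prove the two implications separately. The forward direction (``homogeneous $\Rightarrow$ such a $D$'') is essentially formal, while the converse carries the real content and is where completeness and simple connectedness enter.

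\emph{Homogeneous $\Rightarrow$ a suitable $D$.} Write $(M,g)=G/H$ with $G$ a transitive group of isometries (which may be taken closed in $\mathrm{Isom}(M)$, so that the isotropy group $H$ of a base point $o$ is compact). Averaging an arbitrary complement over $H$ yields an $\mathrm{Ad}(H)$-invariant splitting $\mathfrak g=\mathfrak h\oplus\mathfrak m$, so the fibration is reductive; let $D$ be its canonical connection. By construction $D$ is $G$-invariant, hence so are its torsion $T$ and curvature $R$; then $DT$ and $DR$ are $G$-invariant tensor fields vanishing at $o$ (the canonical connection has parallel torsion and curvature at the base point, since $\mathfrak m$ is $\mathrm{Ad}(H)$-invariant), hence vanish identically. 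That $D$ is metric is immediate from $G$-invariance of $g$.

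\emph{A suitable $D$ $\Rightarrow$ homogeneous: the infinitesimal model.} Now assume $(M,g)$ is complete and simply connected and $D$ is metric with $DT=0=DR$. Fix $p\in M$, set $V=T_pM$ with inner product $g_p$, and let $\mathfrak h\subseteq\mathfrak{so}(V)$ be the holonomy algebra of $D$ at $p$. Since $R$ and $T$ are $D$-parallel, parallel transport fixes $R_p$ and $T_p$; by the Ambrose--Singer holonomy theorem $\mathfrak h$ is then the span of the operators $R_p(u,v)$, and $R_p,T_p$ are $\mathfrak h$-equivariant. Define a bracket on $\mathfrak g:=\mathfrak h\oplus V$ by retaining the bracket of $\mathfrak h$, setting $[A,v]=Av$ for $A\in\mathfrak h,\ v\in V$, and $[u,v]=R_p(u,v)-T_p(u,v)$ for $u,v\in V$ (with $R_p(u,v)\in\mathfrak h$ and $T_p(u,v)\in V$). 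A term-by-term check shows the Jacobi identity is equivalent to the first and second Bianchi identities for $D$ together with $DT=0$ and $DR=0$; thus $\mathfrak g$ is a Lie algebra, the infinitesimal model of $(M,g,D)$ at $p$.

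\emph{Integration.} Following \cite{AS}, transport this picture to the holonomy subbundle $P\subseteq O(M)$ of the orthonormal frame bundle: the solder form $\theta$ and the connection form $\omega$ of $D$ form a $\mathfrak g$-valued coframe on $P$, and $DT=0$, $DR=0$ say precisely that $d(\theta,\omega)$ equals the constant-coefficient quadratic expression in $(\theta,\omega)$ prescribed by the bracket above, i.e.\ $(\theta,\omega)$ satisfies the Maurer--Cartan equation of $\mathfrak g$. One checks that the dual frame on $P$ is complete: along a $D$-geodesic the $g$-speed is constant because $S:=\nabla-D$ is $g$-skew, and this together with geodesic completeness of $g$ forces $D$-geodesics, hence the frame on $P$, to be complete. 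A complete Maurer--Cartan coframe develops $P$ onto the simply-connected group $G$ with $\mathrm{Lie}(G)=\mathfrak g$ (up to a discrete quotient); simple connectedness of $M$ makes $P$ connected and the developing map single-valued, and a short argument identifies $M$ with $G/H$, $H$ the connected subgroup with algebra $\mathfrak h$, the left $G$-action descending to a transitive isometric action on $M$. The delicate point is exactly this last step: converting the infinitesimal identity ``$d(\text{coframe})=$ quadratic in the coframe'' into an honest global group action. Completeness integrates the coframe (equivalently, makes $\exp^D$ everywhere defined and the local isometries built from $D$-parallel transport along $D$-geodesics globally defined), and simple connectedness kills monodromy so these patch into one Lie group of isometries; verifying that the $D$-parallel tensors $R$ and $T$ upgrade the infinitesimal model to a genuinely homogeneous, not merely locally homogeneous, structure is the technical heart of the argument.
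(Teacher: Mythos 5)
Your outline is correct, but note that the paper does not prove this statement at all: it is quoted as a classical background result, with the proof deferred to \cite{AS} and to Theorem 2 and Lemma 3 of \cite{Kostant-Nagoya}. What you have written is precisely the standard argument from those sources --- canonical connection of a reductive decomposition for the forward direction, and the Nomizu infinitesimal model $\mathfrak g=\mathfrak h\oplus V$ with bracket $[u,v]=R_p(u,v)-T_p(u,v)$ followed by development/integration for the converse --- and the algebraic heart of your converse (that the Bianchi identities plus $DT=DR=0$ make $\mathfrak h\oplus V$ a Lie algebra) is exactly what the paper records later as Theorem \ref{thm:Nomizu}. The only place where your write-up is a sketch rather than a proof is the integration step, which you flag yourself; that step (completeness of the $D$-coframe on the holonomy bundle, single-valuedness of the developing map under simple connectedness, and the identification $M\cong G/H$) is the actual content of \cite{AS} and \cite{Kostant-Nagoya} and would need to be carried out in full for a self-contained proof.
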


The conditions were originally expressed in terms of  more convoluted PDEs involving the curvature tensor of the Levi-Civita connection $\nabla^g$ (also denoted as $\nabla^{LC}$) and the torsion of the metric connection $D$ ($D\ne \nabla^g$ unless $(M, g)$ is a symmetric space)  as a characterization of simply-connected complete Riemannian homogeneous spaces (cf.\,\cite{AS}). The above formulation factored in the results of \cite{Kostant-Nagoya} (Theorem 2 and Lemma 3). This theorem provided  a generalization of Cartan's results  on the symmetric (or locally symmetric) spaces.\footnote{In a private conversation, R. Hamilton suggested that the convergence of a Ricci flow solution to a homogeneous structure perhaps can be captured by the emergence of an Ambrose-Singer connection.}
Without the simply-connectedness the result holds locally. The metric connection $D$ is called an {\em Ambrose-Singer connection} (abbreviated as an AS connection).

 It is well known that on a Hermitian manifold there exists a unique canonical connection, namely the Chern connection \cite{Chern} (in fact the existence, as well as the uniqueness,  holds for any holomorphic vector bundle over a complex manifold). In this paper we first  study complete Hermitian manifolds that are {\it locally Chern homogeneous,} namely, its Chern connection $\nabla$ satisfies  the condition $\nabla T=0$ and $\nabla R=0$, where $T$ and $R$ are respectively the torsion and curvature of $\nabla$. This notion is motivated by the above work of Ambrose-Singer \cite{AS} and others such as \cite{Kob, Nomizu, Kostant-Nagoya} (see also \cite{Ballmann}). In particular, there is a corresponding result of Sekigawa \cite{Seki} for Hermitian manifolds which asserts that a complete simply-connected Hermitian manifold is Hermitian homogeneous (namely Hermitian manifolds whose biholomorphic isometries act transitively) if and only if there exists a Hermitian Ambrose-Singer connection, namely in addition to the assumptions of having parallel torsion and curvature,  the metric connection also preserves the almost complex structure.

We  refer the readers to \cite{Kostant, Singer, TV, Wang, Wang-PAMS} for other related studies on homogeneous complex manifolds (namely complex manifolds whose biholomorphism groups act transitively)  and other  homogeneous spaces.
One should also consult \cite{Ang-Ped} for some recent developments on local Hermitian homogeneous spaces and the flows on such spaces. Locally Chern homogeneous Hermitian manifolds form a special class of locally homogenous complex manifolds as shown by the theorem below.

  For a locally Chern homogeneous manifold $(M, g)$, namely when the Chern connection of a Hermitian manifold $(M^n, g)$ is Ambrose-Singer, we also call  the complex manifold $M$ admits a {\em CAS structure}. In our earlier work \cite[Theorem 3.6]{NZ-CAS} conditions were given for the universal cover of such a manifold to admit some K\"ahler (hence  Hermitian symmetric due to $\nabla R=0$) de Rham factors, and classification  theorems were obtained in complex dimension $3$ and $4$ under the additional compactness assumption of the manifold.   The examples of manifolds with CAS structure include complex Lie groups endowed with compatible  left-invariant metrics and Hermitian symmetric spaces, their products and quotients of products. In this paper, we prove that the reverse holds by the following classification result for locally Chern homogeneous manifolds in general dimensions, which is the first main result of this article:

 \begin{theorem}\label{thm:Class} Let $(M^n, g)$ be a complete Hermitian manifold with a CAS structure, namely a locally Chern homogeneous Hermitian manifold. Then its universal cover $\widetilde{M}$ splits into $M_1\times M_2$, where $M_1$ is a Chern flat Hermitian manifold with a complex Lie group structure, and $M_2$ is the product of irreducible Hermitian symmetric spaces of dimension $k$ with $0\leq k\leq n$.
\end{theorem}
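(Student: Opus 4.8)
The plan is to exploit the theory of holonomy systems à la Simons/Ambrose-Singer, adapted to the Chern setting. Since $\nabla T = 0$ and $\nabla R = 0$, the Chern curvature $R$ and torsion $T$ give rise to an algebraic holonomy system on the model tangent space $V = T_x M \cong \mathbb{C}^n$: the Chern holonomy group $\mathrm{Hol} \subseteq U(n)$ acts on $V$, its Lie algebra $\mathfrak{h}$ contains the image of the curvature operator, and both $R$ and $T$ are $\mathrm{Hol}$-invariant tensors. The first step is to decompose $V$ under $\mathrm{Hol}$ into irreducible unitary summands $V = V_0 \oplus V_1 \oplus \cdots \oplus V_m$, where $V_0$ is the fixed subspace (trivial summand) and each $V_i$, $i\ge 1$, is $\mathrm{Hol}$-irreducible. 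A de Rham type argument — here one needs the Hermitian analogue, which should be available from \cite{NZ-CAS} or provable directly using that the distributions defined by the $V_i$ are parallel and integrable (parallel torsion controls the obstruction to integrability) — shows that the universal cover $\widetilde{M}$ splits as a Hermitian product $\widetilde{M} = N_0 \times N_1 \times \cdots \times N_m$ with $N_i$ carrying the induced CAS structure and irreducible Chern holonomy for $i \ge 1$, and $N_0$ Chern-flat-in-the-holonomy-sense on which $\mathrm{Hol}$ acts trivially.

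Next I would treat the two types of factors separately. For the flat-holonomy factor $N_0$: trivial Chern holonomy means $R \equiv 0$, i.e.\ $N_0$ is Chern flat; by the Boothby result (Chern flat $\Rightarrow$ the manifold is, up to cover, a complex Lie group with bi-invariant-type metric) one identifies $N_0 = M_1$ as a complex Lie group with its Chern-flat left-invariant metric. For each irreducible factor $N_i$, $i \ge 1$: the holonomy system $(V_i, \mathfrak{h}_i, R_i)$ is irreducible, so by Simons' theorem it is either \emph{symmetric} (the curvature is parallel in the strong sense forcing $\mathfrak{h}_i$ to be a symmetric-space isotropy algebra) or $\mathrm{Hol}_i$ is transitive on the unit sphere of $V_i$. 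I must rule out the transitive-on-sphere case when combined with the existence of a nonzero parallel torsion $T_i$ on $V_i$: a $\mathrm{Hol}_i$-invariant torsion $T_i \in \Lambda^{2,0}V_i^* \otimes V_i$ is a very restrictive object, and for the groups acting transitively on spheres (essentially $SU$, $Sp\cdot U(1)$, $G_2$, $Spin(7)$, $Spin(9)$ in the relevant real forms, cut down to those compatible with the complex structure) one checks the invariant-torsion space is zero; hence $T_i = 0$ on each irreducible non-symmetric factor. But $T_i = 0$ means the Chern connection on $N_i$ is Kähler, and $\nabla R_i = 0$ then says $N_i$ is a (locally) Hermitian symmetric space; irreducibility plus completeness upgrades this to an irreducible Hermitian symmetric space. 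The symmetric case of Simons' dichotomy lands in the same place directly.

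Finally, I would assemble the factors: group all the Lie-group (Chern flat) pieces — here one should note that a product of complex Lie groups is again a complex Lie group, so the flat part consolidates into a single $M_1$ — and all the irreducible Hermitian symmetric pieces into $M_2$, giving $\widetilde{M} = M_1 \times M_2$ with $M_2$ a product of irreducible Hermitian symmetric spaces of some dimension $k$, $0 \le k \le n$, as claimed. The completeness hypothesis is used twice: to run the de Rham splitting on the universal cover, and to pass from \emph{locally} symmetric (from $\nabla R = 0$) to globally symmetric on the simply connected factors.

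I expect the main obstacle to be the step showing \emph{nonzero parallel Chern torsion is incompatible with irreducible non-symmetric holonomy} — i.e.\ the case analysis ruling out the sphere-transitive holonomy groups by showing their spaces of invariant $(2,0)$-torsion tensors vanish. A secondary subtlety is establishing the Hermitian de Rham decomposition in the presence of torsion: one must verify that the parallel $\mathrm{Hol}$-invariant distributions $V_i$ are not merely parallel but that the torsion does not obstruct the local product structure (the mixed components $T(V_i, V_j) $ must vanish, which follows from $\mathrm{Hol}$-invariance of $T$ together with irreducibility and Schur's lemma). These two points are where the "holonomy system" philosophy earns its keep and where the bulk of the technical work will lie.
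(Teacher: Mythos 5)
Your proposal does not go through, and the failure is at the two places you yourself flagged as the ``main obstacle'' and the ``secondary subtlety''; unfortunately both are fatal rather than technical. First, the manifold-level de~Rham splitting into irreducible holonomy factors is false for the Chern connection of a CAS manifold. The Chern torsion maps $\Lambda^{2,0}$ of each nontrivial irreducible summand into the subbundle $\mathcal{W}$ spanned by the image of $T$, on which the holonomy acts trivially (on the universal cover $\mathcal{W}$ is generated by parallel sections). Schur's lemma does not kill these components: $\Lambda^{2,0}\mathcal{K}_j^*$ contains trivial summands --- precisely the parallel symplectic forms $\tau_i=\langle T(\cdot,\cdot),\overline{Z}_i\rangle$ that the paper constructs --- so $T(\mathcal{K}_j,\mathcal{K}_j)\subseteq\mathcal{W}\neq 0$ and the non-K\"ahler part does \emph{not} split as a product of the $\mathcal{K}_j$. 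The paper only splits off the K\"ahler factor $\mathcal{N}_0=\ker T$ (its Proposition 4.1); the rest, $M_1$, stays in one piece, and only the holonomy \emph{representation} on $T'M_1$ decomposes. Second, the Simons dichotomy is misapplied: for a CAS manifold the holonomy system is automatically symmetric (since $\nabla R=0$ gives $\gamma(R)=R$ for every holonomy element, the paper's Proposition 3.2), so the sphere-transitive branch --- and with it your entire case analysis of invariant torsion spaces for $SU$, $Sp\cdot U(1)$, $G_2$, etc. --- never arises. And the symmetric branch does \emph{not} ``land in the same place'': with nonvanishing torsion present, a symmetric holonomy system on $\mathcal{K}_j$ does not make $N_j$ a Hermitian symmetric space. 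Indeed the correct conclusion on these summands is that they are \emph{flat} and become part of the complex Lie group $M_1$, not additional non-flat symmetric factors; your argument would produce a structurally wrong answer.

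The mechanism you are missing is the Ricci-flatness step. The paper contracts the torsion with a parallel frame $Z_1,\dots,Z_{\ell_1}$ of $\mathcal{W}$ to get parallel holomorphic $(2,0)$-forms, picks one of maximal rank, and shows it restricts to a nondegenerate parallel symplectic form on each nontrivial irreducible summand $\mathcal{K}_j$; this forces the restricted holonomy into $SU(2k_j)$, hence $R|_{\mathcal{K}_j}$ is trace-free and Ricci flat (Corollary 2.4). Then an algebraic Alekseevski\u{\i}--Kimel\!\'{}\!fel\!\'{}\!d type theorem for irreducible symmetric holonomy systems (Proposition 3.3: Ricci flat implies flat, proved via the Nomizu--Kostant Lie algebra $\mathfrak{J}=\mathfrak{g}\oplus V$ and its Killing form) kills the curvature on every summand, so $M_1$ is Chern flat; Chern flatness plus parallel torsion then yields global parallel holomorphic vector fields with constant structure functions satisfying the Jacobi identity, and Cartan's theorem produces the complex Lie group structure. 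Your Boothby step for the trivial-holonomy part is consistent with this last move, but it only covers the summand where the holonomy already acts trivially, whereas the real content of the theorem is that the a priori nontrivial summands $\mathcal{K}_j$ are also flat.
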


In \cite{NZ-CAS}, it was shown that the above theorem holds when $n\leq 4$ for the case that $M$ is compact. The proof of the above theorem consists of three parts. The first part is to split off the K\"ahler de Rham factors (if any) in the universal cover of $M$, which correspond to the kernel distribution of the Chern torsion. Note that  since the connection is {\em not} Levi-Civita (when the metric is not K\"ahler), the de Rham decomposition theorem no longer holds to its full generality, but the CAS assumption will enable us to get the desired splitting. The second part is to show that a CAS manifold without K\"ahler de Rham factors must be (first) Chern Ricci flat. This is achieved by constructing a holomorphic symplectic form on the manifold. These two parts were obtained in
\cite{NZ-CAS}, and we will recall and give their outline in the next section for readers' convenience.  The third part is an algebraic analogue to the classic theorem of Alekseevski\u{i} and Kimel\!\'{}\!fel\!\'{}\!d \cite{AK}, where we show that for an abstract holonomy system tailored to our CAS situation, if it is Ricci flat then it is flat. While the first two steps are quite different from the de Rham theorem for the Symmetric spaces, the third step is close to Cartan's work on the symmetric spaces (Hermitian symmetric spaces) via the Riemannian symmetric Lie algebras (Hermitian Lie algebras) and the holonomy groups \cite{Cartan-27} (cf. Simons' proof \cite{Sim} of Berger's holonomy theorem). Here we also consider a generalized holonomy system for connections with  non-vanishing torsion. Although results of Ambrose-Singer and Segigawa together with the work of \cite{Nomizu, Kostant-Nagoya} have indicated that the CAS manifolds admit many locally infinitesimal affine transformations, a completely classification as Theorem \ref{thm:Class} still comes as a surprise.

Recall that Alekseevski\u{i} and Kimel\!\'{}\!fel\!\'{}\!d proved in \cite{AK} that any Ricci flat complete homogeneous Riemannian  manifold $M^n$ must be flat, and in fact it is isometric to $\mbox{T}^k\times \mathbb{R}^{n-k}$, where $\mbox{T}^k$ is a flat torus. They proved this result by the consideration of a volume entropy. It can also be derived from Cheeger-Gromoll's splitting theorem and the consideration of Clifford translation \cite{Wolf}. We will abbreviate a statement deducing the flatness from the Ricci flatness as an {\em AK type theorem} in this paper. Since any Ricci flat Hermitian symmetric space must be flat, our Theorem \ref{thm:Class} has the following immediate consequence which can be viewed as a Hermitian analogue to the aforementioned AK theorem:

\begin{corollary}\label{coro:AK} Let $(M^n, g)$ be a complete Hermitian manifold with a CAS structure. Assume that the Chern curvature is Ricci flat. Then it is Chern flat.  In particular, $M$ is covered by a complex Lie group.
\end{corollary}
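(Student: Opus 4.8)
The plan is to deduce the corollary directly from Theorem \ref{thm:Class}. First I would pass to the universal cover $\widetilde M$, which is again complete and whose Chern curvature is still Ricci flat, since Ricci flatness is a local condition and $\widetilde M$ is locally isometric to $M$. By Theorem \ref{thm:Class}, $\widetilde M$ splits as a Hermitian product $M_1\times M_2$, where $M_1$ is Chern flat and carries a complex Lie group structure, while $M_2=\prod_{i} N_i$ is a product of irreducible Hermitian symmetric spaces. Because the splitting is a product of Hermitian manifolds, the metric determinant factorizes over the factors, so the first Chern Ricci form of $\widetilde M$ is the sum of the pullbacks of the Chern Ricci forms of $M_1$ and of the $N_i$; equivalently, the Chern curvature of $\widetilde M$ is block diagonal with respect to these factors.

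Next I would use that the Chern Ricci form of $M_1$ vanishes since $M_1$ is Chern flat, whence Ricci flatness of $\widetilde M$ forces the Chern Ricci form of each $N_i$ to vanish as well. But each $N_i$ is K\"ahler, so its Chern connection coincides with the Levi-Civita connection and its Chern Ricci form is the ordinary Ricci form; and as an irreducible Hermitian symmetric space, $N_i$ is K\"ahler--Einstein with a nonzero Einstein constant (positive in the compact case, negative in the noncompact case). Hence the Chern Ricci form of $N_i$ cannot vanish, a contradiction unless there are no factors $N_i$ at all. Therefore $M_2$ reduces to a point, $\widetilde M=M_1$ is Chern flat, and, since Chern flatness is a tensorial (local) condition that passes through the covering map, $M$ itself is Chern flat; moreover $M$ is covered by the complex Lie group $\widetilde M=M_1$.

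All of the substance here is contained in Theorem \ref{thm:Class}; the one point deserving attention --- rather than a genuine obstacle --- is that, in the normalization of Theorem \ref{thm:Class}, every flat piece, in particular any flat $\mathbb{C}^k$, is absorbed into the complex Lie group factor $M_1$ and does not appear inside $M_2$. This is precisely what guarantees that each $N_i$ is a genuinely irreducible Hermitian symmetric space, hence of compact or noncompact type and so with nonvanishing scalar curvature, which is exactly what the contradiction above requires.
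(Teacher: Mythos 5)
Your argument is correct and is essentially the paper's own proof: pass to the universal cover, apply Theorem \ref{thm:Class}, and observe that a non-flat irreducible Hermitian symmetric factor is K\"ahler--Einstein with nonzero Einstein constant, so Ricci flatness eliminates all such factors. The only cosmetic difference is in handling the flat Euclidean piece: the paper keeps a possible $\mathbb{C}^{n_1}$ factor inside $M_2$ and concludes $\widetilde{M}=M_1\times\mathbb{C}^{n_1}$ is still a complex Lie group, whereas you absorb it into $M_1$ by convention --- either normalization yields the same conclusion.
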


Note that one no longer has the Cheeger-Gromoll splitting theorem for Hermitian manifolds with Chern connection. Our proof to the above result is algebraic instead. The result answers in the case of CAS manifolds a question raised by Yau \cite[Problem 87]{Yau} where he asked if {\it  one can say something nontrivial about the compact Hermitian manifolds whose holonomy is a proper subgroup of $\mathsf{U}(n)$}, since the Ricci flatness result of the previous paper \cite{NZ-CAS} asserts that the holonomy of a CAS manifold is a proper subgroup of $\mathsf{SU}(n)$. The above two results are proved in \S 4 after some preliminaries in \S 2 and \S3.

Recall that for any given metric connection $D$ on a Riemannian manifold, its curvature tensor $R^D$ is a covariant $4$-tensor which is skew-symmetric with respect to its first two or last two positions. Its Ricci curvature $\Ric^D$ is the $2$-tensor obtained by contracting the first and fourth (or equivalently, its second and third) positions of $R^D$. When the manifold is Hermitian and $D$ is also a Hermitian connection, then there are three ways to contract its curvature tensor $R^D$, resulting in the {\em first, second,} and {\em third} Ricci of $D$, with $\Ric^D$ being the third Ricci. More precisely,  recall that the Ricci curvature tensor of a metric connection $D$ on a Riemannian manifold $(M,g)$ is defined by
\begin{equation} \label{eq:Ricci}
 \mbox{Ric}^D(x,y) = \sum_{i=1}^m R^D(\varepsilon_i, x, y, \varepsilon_i)
 \end{equation}
where $R^D(x,y,z,w)= \langle D_xD_yz-D_yD_xz-D_{[x,y]}z, w\rangle$ is the curvature of $D$,  $\{ \varepsilon_i \}$ is a local orthonormal frame of $M$, while $x, y, z, w$ are tangent vectors. In general $ \mbox{Ric}^D$ might not be symmetric as $D$ may have torsion. When $(M^n,g)$ is Hermitian, we may choose the orthonormal frame $\{ \varepsilon_1, \ldots \varepsilon_{2n}\}$ so that
$$ \sqrt{2}\varepsilon_i = e_i+\overline{e}_i, \ \ \ \sqrt{2}\varepsilon_{n+i} = \sqrt{\!-\!1}(e_i-\overline{e}_i) ; \ \ \ \ \  i=1, \ldots , n,  $$
where $\{ e_1, \ldots , e_n\}$ is a local unitary frame. Plugging into (\ref{eq:Ricci}), we get
\begin{equation} \label{eq:Ricci2}
 \mbox{Ric}^D(x,y) = \sum_{i=1}^n \{ R^D(e_i, x, y, \overline{e}_i) + R^D(\overline{e}_i, x, y, e_i)\}.
 \end{equation}
Now suppose $D$ is Hermitian, namely, $Dg=0$ and $DJ=0$. Then $R^D(\ast , \ast ,e_i, e_j)=0$. So for any type $(1,0)$ tangent vectors $X$, $Y$ we have
\begin{equation} \label{eq:Ricci3}
 \mbox{Ric}^D(X,Y) = \sum_{i=1}^n  R^D(e_i, X, Y, \overline{e}_i) , \ \ \ \mbox{Ric}^D(\overline{X},Y)= \sum_{i=1}^n  R^D(e_i, \overline{X}, Y, \overline{e}_i).
 \end{equation}
This is often called the {\em third Ricci curvature} of the Hermitian connection $ D$, also denoted as $\Ric^{D(3)}$, while the {\em first} and {\em second} Ricci are defined respectively by
$$ \mbox{Ric}^{D(1)}(x,y)=\sum_{i=1}^n R^D(x,y,e_i, \overline{e}_i), \ \ \  \ \ \  \mbox{Ric}^{D(2)}(X, \overline{Y})=\sum_{i=1}^n R^D(e_i, \overline{e}_i, X, \overline{Y}).$$

 For Chern connection the only non-vanishing part of $ \mbox{Ric}^{D(1)}(\cdot, \cdot)$ are $\Ric(X, \overline{Y})$ and their conjugates. For CAS manifolds, the Chern curvature tensor obeys all K\"ahler symmetries, namely CAS manifolds are {\em Chern K\"ahler-like} (cf. Lemma 3.1 of \cite{NZ-CAS}).  Hence all three Chern Ricci curvatures coincide. For general Hermitian manifolds, these three Ricci curvatures may not be equal even for the Chern connection.

\begin{remark} \label{remark14}
We remark that the naive way of extending AK theorem from Levi-Civita connection to Chern connection fails, namely, there are (locally) Hermitian homogeneous manifolds with vanishing (third) Chern Ricci curvature but are not Chern flat. (For instance, some (compact quotients of) {\em almost abelian} Lie groups will be (third) Chern Ricci flat but not Chern flat. There are also examples of compact locally homogeneous Hermitian manifolds with vanishing first and third Chern Ricci yet is not Chern flat. Such examples are given at the end of \S 4). From this, we see that the CAS assumption in Corollary \ref{coro:AK}, which is stronger than just assuming the manifold to be (locally) Hermitian homogeneous, is an appropriate one for an AK type theorem for the Chern connection.

We remark that AK type theorems were studied previously by \cite{GFS}, \cite{Podesta-R},  etc, for other connections.

\end{remark}

Besides the Chern connection $\nabla$, another important canonical connection on a Hermitian manifold $(M^n,g)$ is the {\em Bismut connection} $\nabla^b$, which is the unique Hermitian connection (meaning a metric connection which makes the almost complex structure $J$ parallel) whose torsion is totally skew-symmetric.  The Bismut connection is extensively studied by physicists in string theory, and is an essential component in the Hull-Strominger system which suggests that the hidden space is a non-K\"ahler Calabi-Yau manifold (cf. \cite{GHR}, \cite{Strominger}). In mathematics, Bismut \cite{Bismut} established the existence and uniqueness of this connection and used it in his study of local index theory. It is also related to the Hermitian Ricci flow theory (cf. \cite{Streets}, \cite{StreetsTian}, \cite{StreetsTian1} and references therein). It seems that the geometric and complex analytic meaning of Bismut curvature is less understood when compared with Chern curvature. Moreover, Bismut curvature does not seem to enjoy all the symmetries satisfied by the Chern curvature. There is an interesting geometric explanation for this in the frame work of generalized Ricci flow developed by Garcia-Fern\'andez and Streets \cite{GFS} (see also the references therein).

Analogous to the Chern connection case, we consider {\em Bismut Ambrose-Singer} manifolds (BAS for brevity), namely  Hermitian manifolds whose Bismut connection has parallel torsion and curvature. However, Theorem \ref{thm:Class} fails completely when the Chern connection is replaced by the Bismut connection since there are plenty of examples of BAS manifolds that are `non-trivial', in the sense that they are neither Hermitian symmetric nor Bismut flat. This can be seen by results in \cite{AndV, Ang-Ped}. By \cite{Ang-Ped}, the Vaisman sovmanifolds are BAS, while Theorem 4.7 of \cite{AndV} asserts that  Vaisman solvmanifolds have nontrivial holonomy with respect to the Bismut connection.    Below is  an example by straight forward calculations.

\begin{example} [{\bf Hopf manifold with $\textbf{n}\geq \textbf{3}$}]

Let $M^n=({\mathbb C}^n\setminus \{ 0\})/\langle f\rangle$, where  $z=(z_1, \ldots , z_n)$ and $f(z)=2z$. Let us write $|z|^2=|z_1|^2+\cdots + |z_n|^2$, $\omega = \sqrt{-1}\frac{\partial\overline{\partial}|z|^2}{|z|^2}$, and let $g$ be the Hermitian metric with K\"ahler form $\omega$. As is well-known, the locally conformally K\"ahler metric $g$ is not Bismut flat when $n\geq 3$, but is always Vaisman, meaning that its Lee form is parallel under the Levi-Civita connection $\nabla^g$. By  Corollary 3.8 of \cite{AndV} we know that the Bismut connection of $g$ has parallel torsion (BTP). In the appendix (\S 8), we give the explicit computation showing that the Bismut curvature of the Hopf manifold $(M^n,g)$ is also parallel,  and it is not Bismut flat (when $n\geq 3$), so it gives an example of BAS manifold which is neither Hermitian symmetric nor Bismut flat.
\end{example}

As a matter of fact, starting in dimension $3$, there are plenty of other examples of BAS manifolds that are not K\"ahler (hence not locally Hermitian symmetric) and not Bismut flat, and there are even examples of BAS manifolds that are balanced (but non-K\"ahler) (compare with Theorem \ref{prop1.7a} below).

Note that BAS manifolds form a subset of (locally) {\em naturally reductive homogeneous spaces,} which means homogeneous Riemannian manifolds admitting a metric connection with parallel (with respect to itself) and totally skew-symmetric torsion (cf. Chapter 6 of \cite{TV}). Such spaces were systematically studied by Agricola, Friedrich and others (see \cite{Agricola}, \cite{AgricolaF}, and the references therein). Classification of real dimensions up to $8$ were obtained (see \cite{AgricolaFF}, \cite{KV}, \cite{Storm} and the references therein). While it might be difficult  to classify naturally reductive spaces in general dimensions, it seems to be a plausible and  interesting problem to  classify or characterize all BAS manifolds. As a step towards this goal we ask whether or not the analogue to Corollary \ref{coro:AK}  holds for the Bismut connection, namely, for a compact BAS manifold with vanishing (third) Bismut Ricci, must it be Bismut flat? It is the case for $n=2$ and  we suspect that the answer would be negative when $n\geq 3$, although we have not been able to construct a concrete counterexample yet. On the positive side, we believe the following should be true:

\begin{conjecture} \label{conj1.6a}
Let $(M^n,g)$ be a compact (or complete) Hermitian manifold such that its Bismut connection $\nabla^b$ is Ambrose-Singer. If the first and third Bismut Ricci both vanish, then the Bismut curvature vanishes.
\end{conjecture}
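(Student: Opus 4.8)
\medskip
\noindent\emph{Towards Conjecture~\ref{conj1.6a}.} The plan is to run a Bismut analogue of the three‑step argument behind Theorem~\ref{thm:Class} and Corollary~\ref{coro:AK}, with the holonomy system enlarged to carry the totally skew torsion. First I would pass to the universal cover (still complete and now simply connected) and use Sekigawa's theorem \cite{Seki}, together with $\nabla^b T^b=0$ and $\nabla^b R^b=0$, to present $M$ locally as a naturally reductive homogeneous Hermitian space whose canonical connection is $\nabla^b$. Fixing $p\in M$, set $V=T^{1,0}_pM\cong\mathbb{C}^n$, let $\mathfrak h\subseteq\mathfrak u(n)$ be the Bismut holonomy algebra at $p$, and let $H$ be the (parallel) torsion $3$-form. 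By the Ambrose--Singer holonomy theorem and the parallelism of $R^b$, the algebra $\mathfrak h$ is exactly the span of the curvature operators $R^b(u,v)$; moreover $\mathfrak h$ annihilates $R^b$ and $H$, and, because $\nabla^b T^b=0$ and $\nabla^b R^b=0$, the two Bianchi identities degenerate to the purely algebraic relations
\[
\mathop{\mathfrak{S}}_{x,y,z}R^b(x,y)z=\mathop{\mathfrak{S}}_{x,y,z}T^b\big(T^b(x,y),z\big),
\qquad
\mathop{\mathfrak{S}}_{x,y,z}R^b\big(T^b(x,y),z\big)=0 .
\]
This packages the data into a ``Bismut holonomy system'' $(V,\mathfrak h,R^b,H)$, and the conjecture becomes the assertion that such a system with vanishing first and third Ricci is trivial, i.e.\ $R^b=0$.

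\medskip
Second, I would translate the two curvature hypotheses algebraically. Unlike the Chern curvature, $R^b$ may have a nonzero $(2,0)+(0,2)$ component in its first pair of arguments, so the first Bismut Ricci tensor is the full trace $\operatorname{tr}_{T^{1,0}}R^b(u,v)$ over all (complexified) tangent vectors $u,v$; hence \emph{vanishing of the first Bismut Ricci is equivalent to $\mathfrak h\subseteq\mathfrak{su}(n)$}. This is the decisive consequence of that hypothesis, the one that brings Berger/Simons‑type machinery into range, and it is also why one should not expect an Alekseevski\u{i}--Kimel\!\'{}\!fel\!\'{}\!d statement from the third Ricci alone (cf.\ Remark~\ref{remark14}). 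For the third Ricci I would compare $\nabla^b$ with the Levi--Civita connection, $\nabla^b=\nabla^{LC}+\tfrac12 H$: since $\nabla^b H=0$, the tensor $R^b-R^{LC}$ is entirely expressible through $H$, quadratically, and both the first and the third Bismut Ricci thereby acquire explicit corrections of the shape $\sum_{i,j}H_{\ast ij}H_{\ast ij}$. Keeping \emph{both} Ricci hypotheses is precisely what should make this quadratic term tractable.

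\medskip
Third, with the system in hand I would split off the flat factor via the $\mathfrak h$-module decomposition of $V$ (available even though $\nabla^b$ is not Levi--Civita), using the algebraic Bianchi relations above to check that $\mathfrak h$, the corresponding components of $R^b$, and $H$ all vanish on the trivial summand; this reduces the problem to the case in which $\mathfrak h$ has no trivial summand on $V$. On that piece the aim is to peel the K\"ahler‑symmetric part off $R^b$: one wants $\mathfrak h\subseteq\mathfrak{su}(n)$, parallelism, and the Bianchi identities to force that part to be a genuine $\mathfrak{su}(n)$-valued, K\"ahler‑symmetric, first‑ and third‑Ricci‑flat curvature operator, to which the algebraic Alekseevski\u{i}--Kimel\!\'{}\!fel\!\'{}\!d step underlying Corollary~\ref{coro:AK} (a Simons‑type argument, cf.\ \cite{Sim}) applies and yields its vanishing — leaving $R^b$ purely quadratic in $H$, with first and third Ricci then sign‑definite expressions in $H$ that must vanish, so $H=0$ and $R^b=0$ there. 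In the compact case this last step admits an integral alternative: $\nabla^b H=0$ makes $\|H\|$ constant, and the vanishing of all Bismut Ricci tensors together with a Gauduchon‑type identity relating the total Bismut, Chern and Riemannian scalar curvatures to $\int_M\|H\|^2$ should force $H\equiv0$.

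\medskip
The main obstacle is exactly this last algebraic point, and it is where the Bismut case genuinely diverges from the Chern case of Theorem~\ref{thm:Class}: because $R^b$ does not obey the K\"ahler symmetries, Simons' argument cannot be applied to $R^b$ directly, and everything rests on controlling the cross terms between the K\"ahler‑symmetric part of $R^b$ and the $3$-form $H$ inside the first and third Ricci contractions. If those cross terms can be shown to vanish — using $\mathfrak h\subseteq\mathfrak{su}(n)$, parallelism, and the algebraic Bianchi identities — and the residual quadratic term in $H$ can be shown to have a fixed sign, the conjecture follows; dropping the first Ricci hypothesis should reinstate the cross terms and, as the authors note, produce genuine counterexamples when $n\ge 3$.
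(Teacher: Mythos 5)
What you are proving here is a conjecture that the paper itself leaves open: the authors establish it only for $n\le 4$ (Theorem \ref{prop1.7a}, by an explicit computation in admissible frames) and under the additional Bismut K\"ahler-like hypothesis (Proposition \ref{prop1.8a}). Your outline correctly identifies the natural framework --- the generalized symmetric holonomy system of \S 5, the Nomizu Lie algebra $\mathfrak{J}=V\oplus\mathfrak{g}$, and the fact that vanishing first Bismut Ricci places the restricted holonomy inside $\mathsf{SU}(n)$ --- but it does not close the gap. Theorem \ref{thm:generlizedholo} requires the hypothesis $\Rm|_{\operatorname{image}(T)}=0$; for the Chern connection this is supplied by the structural fact $R|_{\mathcal W}=0$ from \S 2, but for the Bismut connection there is no analogue, and without it the key cancellation $B'([x,y],[x,y])=B'(\Rm_{x,y},\Rm_{x,y})$ breaks down because the cross term $B'(\Rm_{x,y},T(x,y))$ computed in Lemma \ref{lm:killing2} need not vanish. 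You name exactly this as ``the main obstacle'' and leave it unresolved, so what you have is a program, not a proof.

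Moreover, one of your fallback arguments is demonstrably wrong. In the compact case you propose that $\nabla^bH=0$ together with the vanishing of all Bismut Ricci curvatures, fed into a scalar-curvature identity, ``should force $H\equiv 0$.'' Compact Bismut-flat non-K\"ahler manifolds --- the quotients of Samelson spaces classified in \cite{WYZ}, e.g.\ isosceles Hopf surfaces or compact quotients of compact semisimple Lie groups with bi-invariant metrics --- have $R^b=0$, hence all Bismut Ricci curvatures vanish, yet their torsion $3$-form is parallel and nonzero. Any argument deducing $H\equiv 0$ from Ricci-flatness of $\nabla^b$ therefore proves too much: the conclusion of Conjecture \ref{conj1.6a} is $R^b=0$, not $H=0$, and for the Bismut connection these are genuinely different. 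The same examples show that the proposed decomposition strategy --- kill the K\"ahler-symmetric part of $R^b$ by a Simons-type argument and then kill a sign-definite residue quadratic in $H$ --- cannot succeed as stated: on a Samelson space the Levi-Civita contribution and the $H$-quadratic contribution to $R^b$ are each nonzero and cancel only in their sum, so no argument forcing them to vanish separately can be correct.
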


Note that  for BAS manifolds, the first and second Bismut Ricci curvatures are  always equal (for example by equation (1.2) of Theorem 1.1 in \cite{ZZ22}), while they are not necessarily equal to the third. Moreover similar to Chern connection, for a BAS manifold,  the only non-vanishing part of the first Bismut Ricci is $\Ric (X, \overline{Y})$ and its conjugate.   Recall also  that the compact Bismut flat manifolds have been fully classified \cite{WYZ} as quotients of  Samelson spaces. Hence the above conjecture, if confirmed, would provide a classification for Ricci flat compact BAS manifolds as an extension of Corollary \ref{coro:AK} to the Bismut connection.

Regarding to Conjecture \ref{conj1.6a}, we show in \S 6 that it holds up to complex dimension four:
\begin{theorem} \label{prop1.7a}
Let $(M^n,g)$ be a complete, non-K\"ahler BAS manifold with vanishing first and third Bismut Ricci. Then it is not balanced. Furthermore, it is Bismut flat   if $n\leq 4$.
\end{theorem}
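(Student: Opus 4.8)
The plan is to treat the two assertions in turn: first, that such a manifold cannot be balanced --- a dimension-free fact which then feeds the low-dimensional argument --- and second, for $n\le 4$, that it must be Bismut flat.

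\emph{Part 1: not balanced.} I would argue by contradiction: assume $(M^n,g)$ is balanced and deduce that $g$ is K\"ahler, contradicting the hypothesis. Write $H$ for the totally skew-symmetric torsion $3$-form of $\nabla^b$, so the BAS hypothesis reads $\nabla^b H=0$, $\nabla^b R^b=0$; recall that $H$ and the Chern torsion $T$ of $g$ carry the same data and that being balanced is equivalent to the vanishing of the Lee form $\theta$, i.e.\ of the torsion trace $\sum_k T^k_{kj}$. The engine is the first Bianchi identity for $\nabla^b$: since $\nabla^b H=0$, the cyclic sum $\mathfrak S_{X,Y,Z}R^b(X,Y)Z$ equals $\mathfrak S_{X,Y,Z}T^b(T^b(X,Y),Z)$, a tensor quadratic in $H$. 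Contracting this against a unitary frame in the two ways that produce $\Ric^{b(1)}$ and $\Ric^{b(3)}$, and using $\nabla^b g=0$, $\nabla^b J=0$, one obtains a pointwise identity schematically of the form
\[
\Ric^{b(1)}_{i\bar j}-\Ric^{b(3)}_{i\bar j}=Q(H)_{i\bar j}+L(\theta)_{i\bar j},
\]
where $Q(H)$ is a fixed quadratic expression in $H$ and $L(\theta)$ collects the terms involving the Lee form (compare equation (1.2) of \cite{ZZ22}, which already yields $\Ric^{b(1)}=\Ric^{b(2)}$ in the BAS case). When $\theta=0$ the term $L(\theta)$ vanishes and, after a further trace, $Q(H)$ becomes a positive multiple of $|H|^2$. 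Since $\Ric^{b(1)}=\Ric^{b(3)}=0$ by hypothesis, this forces $H\equiv0$, i.e.\ $g$ K\"ahler, a contradiction. It is precisely the relative sign between $Q(H)$ and $L(\theta)$ that makes the balanced case borderline: for $\theta\ne0$ the two terms can cancel, consistently with the fact that the known non-K\"ahler BAS examples (Hopf manifolds, Vaisman solvmanifolds) all carry a nontrivial Lee form.

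\emph{Part 2: Bismut flat for $n\le 4$.} We may now assume $g$ is not balanced, so $\theta\not\equiv0$; since $\theta$ is built from the $\nabla^b$-parallel tensor $H$, it too is $\nabla^b$-parallel, so $|\theta|$ is a nonzero constant and $\theta^{1,0}$ is a nowhere-vanishing $\nabla^b$-parallel $(1,0)$-form. I would then develop the Bismut analogue of the holonomy-system machinery of \S 2--\S 3: the holonomy algebra $\mathfrak{hol}^b\subset\mathfrak u(T_pM)$ fixes the tensors $H_p$, $R^b_p$ and annihilates $\theta^{1,0}_p$, and --- since BAS manifolds are locally naturally reductive with canonical connection $\nabla^b$ (Sekigawa's theorem \cite{Seki}; cf.\ Chapter 6 of \cite{TV}) --- one may argue on the associated holonomy system. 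First, adapting the splitting arguments of \cite[\S 3]{NZ-CAS} to the Bismut setting, I would split off the K\"ahler de Rham factors; on each such factor $\nabla^b$ coincides with the Levi-Civita connection, $\nabla^b R^b=0$ makes it locally Hermitian symmetric, and vanishing of the first Bismut (= first Chern) Ricci forces it to be flat. The surviving factor is a non-K\"ahler BAS manifold with $\Ric^{b(1)}=\Ric^{b(3)}=0$, and the problem reduces to the algebraic claim that its Bismut curvature vanishes --- the content of Conjecture \ref{conj1.6a} in general. The role of the hypothesis $n\le 4$ is that in real dimension $\le 8$ the possible holonomy representations, equivalently the compatible locally naturally reductive structures, form a short explicit list (cf.\ \cite{AgricolaFF}, \cite{KV}, \cite{Storm}); for each remaining entry one checks, using the parallel Lee direction above together with both Ricci-vanishing conditions, that $R^b\ne0$ is impossible, so $R^b\equiv0$.

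I expect the main obstacle to be the low-dimensional case analysis in Part 2. Unlike the Chern case, where Berger/Simons-type arguments give a clean AK dichotomy, no such dichotomy is available here in general (that is Conjecture \ref{conj1.6a}), so one must genuinely exploit the finiteness of the classification in real dimension $\le 8$ and handle the surviving possibilities one by one, crucially using that \emph{both} the first Bismut Ricci (the ``Chern-Ricci-like'' contraction) and the third (the metric Ricci) vanish: dropping either leaves genuine counterexamples, as Remark \ref{remark14} and the Hopf manifold example already show. A secondary but delicate point is pinning down the signs in the torsion identity of Part 1, since the argument hinges on $Q(H)$ being sign-definite once $\theta=0$.
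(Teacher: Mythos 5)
Your Part 1 is, in substance, the paper's own argument. The schematic identity $\Ric^{b(1)}-\Ric^{b(3)}=Q(H)+L(\theta)$ is realized concretely by contracting the BTP curvature identity (\ref{eq:6.4}) over $k=\ell$: the two quartic-in-$T$ terms not involving the torsion trace cancel, and one gets $\Ric^{b(1)}-\Ric^{b(3)}=B-(\phi+\phi^{\ast})$, where $\phi^j_i=\sum_r T^j_{ir}\overline{\eta}_r$ is linear in the Lee form. So under the two Ricci hypotheses $B=\phi+\phi^{\ast}$; if $g$ were balanced then $\phi=0$, hence $B=0$, hence $|T|^2=\operatorname{tr}B=0$ and $g$ would be K\"ahler. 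Your sign worry is resolved by this explicit computation, so Part 1 stands.

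Part 2 is where the proposal has a genuine gap. You replace the paper's argument by an appeal to the classification of naturally reductive spaces in real dimension $\leq 8$, and then write that ``for each remaining entry one checks \dots that $R^b\neq 0$ is impossible.'' That check \emph{is} the theorem in dimension $3$ and $4$; deferring it leaves nothing proved. Moreover the reduction to that check is itself nontrivial: the lists in \cite{AgricolaFF, KV, Storm} are lists of naturally reductive structures on Riemannian manifolds, and you would first have to determine which of them carry a compatible integrable complex structure $J$ with $\nabla^b J=0$ for which the canonical connection is the Bismut connection of the induced Hermitian metric, then impose non-balancedness and compute both the first and third Bismut Ricci in each case, and separately handle the reducible/product cases that the irreducible classifications exclude. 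None of this is sketched, and it is not clear it is shorter than a direct proof.

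For contrast, the paper's Part 2 is self-contained and uses Part 1 as input: since $g$ is non-balanced BTP, there is an \emph{admissible} unitary frame (\cite[Prop.~5.2]{ZZ22}) with $\eta$ lined up with $e_n$ and $T^j_{in}=a_i\delta_{ij}$, the $a_i$ being global constants because $\phi$ is $\nabla^b$-parallel. The identity $\rho=\sigma_B$ (Lemma \ref{lemma6.4}) gives $d\sigma_B=0$, hence the pointwise constraint $(b_i+b_k-b_j)T^j_{ik}=0$, and a short case analysis on the multiplicities of $\{a_1,\dots,a_{n-1}\}$ shows $\theta^b$ can be diagonalized in an admissible frame. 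Once $\theta^b$ is diagonal, the BTP symmetry $R^b_{i\bar jk\bar\ell}=R^b_{k\bar\ell i\bar j}$ reduces $R^b$ to a symmetric $(n-1)\times(n-1)$ matrix $S$ whose diagonal and row sums vanish by the two Ricci conditions; for $n\leq 4$ this forces $S=0$. If you want to salvage your route, you would need to actually carry out the classification check; otherwise I would redirect Part 2 along these lines.
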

Another supporting evidence to the conjecture can be obtained for {\em Bismut K\"ahler-like} metrics, namely Hermitian metrics whose Bismut curvature tensor obeys all K\"ahler symmetries. In this case all three Bismut Ricci curvatures coincide. The following is a weaker version of \cite[Theorem 3]{ZZ23}, and we give an alternative proof here as an application of our algebraic result on holonomy systems in \S 3 and \S 5:
\begin{proposition}\label{prop1.8a}
Let $(M^n,g)$ be a complete BAS manifold with vanishing first Bismut Ricci. Assume that it is Bismut K\"ahler-like. Then it must be Bismut flat.
\end{proposition}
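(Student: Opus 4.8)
The plan is to reduce the statement to the algebraic result on symmetric K\"ahler holonomy systems developed in \S 3 and \S 5, transplanted to the Bismut connection. First I would fix a point $p\in M$, set $V=T_pM^{1,0}\cong\mathbb C^n$, and use that, since $M$ is BAS, both $\nabla^b T=0$ and $\nabla^b R^b=0$ hold. From $\nabla^b R^b=0$ the restricted Bismut holonomy group $\mathrm{Hol}^0(\nabla^b)\subseteq\mathsf U(n)$ fixes the parallel curvature tensor $R^b_p$, so the Bismut holonomy Lie algebra $\mathfrak h\subseteq\mathfrak u(n)$ acts trivially on $R^b_p$ (as a derivation on the tensor); and, again using $\nabla^b R^b=0$, the Ambrose--Singer holonomy theorem shows that $\mathfrak h$ is spanned by the curvature operators $R^b_p(x,y)$, $x,y\in T_pM$. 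The Bismut K\"ahler-like hypothesis guarantees that $R^b_p$ obeys all the K\"ahler symmetries, in particular the first Bianchi identity, so $(V,\mathfrak h,R^b_p)$ is a \emph{symmetric K\"ahler holonomy system} in the sense of \S 3. Moreover, Bismut K\"ahler-likeness forces the three Bismut Ricci curvatures to agree, so the vanishing of the first Bismut Ricci makes this holonomy system Ricci flat.

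Next I would invoke the AK type statement for such holonomy systems: a symmetric, Ricci-flat K\"ahler holonomy system is flat. The mechanism is the usual one. Decompose the unitary $\mathfrak h$-module $V$ as $V=V_0\oplus V_1\oplus\cdots\oplus V_k$, with $V_0$ the fixed subspace and the $V_i$ ($i\ge 1$) nontrivial irreducible submodules; since $\mathfrak h$ commutes with $J$, each $V_i$ is a complex, $\mathfrak h$-invariant subspace. Using the symmetry of the system and the K\"ahler first Bianchi identity one checks, exactly as for symmetric spaces, that $R^b_p(x,y)=0$ whenever $x$ and $y$ lie in different summands, so $R^b_p=\bigoplus_{i\ge 1}R_i$, with each $R_i$ the curvature of an irreducible symmetric K\"ahler holonomy system on $V_i$, i.e.\ the infinitesimal data of an irreducible Hermitian symmetric space. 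By Cartan's classification every such space is Einstein with a \emph{nonzero} Einstein constant (it is of compact or of noncompact type), so $\mathrm{Ric}(R_i)$ is a nonzero multiple of the metric on $V_i$; since $\mathrm{Ric}(R^b_p)=\bigoplus_i\mathrm{Ric}(R_i)$, Ricci flatness forces $k=0$, hence $\mathfrak h=0$ and $R^b_p=0$. As $p$ is arbitrary, $M$ is Bismut flat.

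I expect the genuinely new input to sit in \S 3 and \S 5, where the algebra of symmetric K\"ahler holonomy systems is set up and the AK type vanishing is established; within the present reduction the only delicate points are checking that the extracted holonomy system is simultaneously symmetric and K\"ahler, and that Ricci flatness passes to the irreducible summands. It is worth emphasizing that, although $\nabla^b$ carries a nonzero (but parallel) torsion, the torsion never enters the curvature analysis: the parallel-torsion part $\nabla^b T=0$ plays no role once $R^b$ is K\"ahler-like, and the K\"ahler-like hypothesis is precisely what turns $R^b$ into a genuine K\"ahler algebraic curvature tensor annihilated by its own holonomy algebra, which puts us back inside the classical Simons framework for Berger's holonomy theorem, so the Hermitian symmetric space classification applies without change. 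This is the Bismut analogue of the Chern statement in Corollary \ref{coro:AK}, and the K\"ahler-likeness assumption is exactly what is needed to make that argument go through here.
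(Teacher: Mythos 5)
Your proposal is correct and follows essentially the same route as the paper's proof: the BKL hypothesis supplies the first Bianchi identity (hence a symmetric holonomy system in the sense of \S 3) and the equality of the three Bismut Ricci curvatures, the block-diagonal, K\"ahler-symmetric structure of $\Theta^b$ localizes the Ricci-flatness to each irreducible holonomy summand, and the irreducible symmetric Ricci-flat system is then shown to be flat. The only cosmetic difference is that the paper settles the irreducible case by the direct Killing-form computation of Proposition \ref{prop:32} rather than by appealing to Cartan's classification of irreducible Hermitian symmetric spaces being Einstein with nonzero constant; both close the argument in the same way.
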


Besides Chern and Bismut connections, the family of Gauduchon connections, which is the line joining Chern and Bismut, have also been studied in the literature. For any real number $t$, the {\em $t$-Gauduchon connection} is defined by $\nabla^{(t)}=(1-\frac{t}{2})\nabla + \frac{t}{2}\nabla^b$. So in particular, $\nabla^{(0)}=\nabla$ is the Chern connection, $\nabla^{(2)}=\nabla^b$ is the Bismut connection, while $\nabla^{(1)}$ is the Hermitian projection of the Levi-Civita connection, which is called the {\em Lichnerowicz connection} in some literature. Note that when the metric $g$ is K\"ahler, all  $\nabla^{(t)}$ coincide with the Levi-Civita connection $\nabla^g$. But when $g$ is not K\"ahler, $\nabla^{(t)}\neq \nabla^{(t')}$ whenever $t\neq t'$, and none of them can be equal to $\nabla^g$ as the latter is not a Hermitian connection in this case.  Lafuente and Stanfield \cite{LS} proved the following beautiful theorem about Gauduchon flat manifolds:

\begin{theorem}[Lafuente and Stanfield \cite{LS}]
For any $t\neq 0,2$, if a compact Hermitian manifold has flat $t$-Gauduchon connection $\nabla^{(t)}$, then it must be K\"ahler.
\end{theorem}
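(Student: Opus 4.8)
The plan is to use that a flat Hermitian connection trivializes on the universal cover, to convert the flatness of $\nabla^{(t)}$ into an overdetermined first-order system for the Chern torsion, and to finish with a global integration argument on the compact manifold — the key point being that a Hermitian metric is K\"ahler precisely when its Chern torsion vanishes. First, since $\nabla^{(t)}$ is a metric connection with $\nabla^{(t)}J=0$ and vanishing curvature, its restricted holonomy is trivial; pulling back to the universal cover $\widetilde M$ one obtains a global $\nabla^{(t)}$-parallel unitary frame $\{e_i\}$ with coframe $\{\varphi^i\}$, in which the connection matrix $\theta^{(t)}$ of $\nabla^{(t)}$ vanishes and $\omega=\tfrac{\sqrt{-1}}{2}\sum_i\varphi^i\wedge\overline{\varphi^i}$. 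In a unitary frame the difference tensor $\nabla^{(t)}-\nabla=\tfrac{t}{2}(\nabla^b-\nabla)$ has a matrix which is explicitly linear in the Chern torsion components $T^i_{jk}$, of the form $\tfrac{t}{2}\sum_k\big(\overline{T^j_{ik}}\,\varphi^k-T^i_{jk}\,\overline{\varphi^k}\big)$ (up to the usual index and sign conventions); hence $\theta^{(t)}=0$ forces the Chern connection matrix itself to be read off from the torsion, $\theta^i_j=-\tfrac{t}{2}\sum_k\big(\overline{T^j_{ik}}\,\varphi^k-T^i_{jk}\,\overline{\varphi^k}\big)$. This is exactly where the hypothesis $t\neq0$ enters: at $t=0$ one only learns that the Chern connection is flat, which is strictly weaker and admits the non-K\"ahler complex-parallelizable examples.

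Next I would feed this expression for $\theta$ into the first Chern structure equation and into $\Theta=d\theta-\theta\wedge\theta$, and bookkeep by bidegree, using that the Chern torsion is of pure type $(2,0)$ while the Chern curvature is of pure type $(1,1)$. Writing $\nabla^{(t)}=\nabla+\tfrac{t}{2}\Gamma$ with $\Gamma=\nabla^b-\nabla$ (linear in $T$ and $\overline T$), the flatness reads $0=R^{(t)}=\Theta+\tfrac{t}{2}d^{\nabla}\Gamma+\tfrac{t^2}{4}\Gamma\wedge\Gamma$; its $(1,1)$-part expresses $\Theta$ through the torsion, while its $(2,0)$- and $(0,2)$-parts give $\big(\tfrac{t}{2}d^{\nabla}\Gamma+\tfrac{t^2}{4}\Gamma\wedge\Gamma\big)^{(2,0)}=0$. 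Together with $d^2\varphi^i=0$ and the Chern Bianchi identities, this yields a closed first-order system for the torsion, schematically $\nabla^{(t)}T=Q_t(T,\overline T)$ for a universal $\mathbb{R}$-bilinear expression $Q_t$ with $t$-dependent coefficients, supplemented by pointwise quadratic identities satisfied by $T$. By the characterization of the K\"ahler condition it then suffices to prove $T\equiv0$ on $\widetilde M$.

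The remaining step, which I expect to be the main obstacle, is genuinely global: the relation $\nabla^{(t)}T=Q_t(T,\overline T)$ alone does not force $T=0$, and here compactness of $M$ must be used. A natural route is two-stage. First, from the trace identity $\operatorname{tr}\theta^{(t)}=0$ one extracts that a distinguished torsion $1$-form $\beta$ — essentially the Lee form, which vanishes iff $g$ is balanced — is $\partial$-closed (and $\overline\beta$ is $\overline\partial$-closed); a standard integration by parts on the compact manifold, after replacing $g$ by a Gauduchon metric in its conformal class, then forces $\beta\equiv0$, so $g$ is balanced and the trace of $T$ drops out of $Q_t$. Second, for the remaining (trace-free) torsion one would establish a Weitzenb\"ock/Bochner-type identity $\Delta u=\big|\nabla^{(t)}T\big|^2+(\text{terms controlled by }Q_t)$ with $u$ built from $|T|^2$, whose zeroth-order coefficient degenerates precisely at $t=0$ and $t=2$; integrating over the compact $M$ then yields $\nabla^{(t)}T=0$, hence $T=0$, so $g$ is K\"ahler. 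The crux is isolating the correct Bochner identity (or the correct combination of integrated structure-equation identities) and verifying that the only degenerate parameters are $t=0$ and $t=2$ — which matches the known compact non-K\"ahler examples of flat-Chern type (quotients of complex Lie groups) and flat-Bismut type (Samelson spaces). Everything up to the torsion system is lengthy but routine manipulation with connection and structure equations; the global step carries the content, and compactness cannot be omitted.
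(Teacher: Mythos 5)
Your proposal does not close the argument. Note first that the paper itself offers no proof of this statement: it is quoted verbatim from Lafuente--Stanfield \cite{LS}, so the only meaningful comparison is with their original argument. Your setup is sound and matches the standard starting point (and the computations in \S 7 of this paper): flatness of $\nabla^{(t)}$ gives a global $\nabla^{(t)}$-parallel unitary frame on the universal cover, in which $\theta^{(t)}=0$ forces $\theta=-\tfrac{t}{2}\gamma$, so the Chern connection form is expressed algebraically in the Chern torsion, and the structure equations plus Bianchi identities yield a closed first-order system $\nabla^{(t)}T=Q_t(T,\overline{T})$ together with pointwise quadratic constraints. The hypothesis $t\neq 0$ is correctly located at the step $\theta=-\tfrac{t}{2}\gamma$.

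The genuine gap is that the entire content of the theorem sits in the step you defer: producing an integrated (or pointwise) identity that forces $T\equiv 0$ precisely for $t\neq 0,2$. You write that ``the crux is isolating the correct Bochner identity \dots and verifying that the only degenerate parameters are $t=0$ and $t=2$,'' but you neither exhibit such an identity nor give any reason one exists with that exact degeneracy locus. This is not a routine verification: a generic quadratic form $Q_t$ appearing as the zeroth-order term of a Weitzenb\"ock formula would fail to have a sign on a whole interval of $t$, not just at two isolated values, and Lafuente--Stanfield's own complete non-compact version excludes the additional values $t\in\{\tfrac{2}{3},\tfrac{4}{5}\}$, which signals that the pointwise algebra degenerates elsewhere and that the compact case must be closed by a different global mechanism than a naive Bochner argument. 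Two secondary issues: (i) your balancedness step imports the computation of Proposition \ref{lemma6.9}, but that computation uses $\nabla^{(t)}T=0$ (the Ambrose--Singer hypothesis) to kill the divergence term $\sum_i\eta_{i,\bar i}$, and flatness of $\nabla^{(t)}$ does not give parallel torsion, so the term $\int_M\sum_i\eta_{i,\bar i}\,\omega^n$ must be handled by a separate identity; (ii) passing to a Gauduchon representative in the conformal class destroys the flatness hypothesis, since $\nabla^{(t)}$ is not conformally invariant, so that device cannot be used as stated. As it stands the proposal is a correct reduction plus a conjecture, not a proof.
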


In fact what they proved is  stronger: the result holds when the flatness is replaced by the K\"ahler-like condition, namely, when the curvature of $\nabla^{(t)}$ obeys all K\"ahler symmetries (see for example \cite{ZZ-GKL} and references therein for more discussion on $t$-Gauduchon K\"ahler-like manifolds). Lafuente and Stanfield also showed that the metric must be K\"ahler when the compactness assumption is dropped but when $t$ is not in  $\{ \frac{2}{3}, \frac{4}{5} \}$.

For  any $t\neq 0,2$, similar to the Chern or Bismut case, one could ask the question about $t$-GAS manifolds, namely those with $\nabla^{(t)}$ being Ambrose-Singer. It turns out that  there are examples of compact $t$-GAS manifolds that are `non-trivial', in the sense that they are non-K\"ahler (thus can be neither Hermitian symmetric nor $t$-Gauduchon flat). For instance, take any compact quotient of a simple complex Lie group, equipped with the standard metric coming from the Cartan-Killing form (see \S 7 for more details), then it is $t$-GAS for any $t\in {\mathbb R}$. It seems that $t$-GAS manifolds form a highly restrictive class, so a full classification of them constitutes an interesting project for a future study.

As a reduction one could also ask the AK type question for $t$-GAS manifolds when $t\neq 0,2$, namely, {\em if $\nabla^{(t)}$ has vanishing (third) Ricci curvature, then must $\nabla^{(t)}$ be flat?} We suspect that the answer would be a YES. In this direction we  prove in \S 7 the following

\begin{theorem} \label{prop1.10}
Let $t\neq 0,2$ and $(M^n,g)$ be a compact Hermitian manifold such that its $t$-Gauduchon connection $\nabla^{(t)}$ is Ambrose-Singer.  Then $g$ must be balanced. Furthermore,  if $\nabla^{(t)}$  has vanishing first and third Ricci curvatures, then $\nabla^{(t)}$ must be K\"ahler and flat, hence a (finite undercover of a) complex torus.
\end{theorem}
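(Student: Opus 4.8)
The plan is to prove the two assertions in turn: first balancedness, using \emph{only} the parallelism of the torsion, and then, assuming in addition the Ricci vanishing, to force the torsion of $\nabla^{(t)}$ to be zero and feed the result into Corollary~\ref{coro:AK}. For the first part I would isolate the following general fact. If $D$ is \emph{any} metric connection on a compact Riemannian manifold whose torsion $\tau$ is $D$-parallel, then the $1$-form $c:=\operatorname{tr}\tau$ obtained by contracting the first and third slots of $\tau$ (viewed as a $(0,3)$-tensor via $g$) vanishes identically. Indeed, writing $\nabla^g=D-A$ for the difference (potential) tensor, the Koszul-type formula
$$2\langle A_XY,Z\rangle=\langle\tau(X,Y),Z\rangle-\langle\tau(Y,Z),X\rangle+\langle\tau(Z,X),Y\rangle$$
gives, for any vector field $V$,
$$\operatorname{div}^g V=\operatorname{div}^D V-\langle c,V\rangle .$$
Since $g$ and $\tau$ are $D$-parallel, so is $c$; taking $V$ to be the vector field metrically dual to $\overline{c}$ makes $\operatorname{div}^D V=0$, so $\operatorname{div}^g V=-|c|^2$, and integration over $M$ forces $c\equiv 0$. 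Applying this to $D=\nabla^{(t)}$, whose torsion is $\tau^{(t)}=(1-\tfrac t2)T^\nabla+\tfrac t2 T^b$, and using that $T^b$ is totally skew (so its $(1,3)$-contraction is $0$) while $\operatorname{tr}T^\nabla$ is a nonzero multiple of the Lee form $\theta$ of $g$, we obtain $(1-\tfrac t2)\theta=0$. As $t\neq 2$ this means $\theta=0$, i.e.\ $g$ is balanced. (For $t=2$ this argument is vacuous, consistent with the failure of balancedness for Bismut, e.g.\ Hopf manifolds; for $t=0$ it recovers a feature of Theorem~\ref{thm:Class}.)

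For the ``furthermore'' part, assume now $\operatorname{Ric}^{(t),1}=\operatorname{Ric}^{(t),3}=0$; the goal is to show $\tau^{(t)}\equiv 0$. Once this is achieved we are done: a torsion-free Hermitian connection is the Levi-Civita connection, so $\nabla^g J=\nabla^{(t)}J=0$ and $g$ is K\"ahler; then the Chern connection, all Gauduchon connections, and $\nabla^g$ coincide, the manifold carries a CAS structure with vanishing (Chern $=\nabla^{(t)}$) Ricci, and Corollary~\ref{coro:AK} yields Chern flatness, which for a K\"ahler metric is ordinary flatness. A compact flat K\"ahler manifold is, by Bieberbach, finitely covered by a flat torus carrying the standard parallel complex structure, i.e.\ by a complex torus, as asserted.

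To produce $\tau^{(t)}=0$ I would derive, on the now balanced compact manifold, an integral Bochner/Stokes identity of the shape
$$0=\int_M\operatorname{div}^g W\,dV_g=\int_M\Big(a\,\langle\operatorname{Ric}^{(t),1},\omega\rangle+b\,\langle\operatorname{Ric}^{(t),3},\omega\rangle+c_t\,|\tau^{(t)}|^2\Big)\,dV_g ,$$
where $W$ is a vector field built from $\tau^{(t)}$ (and possibly $R^{(t)}$) and $c_t\neq 0$ for every $t\neq 0,2$. Here the parallelism of $\tau^{(t)}$ and of $R^{(t)}$ is used twice: to replace every $\nabla^{(t)}$-divergence that arises when manipulating $W$ by zero, exactly as in the first part, and to rewrite the Bianchi-type contractions of $R^{(t)}$ purely in terms of $\tau^{(t)}$ and the two Ricci tensors; balancedness is used to discard the Lee-form (trace-of-torsion) contributions, which is precisely why one wants $\theta=0$ in hand first. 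With the two Ricci terms annihilated by hypothesis, the identity collapses to $\int_M|\tau^{(t)}|^2\,dV_g=0$, hence $\tau^{(t)}\equiv 0$.

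The first part is essentially a clean application of the divergence theorem and should go through with only bookkeeping; the real work is the integral identity of the third paragraph. There one must compute the decomposition of $\operatorname{Ric}^{(t),1}$ and $\operatorname{Ric}^{(t),3}$ into Chern-curvature data plus quadratic torsion terms, keep track of the divergence corrections coming from passing between $\nabla^{(t)}$ and $\nabla^g$, and --- most delicately --- verify that after imposing balancedness the coefficient $c_t$ of $|\tau^{(t)}|^2$ is genuinely nonzero for all $t\neq 0,2$ (the computations underlying the Lafuente--Stanfield theorem on $t$-Gauduchon K\"ahler-like metrics, combined with the trick above for divergences of parallel tensors, should provide the template; for $t=0$ the conclusion follows directly from Corollary~\ref{coro:AK} once one is K\"ahler, which is the other reason that value is excluded). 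A minor secondary point is to confirm that nothing extra is needed in the final reduction: the fact that a compact K\"ahler manifold with Chern-flat (hence flat) metric is finitely covered by a complex torus is classical, and the hypotheses of Ricci-flatness and Ambrose--Singer are funneled entirely through Corollary~\ref{coro:AK}.
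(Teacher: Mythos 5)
Your first part is correct and takes a genuinely different route from the paper. The paper proves balancedness by choosing a unitary frame with $\theta^{(t)}|_p=0$ and computing $\partial\overline{\partial}(\omega^{n-1})=\frac{\sqrt{-1}}{n}\{\sum_i\eta_{i,\bar{i}}+(\frac{t}{2}-1)|\eta|^2\}\omega^n$, then integrating. Your argument packages the same integration by parts into a general statement about an arbitrary metric connection with parallel torsion on a compact manifold, which is cleaner and makes transparent why only $t=2$ must be excluded: the $(1,3)$-trace of $\tau^{(t)}=(1-\frac{t}{2})T+\frac{t}{2}T^b$ is $(1-\frac{t}{2})$ times a nonzero multiple of the Lee form because $T^b$ is totally skew. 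Up to a sign in your divergence identity (immaterial, since you end with $\int_M|c|^2=0$ either way) and the cosmetic slip of dualizing ``$\overline{c}$'' rather than the real form $c$, this part goes through.

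The second part has a genuine gap. The whole content of the ``furthermore'' statement is the vanishing of the torsion, and you reduce it to an integral identity $0=\int_M\big(a\,\langle\Ric^{(t)(1)},\omega\rangle+b\,\langle\Ric^{(t)(3)},\omega\rangle+c_t\,|\tau^{(t)}|^2\big)$ that you neither derive nor verify has $c_t\neq 0$; as you yourself concede, that verification is ``the real work,'' and without it nothing is proved --- the assertion that such an identity exists with $c_t\neq 0$ for all $t\neq 0,2$ is essentially equivalent to the theorem. Moreover, no global Bochner argument is needed here: since both the torsion and the curvature of $\nabla^{(t)}$ are parallel, the conclusion is pointwise. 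The paper's Lemma 7.2 records two identities, (\ref{eq:6.11}) and (\ref{eq:6.12}), comparing $R^{(t)}$ with the Chern curvature $R$ modulo quadratic torsion terms and covariant derivatives $T^{\ell}_{ik,\bar{j}}$ (which vanish by the AS hypothesis). Contracting (\ref{eq:6.12}) in $k=\ell$ and in $i=\ell$, and using $\eta=0$ from the first part, gives $\Ric^{(1)}=0$ and $\Ric^{(3)}=\frac{t^2}{4}B$; contracting the Bianchi-type identity (\ref{eq:6.11}) in $k=\ell$ gives $\Ric^{(1)}=\Ric^{(3)}$. Hence $\frac{t^2}{4}B=0$, and since $t\neq 0$ and $\operatorname{tr}(B)=|T|^2$, the torsion vanishes and $g$ is K\"ahler. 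Your final reduction (a torsion-free Hermitian connection is Levi--Civita, then Corollary \ref{coro:AK} and Bieberbach) is fine. You should replace the third paragraph of your proposal by this pointwise contraction argument, or else actually carry out the computation of $c_t$; as written, the key step is missing.
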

In fact the result holds under weaker assumption that the first and third scalar curvatures vanish (cf. Corollary \ref{coro:71}). In particular, together with Corollary \ref{coro:AK}, the above implies that the AK type theorem can be established for Gauduchon family as long as $t\neq  2$ if the Ricci flatness is assumed for the first and third Ricci. Hence {\it a negative answer to  Conjecture \ref{conj1.6a}  would make the Bismut connection a special exception.}

Note that in the aforementioned AK type question for $t$-GAS manifolds, one can not replace the flatness assumption on the first and third Ricci by the first Ricci alone, as there are examples of non-K\"ahler $t$-GAS manifolds which has vanishing first $t$-Gauduchon Ricci, as pointed out by  Lafuente and Stanfield in \cite[page 2]{LS} (see also \cite{Vez}), on a balanced Hermitian nimanifold $M$ with vanishing first Chern Ricci form. On such a manifold,  $\nabla^{(t)}$ has vanishing first Ricci for all $t$, since the difference of the first Ricci forms between two Gauduchon connections is given by the differential of Gauduchon's torsion $1$-form $\eta$, while the balanced condition implie thats $\eta=0$. Hence this example supplies a counter example to the above theorem if one drops the assumption  of the third Ricci curvature being  vanishing.

\vspace{0.3cm}

\section{Preliminaries}


In this section we will collect some results from \cite{NZ-CAS} which will be used in the proof of our main result, Theorem \ref{thm:Class} stated in the introduction.

Let $(M^n,g)$ be a complete Hermitian manifold. Denote by $\nabla$ the Chern connection, by $T$ and $R$ its torsion and curvature. The following was proved in \cite{NZ-CAS} (cf. also \cite{YZ}):

\begin{proposition}\label{prop:21} Suppose that $(M^n,g)$ is a Hermitian manifold with a CAS structure. Then the Chern curvature  is  K\"ahler-like. In particular, the Chern curvature satisfies the first Bianchi identity.
\end{proposition}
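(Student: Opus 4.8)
The plan is to obtain all the K\"ahler symmetries of $R$ purely algebraically, by substituting the CAS relation $\nabla T=0$ into the first Bianchi identity for a connection with torsion and then reading off what the type decomposition forces. (Only $\nabla T=0$ is needed for this proposition; the companion relation $\nabla R=0$ does not enter here, although it is used in the later parts of the argument.)

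First I would recall the structural properties of the Chern connection $\nabla$ of $(M^n,g)$: it is Hermitian, i.e.\ $\nabla g=0$ and $\nabla J=0$, so $\nabla$ preserves the decomposition $TM\otimes\mathbb C=T^{1,0}M\oplus T^{0,1}M$; its torsion $T$ is of type $(2,0)$, meaning $T(X,\overline Y)=0$ and $T(X,Y)\in T^{1,0}M$ whenever $X,Y$ are of type $(1,0)$; and its curvature $R$, being the curvature of a holomorphic connection on $T^{1,0}M$, is of type $(1,1)$, so the endomorphism $R(X,Y)$ vanishes when $X,Y$ are both of type $(1,0)$, and likewise when both are of type $(0,1)$. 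Together with the skew-Hermitian symmetry $R(x,y,z,w)=-R(x,y,w,z)$ of a metric connection and with reality, this shows that, up to these automatic symmetries and conjugation, the only possibly nonzero components of $R$ are the $R(X,\overline Y,Z,\overline W)$ with $X,Y,Z,W$ of type $(1,0)$, and that being ``K\"ahler-like'' amounts precisely to the extra symmetry $R(X,\overline Y,Z,\overline W)=R(Z,\overline Y,X,\overline W)$.

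Next I would invoke the first Bianchi identity for an arbitrary connection with torsion,
\[
\mathfrak S_{x,y,z}\,R(x,y)z \;=\;\mathfrak S_{x,y,z}\Big((\nabla_xT)(y,z)+T(T(x,y),z)\Big),
\]
where $\mathfrak S_{x,y,z}$ is the cyclic sum over $x,y,z$. Since $\nabla T=0$, the $(\nabla T)$ term disappears, leaving $\mathfrak S_{x,y,z}R(x,y)z=\mathfrak S_{x,y,z}T(T(x,y),z)$. Specializing to $x=X$, $y=Z$ of type $(1,0)$ and $z=\overline W$ of type $(0,1)$: on the left, $R(X,Z)=0$ because $R$ has no $(2,0)$ part, so the left side equals $R(Z,\overline W)X+R(\overline W,X)Z=R(Z,\overline W)X-R(X,\overline W)Z$; on the right, $T(X,Z)$ is of type $(1,0)$ so $T(T(X,Z),\overline W)=0$, while $T(Z,\overline W)=T(\overline W,X)=0$ because $T$ has no $(1,1)$ part, so the whole right side vanishes. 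Hence $R(Z,\overline W)X=R(X,\overline W)Z$ for all $X,Z$ of type $(1,0)$ and all $\overline W$ of type $(0,1)$; pairing with an arbitrary $\overline Y$ and relabeling yields exactly $R(X,\overline Y,Z,\overline W)=R(Z,\overline Y,X,\overline W)$.

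Finally I would note that this is all that is needed: for a type $(1,1)$ curvature tensor the first Bianchi identity $\mathfrak S_{x,y,z}R(x,y)z=0$ is equivalent to the symmetry just established (one checks the finitely many type cases), and the standard algebraic argument then promotes the first Bianchi identity together with the two skew-symmetries to the pair symmetry $R(x,y,z,w)=R(z,w,x,y)$, so that $R$ obeys all the symmetries of a K\"ahler curvature tensor. I do not anticipate a real obstacle here; the one point requiring care is the type bookkeeping --- in particular verifying that both the quadratic torsion term $T(T(\cdot,\cdot),\cdot)$ and the $(\nabla T)$ term drop out for the chosen types, the former because $T$ is of pure type $(2,0)$ and the latter because of the CAS hypothesis.
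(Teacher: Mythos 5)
Your proof is correct and is essentially the argument behind the cited result: the paper itself only quotes \cite{NZ-CAS} (Lemma 3.1 there, cf.\ also \cite{YZ}), where the identity $T^{\ell}_{ik,\bar j}+R_{i\bar jk\bar\ell}-R_{k\bar ji\bar\ell}=0$ is derived from the structure equations --- this is exactly the $(2,1)$-component of the torsion-modified first Bianchi identity you use, so $\nabla T=0$ yields the symmetry $R_{i\bar jk\bar\ell}=R_{k\bar ji\bar\ell}$ in the same way. Your invariant formulation, including the type bookkeeping that kills the quadratic torsion term, is a faithful coordinate-free version of that computation.
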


In \cite{NZ-CAS}, we denoted by $\mathcal{W}_p$  the subspace of $T^{1, 0}_p$ spanned by the image of $T$:
 $$\mathcal{W}:= \langle \{ T(X,Y)\, | \,  X, Y \in T^{1,0}M\}\rangle.$$ Let  $ \mathcal{N}:=\{ Z \in T^{1, 0}M \mid \langle T(X,Y), \overline{Z} \rangle =0 \ \ \forall \ X, Y \in T^{1,0}M\}.$ The following was proved in \cite{NZ-CAS}.

 \begin{proposition} The subbundles $\mathcal{W}$ and $\mathcal{N}$ of $T^{1, 0}M$ (also denoted as $T'M$) are invariant under the parallel transport with respect to the Chern connection $\nabla$, and $T'M=\mathcal{W}\oplus \mathcal{N}$ orthogonally. The curvature restriction $R|_{\mathcal{W}}=0$. In particular, when $M$ is not a locally Hermitian symmetric space, the action of the holonomy group $G$ on $T'M$ is reducible.
 \end{proposition}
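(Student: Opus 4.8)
The plan is to derive the three assertions from the hypotheses $\nabla T=0$, $\nabla R=0$ (the CAS condition) and $\nabla g=0$ (the Chern connection is metric), supplemented, for the statement $R|_{\mathcal W}=0$, by the K\"ahler-like symmetry of the Chern curvature from Proposition~\ref{prop:21}; we will also use throughout that the Chern torsion is of pure type $(2,0)$, so that $T(X,\overline Y)=0$ whenever $X,Y$ are of type $(1,0)$.

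\emph{Step 1: the decomposition $\mathcal W\oplus\mathcal N$ is $\nabla$-parallel.} Since $\nabla T=0$, the torsion is invariant under parallel transport: for a piecewise-smooth path $\gamma$ from $p$ to $q$, the Chern parallel transport $\tau_\gamma\colon T'_pM\to T'_qM$ satisfies $\tau_\gamma\big(T(X,Y)\big)=T(\tau_\gamma X,\tau_\gamma Y)$ for all $X,Y\in T'_pM$. As $\tau_\gamma$ is a linear isomorphism, it carries $\mathcal W_p=\langle\{T(X,Y)\}\rangle$ \emph{onto} $\mathcal W_q$; in particular $\dim_{\mathbb C}\mathcal W_p$ is independent of $p$, so $\mathcal W$ is a smooth, $\nabla$-parallel subbundle of $T'M$. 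Because $\nabla g=0$, every $\tau_\gamma$ is an isometry, so $\mathcal W^\perp$ is $\nabla$-parallel as well; and directly from the definitions, $Z\in\mathcal N$ iff $\langle T(X,Y),\overline Z\rangle=0$ for all $X,Y$, i.e.\ iff $Z\perp\mathcal W$. Hence $\mathcal N=\mathcal W^\perp$ and $T'M=\mathcal W\oplus\mathcal N$ is an orthogonal splitting into $\nabla$-parallel subbundles.

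\emph{Step 2: $R|_{\mathcal W}=0$.} I would apply the second Bianchi identity for a connection with torsion,
$$\mathfrak S_{X,Y,Z}\Big\{(\nabla_XR)(Y,Z)+R\big(T(X,Y),Z\big)\Big\}=0,$$
$\mathfrak S_{X,Y,Z}$ being the cyclic sum over $X,Y,Z$. By $\nabla R=0$ the first three terms drop out. Evaluating the identity at $(1,0)$-vectors $X,Y$ and at $Z=\overline W$ of type $(0,1)$, the type-$(2,0)$ property of $T$ kills the two terms containing $T(Y,\overline W)$ and $T(\overline W,X)$, leaving $R\big(T(X,Y),\overline W\big)=0$ as an endomorphism of $T'M$. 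Since $\mathcal W_p$ is spanned by the $T(X,Y)$'s, this gives $R(W',\overline W)=0$ for every $W'\in\mathcal W$ and every $(0,1)$-vector $\overline W$. Finally, for arbitrary $X,Y$ of type $(1,0)$ and $W_1,W_2\in\mathcal W$, the K\"ahler-like symmetry of Proposition~\ref{prop:21} yields
$$\langle R(X,\overline Y)W_1,\overline{W_2}\rangle=R(X,\overline Y,W_1,\overline{W_2})=R(W_1,\overline Y,X,\overline{W_2})=\langle R(W_1,\overline Y)X,\overline{W_2}\rangle=0 .$$
As $R(X,\overline Y)$ preserves the parallel subbundle $\mathcal W$ and the Hermitian inner product is positive definite on $\mathcal W$, we conclude $R(X,\overline Y)W_1=0$, i.e.\ the curvature of the holomorphic Hermitian subbundle $\mathcal W$ vanishes.

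\emph{Step 3: reducibility, and the main difficulty.} The holonomy group preserves $\mathcal W$ and $\mathcal N$ by Step~1; if both are nonzero and proper, it acts reducibly on $T'M$. If $\mathcal W=0$ then $T\equiv0$, $g$ is K\"ahler, $\nabla$ is its Levi-Civita connection, and $\nabla R=0$ says $(M,g)$ is locally symmetric, hence locally Hermitian symmetric---contrary to hypothesis. If $\mathcal N=0$, i.e.\ $\mathcal W=T'M$, Step~2 gives $R\equiv0$, so $(M,g)$ is Chern flat; passing to the universal cover, the (restricted) holonomy is then trivial, which acts reducibly on $T'M$ once $n\geq2$, while $n\leq1$ cannot occur in this sub-case (it would force $\mathcal W=0$). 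I expect the real content to lie entirely in Step~2: the parallelism in Step~1 and the case analysis in Step~3 are routine, whereas Step~2 requires care in writing the torsionful second Bianchi identity and, crucially, in using the K\"ahler-like symmetry to upgrade ``$\mathcal W$ is killed by every $R(W',\overline W)$'' (a statement about one argument of $R$) to ``$\mathcal W$ lies in the kernel of every curvature operator $R(X,\overline Y)$'', which is the genuine flatness of the induced connection on $\mathcal W$.
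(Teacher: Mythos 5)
Your proof is correct, and all three steps go through as written. Note that the paper itself does not reprove this proposition but simply cites \cite{NZ-CAS}; your argument is essentially the coordinate-free counterpart of what is done there, where the same facts are extracted from the structure equations in a local unitary frame. The two key mechanisms are identical: $\nabla T=0$ and $\nabla g=0$ make $\mathcal{W}$ and $\mathcal{N}=\mathcal{W}^\perp$ parallel; the second Bianchi identity with $\nabla R=0$ and the pure $(2,0)$ type of the Chern torsion give $R(T(X,Y),\overline{W})=0$, i.e.\ $\mathcal{W}$ annihilates the curvature in its first slot; and the K\"ahler-like symmetry of Proposition \ref{prop:21} (itself a consequence of $\nabla T=0$ via the first Bianchi identity) transfers this to the third slot, yielding flatness of the induced connection on $\mathcal{W}$. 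You are also right to single out that last transfer as the genuine content, and your handling of the degenerate cases $\mathcal{W}=0$ (K\"ahler, hence locally Hermitian symmetric, excluded by hypothesis) and $\mathcal{N}=0$ (Chern flat, trivial restricted holonomy) in Step 3 is the intended reading of the final assertion.
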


The next result extends the construction, mainly Theorem 1.4 and its proof in Section 6 of \cite{NZ-CAS}. Note that there was a typo in the statement of \cite[Theorem 1.4]{NZ-CAS}, namely, the word ``with $k\geq 2$" at the end should be deleted. We will recap the proof here for the sake of completeness.

\begin{theorem} Let $(M^n, g)$ be a complete simply-connected Hermitian manifold with a CAS structure. Assume that it does not admit any K\"ahler de Rham factor. Then $\mathcal{N}$ decomposes into $\mathcal{N}_1\oplus \mathcal{N}_2$ with $R|_{\mathcal{W}+\mathcal{N}_2}=0$ and $\mathcal{N}_1$ decomposes further into $\oplus_{j=1}^k \mathcal{K}_j$, with each of $\mathcal{K}_j$ being invariant and irreducible under the action of the holonomy group $G$. On each $\mathcal{K}_j$ there exists a parallel $(2,0)$-symplectic form which can be identified with the standard holomorphic symplectic $(2,0)$ form on $\mathbb{C}^{2k_j}$ with $2k_j=\dim_{\mathbb{C}}(\mathcal{K}_j)$.
\end{theorem}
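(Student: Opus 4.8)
The plan is to combine the structure results already recalled in this section with the Ambrose--Singer (CAS) hypothesis, exploiting in an essential way that $\nabla T = 0$, $\nabla R = 0$, and that the Chern curvature is K\"ahler-like (Proposition~\ref{prop:21}). First I would recall that $T'M = \mathcal{W}\oplus\mathcal{N}$ is a parallel orthogonal splitting with $R|_{\mathcal{W}}=0$. The next point is to isolate, inside $\mathcal{N}$, the maximal parallel subbundle on which $R$ still vanishes: set $\mathcal{N}_2 := \{ Z\in\mathcal{N} \mid R(\cdot,\cdot,Z,\cdot)=0 \text{ and } R(\cdot,\cdot,\cdot,\overline{Z})=0 \}$, or more invariantly the kernel of the curvature operator restricted to $\mathcal{N}$; because $\nabla R=0$, this is $\nabla$-parallel, hence (with $\mathcal{W}$) gives a parallel flat factor $\mathcal{W}+\mathcal{N}_2$, and its orthogonal complement $\mathcal{N}_1$ inside $\mathcal{N}$ is parallel as well. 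Then $\mathcal{N}_1$ carries a nondegenerate curvature action, and I would decompose it into holonomy-irreducible parallel pieces $\mathcal{N}_1 = \oplus_{j=1}^k \mathcal{K}_j$ by the usual Wedderburn-type argument for a parallel orthogonal bundle: the holonomy algebra $\mathfrak{g}\subset\mathfrak{u}(\mathcal{N}_1)$ acts with a complementable invariant subspace (unitarity), so one iterates to get the irreducible decomposition; parallelism of each $\mathcal{K}_j$ follows since holonomy-invariant subbundles are parallel for $\nabla$.

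The heart of the argument — and what is new relative to \cite{NZ-CAS} — is constructing the parallel holomorphic symplectic $(2,0)$-form on each $\mathcal{K}_j$ and identifying it with the standard one on $\mathbb{C}^{2k_j}$. Here I would follow the strategy of Theorem~1.4 of \cite{NZ-CAS}: the hypothesis that $(M,g)$ has no K\"ahler de Rham factor, together with $T'M = \mathcal{W}\oplus\mathcal{N}$ and the nondegeneracy of the torsion pairing on $\mathcal{N}$, should force the torsion to define, after restriction, a nondegenerate alternating form. More precisely, on $\mathcal{N}_1$ one examines the map $Z\mapsto \iota_{\overline Z}(T)$ and uses $\nabla T=0$ to see that the resulting $(2,0)$-form $\sigma$ (built from $T$ and the metric, so that $\sigma(X,Y)$ essentially records $\langle T(X,Y),\overline{\cdot}\rangle$ paired back via $g$) is $\nabla$-parallel; the K\"ahler-like identity and the first Bianchi identity for $R$ then give $d\sigma = 0$, i.e. $\sigma$ is a holomorphic symplectic form. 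On an irreducible piece $\mathcal{K}_j$, a parallel $(2,0)$-form is either zero or nondegenerate (its kernel is a parallel, hence holonomy-invariant, proper subbundle), and nonzero on $\mathcal{K}_j$ because $\mathcal{K}_j\subset\mathcal{N}_1$ where the torsion pairing is nondegenerate by construction of $\mathcal{N}_2$; hence $\dim_{\mathbb{C}}\mathcal{K}_j = 2k_j$ is even and the holonomy on $\mathcal{K}_j$ preserves both a Hermitian metric and a holomorphic symplectic form, i.e. sits in $\mathsf{U}(2k_j)\cap \mathsf{Sp}(2k_j,\mathbb{C}) = \mathsf{Sp}(k_j)$; a linear-algebra normalization then puts $\sigma$ in the standard form on $\mathbb{C}^{2k_j}$.

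I expect the main obstacle to be two intertwined points. First, verifying that the form $\sigma$ extracted from the Chern torsion is genuinely closed (holomorphic symplectic) and not merely parallel: this needs the full force of $\nabla T = 0$, $\nabla R = 0$ \emph{and} the K\"ahler-like symmetry, via the second Bianchi-type identity relating $\nabla T$, $R$, and $T*T$ — one must check the $T*T$ terms cancel on $\mathcal{N}_1$, which is where the absence of a K\"ahler de Rham factor (equivalently, $\mathcal{N}\neq\mathcal{N}$ trivially, i.e. the torsion being "spread out" over $\mathcal{N}_1$) is used. Second, the compatibility of the curvature-kernel splitting $\mathcal{N} = \mathcal{N}_1\oplus\mathcal{N}_2$ with the torsion pairing: I must rule out that $\sigma$ degenerates \emph{inside} $\mathcal{N}_1$, which amounts to showing that the kernel of the torsion form on $\mathcal{N}$ is exactly $\mathcal{N}_2$ — this requires relating the vanishing of $R$ on a parallel subbundle to the vanishing of $T$ there, presumably again through the Bianchi identity (an $R$-flat parallel piece on which $T$ is nondegenerate would produce a Chern-flat factor with a holomorphic symplectic structure, i.e. a genuine $\mathcal{K}_j$, contradiction) so that the bookkeeping closes up. Once these are in place, the iteration into irreducible $\mathcal{K}_j$'s and the $\mathsf{Sp}(k_j)$ identification are formal.
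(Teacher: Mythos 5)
There is a genuine gap, and it sits at the heart of your construction of the symplectic form. You propose to build $\sigma$ on $\mathcal{N}_1$ by examining $Z\mapsto \iota_{\overline Z}(T)$ for $Z$ a section of $\mathcal{N}_1$. But $\mathcal{N}$ is \emph{defined} as $\{ Z \mid \langle T(X,Y),\overline{Z}\rangle =0 \ \forall X,Y\}$, so for every $Z\in\mathcal{N}_1\subset\mathcal{N}$ this contraction vanishes identically: your $\sigma$ is the zero form. The correct source of the symplectic forms is the \emph{other} summand $\mathcal{W}=\mathrm{image}(T)$: since $R|_{\mathcal{W}}=0$ and $M$ is simply connected, $\mathcal{W}$ is trivialized by parallel holomorphic sections $Z_1,\dots,Z_{\ell_1}$, and the forms $\tau_i=\langle T(\cdot,\cdot),\overline{Z}_i\rangle=d\xi_i$ (with $\xi_i=\langle\cdot,\overline{Z}_i\rangle$) are parallel, exact, hence closed holomorphic $(2,0)$-forms; one then restricts these to $\mathcal{N}$. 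This also dissolves your first ``obstacle'': closedness is free because $\tau_i$ is exact — no second-Bianchi computation with $T\ast T$ terms is needed.

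Two further points. First, you define $\mathcal{N}_2$ as the curvature kernel inside $\mathcal{N}$; the paper instead picks $\tau\in\operatorname{span}\{\tau_i\}$ of maximal rank on $\mathcal{N}$ and takes $\mathcal{N}_2=\ker(\tau|_{\mathcal{N}})$, using the no-K\"ahler-factor hypothesis to show $\mathcal{N}_2$ is spanned by parallel sections (whence $R|_{\mathcal{N}_2}=0$) and $\tau|_{\mathcal{N}_1}$ is nondegenerate. With your definition the flatness of $\mathcal{N}_2$ is automatic but the nondegeneracy of the torsion form on $\mathcal{N}_1$ is not, and your proposed repair (``an $R$-flat parallel piece with nondegenerate form would be a genuine $\mathcal{K}_j$, contradiction'') is circular — nothing in the statement forbids a $\mathcal{K}_j$ from being flat a priori. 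Second, your final step ``a linear-algebra normalization puts $\sigma$ in the standard form'' is too quick: a nondegenerate parallel $(2,0)$-form on a Hermitian space is unitarily equivalent only to a block form $\operatorname{diag}(a_1E,\dots,a_{k}E)$ with singular values $a_i>0$, and $\mathsf{U}(2k_j)\cap\mathsf{Sp}(2k_j,\mathbb{C})=\mathsf{Sp}(k_j)$ only once all $a_i$ coincide. One must first observe that the eigenspaces of $A\overline{A}$ (where $A$ is the matrix of $\tau$) are holonomy-invariant — this follows from $\tilde{h}^{tr}A\tilde{h}=A$ and $\tilde{h}\in\mathsf{U}(2k)$, which give $\tilde{h}A^2=A^2\tilde{h}$ — so that irreducibility of $\mathcal{K}_j$ forces a single singular value and $\tau|_{\mathcal{K}_j}$ is a constant multiple of the standard form. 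This eigenspace argument is in fact what produces the decomposition $\mathcal{N}_1=\oplus_j\mathcal{K}_j$ in the first place, rather than an abstract Wedderburn iteration.
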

\begin{proof} We first recap the constructions in the proof of Theorem 1.4 in \cite{NZ-CAS}. By the simply-connectedness assumption and the Ambrose-Singer holonomy theorem, namely Theorem  \ref{thm:AS} stated in the next section, $\mathcal{W}$ is a trivial bundle which admits parallel (holomorphic) sections $Z_1, \cdots, Z_{\ell_1}$ where $\ell_1=\dim(\mathcal{W}_p)$. Let $\xi_i=\langle \cdot, \overline{Z}_{i}\rangle$ be the dual $1$-forms correspondingly. Then let  $\tau_i(\cdot, \cdot)=d\xi_i=\partial \xi_i$. It was proved in Lemma 6.2 of \cite{NZ-CAS} that $\tau_i(\cdot, \cdot)=\langle T(\cdot, \cdot), \overline{Z}_i\rangle$, which also equals to $-\iota_{\overline{Z}_i}\partial \omega$.  It was proved there, also can be seen by direct calculation, that $\tau_i(\cdot, \cdot)=\partial \xi_i$ and $\bar{\partial}\xi_i=0$,  that $\{\tau_i\}$ are parallel, holomorphic $(2, 0)$-forms.

The next key step in the proof of Theorem 1.4 of \cite{NZ-CAS} is to pick one $\tau$ in $\operatorname{span}\{\tau_i\}$ such that $\tau|_{\mathcal{N}}$ is of maximum rank. Without the loss of generality we denote this one by $\tau_1$. It then derived from the assumption that $M$ has no K\"ahler factor and $\tau_1$ is of maximum rank, there exists a orthogonal decomposition of $\mathcal{N}$ into $\mathcal{N}_1\oplus\mathcal{N}_2$ such that $\mathcal{N}_2$ is generated by parallel global sections, hence $R|_{\mathcal{N}_2}=0$. Moreover $\mathcal{N}_1$ is of even dimension and $\tau_1|_{\mathcal{N}_1}$ is non-degenerate. Let $\ell_2=\dim_{\mathbb{C}}(\mathcal{N}_2)$. Then $\dim_{\mathbb{C}}(\mathcal{N}_1)=2k$ and $2k+\ell_1+\ell_2=n$. Since $\mathcal{W}$ and $\mathcal{N}_2$ are generated by parallel (holomorphic) sections/vector fields, both $\mathcal{K}_0:=\mathcal{W}\oplus \mathcal{N}_2$ and $\mathcal{N}_1$ are invariant under the parallel transport with respect to the Chern connection. Fix a point $p\in M$ the holonomy action of any element $h\in G$ splits as $\tilde{h}\oplus \operatorname{id}: (\mathcal{N}_1)_p\oplus (\mathcal{K}_0)_p\to (\mathcal{N}_1)_p\oplus (\mathcal{K}_0)_p$.

Now consider $\tau|_{\mathcal{N}_1}$, a parallel $(2, 0)$ form on this sub-bundle of $T'M$, which we shall still denote by $\tau$. With respect to a chosen unitary frame of $\mathcal{N}_1$ at the point $p$, $\tau$ has the following matrix form:
$$A=\left[ \begin{array}{llll} a_1E & & & \\ & \ddots & & \\  & & a_kE &    \end{array} \right] ,   \ \ \ \ \ E = \left[ \begin{array}{ll } 0 & 1 \\ -1 & 0 \end{array} \right], \ \ \ \ a_1\geq \cdots \geq a_k >0.$$
We may collect the $a_i$ into distinct numbers of $b_j$. Namely  we write
$$
b_1\widetilde{E}=\left[ \begin{array}{llll} a_1E & & & \\ & \ddots & & \\  & & a_1E &    \end{array} \right],\quad  A=\left[ \begin{array}{llll} b_1\widetilde{E} & & & \\ & \ddots & & \\  & & b_{\ell_3}\widetilde{E} &    \end{array} \right] ,   \ \ \  b_1> \cdots > b_{\ell_3} >0.
$$
Namely $-b^2_j$ are distinct eigenvalues of $A \overline{A}$ if $A$ is the matrix representation of $\tau$ at $p$. Let $2n_j$ denote the dimension of the corresponding eigenspaces $\mathcal{V}_j$. Clearly $\sum_{j=1}^{\ell_3} n_j=k$. Since $\tau$ is parallel, $\tilde{h}^*\tau=\tau$ for any $h\in G$. In terms of matrix we have that $\tilde{h}^{tr} A \tilde{h} =A$. Here $\tilde{h}^{tr}$ denotes the transpose of $\tilde{h}$. Note that $\tilde{h}\in U(2k)$, $A\tilde{h}=\bar{\tilde{h}} A$. Since $A$ is real we have $\tilde{h}A=A \bar{\tilde{h}}$. From this we have that $\tilde{h}A^2=A^2 \tilde{h}$, which implies that $\tilde{h}$ also keeps each eigenspace $\mathcal{V}_j$ invariant.  This then implies that $\mathcal{N}_1$ further decomposes into orthogonal holonomy-invariant subbundles. The next statement follows from Weyl's completely reducibility theorem. Thus $\tau$ is a constant  multiple of the standard holomorphic symplectic $(2,0)$-form on each $\mathcal{V}_j$.
\end{proof}

\begin{corollary}\label{coro:21} For any $x, y\in T_pM$, $(R_{x, y})|_{\mathcal{K}_j}$ has vanishing trace. In particular, the  Ricci curvature of $R|_{\mathcal{K}_j}$ vanishes.
\end{corollary}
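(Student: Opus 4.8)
The plan is to exploit the parallel holomorphic symplectic $(2,0)$-form $\tau$ on each $\mathcal{K}_j$ produced by the preceding theorem. Since $\mathcal{K}_j$ is invariant under $\nabla$-parallel transport, for every $x,y\in T_pM$ the Chern curvature operator $R_{x,y}$ restricts to a complex-linear endomorphism of $(\mathcal{K}_j)_p$. The point I would push is that $\nabla\tau=0$ forces this endomorphism to lie in the symplectic Lie algebra of $(\mathcal{K}_j)_p$ attached to $\tau|_{\mathcal{K}_j}$, which is non-degenerate; and every element of a symplectic Lie algebra is traceless. That immediately yields the first assertion, and the Ricci statement is then just a matter of recognizing the relevant contraction as this trace.

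In detail, first I would record the infinitesimal consequence of $\nabla\tau=0$: applying the Ricci identity for $\nabla$ acting on the bundle $\Lambda^{2,0}$ (equivalently, differentiating twice and antisymmetrizing) gives $R_{x,y}\cdot\tau=0$ for all $x,y$, where $R_{x,y}$ acts as a derivation, i.e. $\tau(R_{x,y}u,v)+\tau(u,R_{x,y}v)=0$ for all $u,v\in(\mathcal{K}_j)_p$. Since $\tau|_{\mathcal{K}_j}$ is a constant multiple of the standard symplectic form (hence non-degenerate), this says precisely that $R_{x,y}|_{\mathcal{K}_j}\in\mathfrak{sp}((\mathcal{K}_j)_p,\tau)\cong\mathfrak{sp}(2k_j,\mathbb{C})$. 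A one-line computation with the defining relation $A^{tr}J+JA=0$, where $J$ is the matrix of $\tau$, gives $A=-J^{-1}A^{tr}J$, hence $\operatorname{tr}A=-\operatorname{tr}A^{tr}=-\operatorname{tr}A$, so $\operatorname{tr}(R_{x,y}|_{\mathcal{K}_j})=0$. This is exactly the first claim.

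For the statement about Ricci curvature, I would use that the Chern curvature is of type $(1,1)$, so for real $x,y$ the only nonvanishing components of $R_{x,y}$ are those of the form $R_{X,\overline{Y}}$ and $R_{\overline{X},Y}$; and the (first) Ricci of the restricted curvature tensor at a pair $(X,\overline{Y})$ is, by definition, $\sum_a R(X,\overline{Y},f_a,\overline{f_a})=\operatorname{tr}(R_{X,\overline{Y}}|_{\mathcal{K}_j})$ for a unitary frame $\{f_a\}$ of $\mathcal{K}_j$. By the previous paragraph this vanishes, so the first Ricci of $R|_{\mathcal{K}_j}$ is zero; since $(M^n,g)$ is Chern K\"ahler-like by Proposition \ref{prop:21}, all three Ricci tensors coincide, and hence all of them vanish on $\mathcal{K}_j$.

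I expect the only genuine subtlety to be bookkeeping: making precise that ``the Ricci curvature of $R|_{\mathcal{K}_j}$'' is the trace of the curvature operator over $\mathcal{K}_j$ alone --- legitimate because the $\mathcal{K}_j$ are mutually orthogonal, $\nabla$-parallel, and $R$ vanishes on $\mathcal{W}\oplus\mathcal{N}_2$ --- and keeping the Hermitian-versus-complex-bilinear pairing conventions straight when identifying $\sum_a R(X,\overline{Y},f_a,\overline{f_a})$ with $\operatorname{tr}(R_{X,\overline{Y}})$. Everything else is the standard fact that a connection preserving a non-degenerate $2$-form has curvature valued in the corresponding symplectic Lie algebra, so the proof should be short.
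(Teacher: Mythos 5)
Your proposal is correct and rests on the same key idea as the paper's proof: the parallel non-degenerate $(2,0)$-form $\tau|_{\mathcal{K}_j}$ forces the curvature endomorphism to be traceless, and the Ricci contraction $\sum_\ell R(v,\bar v,E^j_\ell,\bar E^j_\ell)$ is exactly that trace. The only (minor) difference is that you argue infinitesimally, using the Ricci identity to place $R_{x,y}|_{\mathcal{K}_j}$ in $\mathfrak{sp}\bigl((\mathcal{K}_j)_p,\tau\bigr)$, whereas the paper works at the group level, noting that the holonomy restricted to $\mathcal{K}_j$ lies in $\mathsf{SU}(2k_j)$ and that $R_{x,y}|_{\mathcal{K}_j}$ arises as the derivative of a one-parameter family of holonomy elements; both routes are valid and of essentially the same length.
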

\begin{proof}
Since $\tau|_{\mathcal{K}_j}$ is a holomorphic parallel $(2, 0)$-form, the restriction of the holonomy action on $\mathcal{K}_j$ satisfies $h\in SU(2k_j)$ with $2k_j=\dim_{\mathbb{C}}(\mathcal{K}_j)$. On the other hand, $R_{x, y}|_{\mathcal{K}_j}$ can be obtained as the derivative of one parameter family $h(t)$ (cf. Lemma 2.2 in \cite{Ni-21} and its proof), thus with zero trace. The final statement is due to that  $\Ric_{R|_{\mathcal{K}_j}}(v, \bar{v})=\sum_\ell R(v, \bar{v}, E_{\ell}^j, \bar{E}_{\ell}^j)$ for $v\in \Gamma(\mathcal{K}_j)$ where $\{E_\ell^j\}$ is a unitary base of $\mathcal{K}_j$.
\end{proof}

\vspace{0.3cm}

\section{Symmetric holonomy systems}

Here we work with real numbers and vector bundles over real numbers. To apply the discussion here to the previous section one can consider the realification of complex bundles. Note that the real part of a Hermitian metric on a complex bundle is a Riemannian metric on the realification of the complex bundle.

The concept of holonomy systems was introduced by J. Simons \cite{Sim} in his intrinsic proof of Berger's holonomy theorem for the Riemannian holonomy groups with respect to the Levi-Civita connection. The Riemannian holonomy system is a triple  $S=\{V, \Rm, G\}$, which   consists of,   a Euclidean space $V$ of dimension $\ell$ (we call it the degree of $S$)  endowed with an inner product $\langle \cdot, \cdot\rangle$ (also denoted by a bilinear form $H$),  a connected compact subgroup $G$ of $\mathsf{SO}(\ell)$, and  an algebraic curvature operator $\Rm$ (defined on $V$)  satisfying  the 1st Bianchi identity and that $\Rm_{x, y}\in \mathfrak{g}$, $\forall x, y\in V$ with $\mathfrak{g}\subset \mathfrak{so}(\ell)$ being the Lie algebra of $G$. To see the relevance we first recall the Ambrose-Singer's holonomy theorem \cite{KN} (see also \cite{Ni-21} for a simple proof of half of the result).

\begin{theorem}[Ambrose-Singer]\label{thm:AS}  For any connection $\nabla$ on a Riemannian vector bundle $(E, h)$, let $R$ be its curvature. Given any path $\gamma$ from $q$ to $p$, let $\gamma$ also denote the parallel transport along it. Then $\{\gamma(\Rm^q)\}$ generates the holonomy algebra, where $\gamma(\Rm^q)\in \mathfrak{so}(E_p)$ is defined $\gamma \cdot \left(\Rm_{x, y}|_{E_q}\right) \cdot \gamma^{-1}$. Here $x, y\in T_qM$.
\end{theorem}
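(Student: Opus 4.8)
The plan is to prove the two inclusions separately. Write $\mathfrak{hol}_p\subset\mathfrak{so}(E_p)$ for the restricted holonomy algebra at $p$, and let $\mathfrak{g}_p\subset\mathfrak{so}(E_p)$ be the Lie subalgebra \emph{generated} by the set $\{\gamma(\Rm^q_{x,y})\}$, where $\gamma$ ranges over all (piecewise $C^1$) paths from an arbitrary $q$ to $p$ and $x,y\in T_qM$. I want to show $\mathfrak{g}_p=\mathfrak{hol}_p$.

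For the inclusion $\mathfrak{g}_p\subseteq\mathfrak{hol}_p$ --- the ``half'' that has a soft proof, cf. \cite{Ni-21} --- I would fix $\gamma:q\rightsquigarrow p$ together with $x,y\in T_qM$, and take a one-parameter family $c_s$ of null-homotopic loops based at $q$ spanned by $x,y$ and enclosing area of order $s$, so that the variation-of-holonomy formula gives $\tfrac{d}{ds}\big|_{s=0}P_{c_s}=-\,\Rm^q_{x,y}$ on $E_q$. Since $M$ is connected, $\mathrm{Hol}_p=\gamma\,\mathrm{Hol}_q\,\gamma^{-1}$ and $P_{c_s}\in\mathrm{Hol}_q$; conjugating by $\gamma$ and differentiating at $s=0$ gives $\gamma(\Rm^q_{x,y})=\gamma\,\Rm^q_{x,y}\,\gamma^{-1}\in\mathfrak{hol}_p$. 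As $\mathfrak{hol}_p$ is a Lie algebra, it then contains the generated subalgebra $\mathfrak{g}_p$.

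For the reverse inclusion $\mathfrak{hol}_p\subseteq\mathfrak{g}_p$ I would globalize. For each $q$ let $\mathfrak{g}_q\subset\mathfrak{so}(E_q)$ be the Lie algebra generated by $\{\sigma(\Rm^{q'}_{x,y})\}$ over all paths $\sigma$ ending at $q$; since a parallel transport $P_\sigma:E_{q'}\to E_q$ acts on $\mathfrak{so}$ by conjugation --- hence as a Lie algebra isomorphism carrying the generating set onto the generating set --- one gets $P_\sigma(\mathfrak{g}_{q'})=\mathfrak{g}_q$. Thus $\mathcal{G}:=\bigsqcup_q\mathfrak{g}_q$ is a $\nabla$-parallel, fiberwise-bracket-closed subbundle of $\mathfrak{so}(E)$, of constant rank by connectedness of $M$. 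Passing to the orthonormal frame bundle $\mathsf{O}(E)\to M$ with connection form $\omega$ and curvature form $\Omega$, the subbundle $\mathcal{G}$ assigns to a frame $u$ over $q$ the subspace $\mathfrak{g}^u=u^{-1}\mathfrak{g}_q u\subset\mathfrak{so}(\ell)$, which is constant along horizontal curves and $\mathrm{Ad}$-equivariant along the relevant fibers, and $\Omega_u$ takes values in $\mathfrak{g}^u$ because $\Rm^q_{x,y}\in\mathfrak{g}_q$ by definition. I would then form the distribution $\mathcal{D}$ on $\mathsf{O}(E)$ whose value at $u$ is the horizontal subspace at $u$ together with the fundamental vertical vectors $A^{*}_{u}$ for $A\in\mathfrak{g}^u$; the structure equation $d\omega=-\tfrac12[\omega,\omega]+\Omega$, combined with $\Omega\in\mathfrak{g}^u$ and the constancy/equivariance of $u\mapsto\mathfrak{g}^u$, shows $\mathcal{D}$ is involutive. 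Its maximal integral manifold $Q$ through a fixed frame over $p$ is a reduced principal subbundle with structure group the connected subgroup $G\subset\mathsf{O}(\ell)$ having Lie algebra $\mathfrak{g}^u\cong\mathfrak{g}_p$, and the connection restricts to $Q$; hence $\mathrm{Hol}^{0}_{p}\subseteq G$ and $\mathfrak{hol}_p\subseteq\mathfrak{g}_p$. This last step is exactly the reduction theorem, for which I would cite \cite{KN}.

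The only genuine obstacle is this reverse inclusion: checking the involutivity of $\mathcal{D}$ and that its leaf through the chosen frame is an honest reduced subbundle onto which $\nabla$ reduces (the reduction theorem). The forward inclusion is formal, and the distinction between the restricted and the full holonomy group is harmless here since it does not affect the Lie algebra. If one prefers to avoid the frame bundle, the same parallelism of $\mathcal{G}$ can be used directly on $E$ to see that $\mathrm{Hol}^{0}_{p}$ lies in the connected subgroup $G'\subset\mathsf{O}(E_p)$ with Lie algebra $\mathfrak{g}_p$, but any clean implementation still reduces to the Frobenius argument above.
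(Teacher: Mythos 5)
The paper does not actually prove this statement: it is the classical Ambrose--Singer holonomy theorem, quoted with references to \cite{KN} and to \cite{Ni-21} for a short proof of the easy inclusion. Your sketch is precisely the standard Kobayashi--Nomizu argument --- differentiating the holonomy of small loops for $\mathfrak{g}_p\subseteq\mathfrak{hol}_p$, and the parallel, bracket-closed subbundle $\mathcal{G}$ together with the Frobenius/reduction theorem for the converse --- and it is correct, the remaining details (smoothness of $\mathcal{G}$, the reduction theorem itself) being exactly the standard ones you appropriately defer to \cite{KN}.
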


In the case that $(M, g)$ is a Hermitian manifold with a CAS structure, with respect to the Chern connection, which preserves the inner product of the underlying real tangent bundle, for any parallel invariant subbundle $\mathcal{K}\subset T'M$, one can apply the above to the bundle $\mathcal{V}=\mathcal{K}\oplus \overline{\mathcal{K}}$. Since $\nabla \Rm=0$, it is easy to see that $\gamma(\Rm^q)=\Rm|_{\mathcal{V}_p}$. Consequently we have

\begin{proposition} \label{prop:31} Let $(M, g)$ be a  Hermitian manifold whose Chern connection is Ambrose-Singer. Let $\mathcal{V}$ be as above. Let $G$ be the restricted holonomy group.
Then $S=(\mathcal{V}_p, \Rm, G)$ is a holonomy system. In fact it is  symmetric.
\end{proposition}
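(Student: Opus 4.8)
\textit{Proof proposal.}

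The plan is to unwind Simons' definition and observe that the ``symmetric'' clause is nothing more than the parallelism of the Chern curvature, which is built into the CAS hypothesis. Recall that a holonomy system $S=\{V,\Rm,G\}$ is called \emph{symmetric} when the algebraic curvature operator is fixed by the natural $G$-action, i.e.\ $a\cdot\Rm=\Rm$ for every $a\in G$, where $(a\cdot\Rm)_{x,y}=a\circ\Rm_{a^{-1}x,\,a^{-1}y}\circ a^{-1}$; equivalently $a\,\Rm_{x,y}\,a^{-1}=\Rm_{ax,ay}$ for all $x,y\in V$. Thus, once we know that $S$ is a holonomy system at all, proving symmetry amounts to verifying this one identity. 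That $S$ \emph{is} a holonomy system I would dispatch first using ingredients already in hand: $\mathcal{V}_p$ carries the Euclidean real part of the Hermitian metric; $\Rm|_{\mathcal{V}_p}$ is an algebraic curvature operator satisfying the first Bianchi identity by Proposition \ref{prop:21} (this is exactly where K\"ahler-likeness of the Chern curvature is needed, since a general torsion connection need not satisfy it); $\Rm_{x,y}|_{\mathcal{V}_p}$ lies in the holonomy algebra $\mathfrak{g}$ because, by the Ambrose--Singer theorem (Theorem \ref{thm:AS}) together with $\nabla\Rm=0$, one has $\gamma(\Rm^q)=\Rm|_{\mathcal{V}_p}$ as noted in the discussion just above; and $G\subset\mathsf{SO}(\mathcal{V}_p)$ since the Chern connection preserves the real inner product, with $G$ connected (it is the restricted holonomy group) and compact (this last point is where completeness of $(M,g)$ enters).

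For the symmetry itself, the key steps are the following. First, since $\mathcal{K}$ is $\nabla$-parallel, $\nabla$ restricts to a metric connection on the realification of $\mathcal{V}=\mathcal{K}\oplus\overline{\mathcal{K}}$, and the curvature of the restriction is $\Rm|_{\mathcal{V}}$; from $\nabla\Rm=0$ on $M$ one gets that $\Rm|_{\mathcal{V}}$ is parallel for this restricted connection. Second, a parallel tensor is invariant under parallel transport around loops: for any piecewise-smooth loop $\gamma$ based at $p$, writing $\gamma$ also for the induced transport on $\mathcal{V}_p$, one has $\gamma\circ\Rm_{x,y}\circ\gamma^{-1}=\Rm_{\gamma x,\gamma y}$ for all $x,y\in\mathcal{V}_p$ --- this is precisely the identity recorded just before the statement of this proposition (compare Theorem \ref{thm:AS}), specialized to $q=p$. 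Third, by the Ambrose--Singer holonomy theorem the restricted holonomy group $G$ is generated by exactly these transports $\gamma$; hence every $a\in G$ satisfies $a\,\Rm_{x,y}\,a^{-1}=\Rm_{ax,ay}$, i.e.\ $a\cdot\Rm=\Rm$, which is the definition of $S$ being symmetric.

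The main difficulty here is bookkeeping rather than any genuine obstruction: one must check that restricting $\nabla$ and $\Rm$ to the invariant subbundle $\mathcal{V}$ and passing to the realification are mutually compatible, and that ``$G$ generated by loop transports'' and ``$G$ the connected Lie group with algebra $\mathfrak{g}$'' give the same group acting in the same way --- both of which are automatic once $\nabla\Rm=0$ pins the holonomy algebra down to a fixed finite-dimensional family of curvature operators. In short, the CAS hypothesis delivers $\nabla\Rm=0$, and ``parallel curvature $\Leftrightarrow$ holonomy-invariant curvature'' is then immediate from Ambrose--Singer; the only care needed is in the reduction to the subbundle $\mathcal{V}$.
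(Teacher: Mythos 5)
Your proposal is correct and follows essentially the same route as the paper: the holonomy-system axioms come from the Ambrose--Singer theorem together with the first Bianchi identity supplied by the K\"ahler-like property (Proposition \ref{prop:21}), and the symmetry is exactly the statement that the parallel tensor $\Rm$ is fixed by the loop transports generating $G$, i.e.\ by $\nabla\Rm=0$. The only cosmetic quibble is your parenthetical that compactness of $G$ is ``where completeness enters'' --- compactness of the restricted holonomy group is a separate (standard but not completeness-driven) fact, which the paper also takes for granted.
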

\begin{proof} The first part is a direct consequence of Theorem \ref{thm:AS} and the observation that the curvature $\Rm$ satisfying the first Bianchi identity. The second statement follows from the fact that $\nabla \Rm=0$. Recall that a holonomy system is called symmetric if $\gamma(\Rm)=\Rm$ for any $\gamma\in G$.
\end{proof}

The following result holds the key of the algebraic aspect of the proof.

\begin{proposition}\label{prop:32} Assume that $S=\{V, \Rm, G\}$ is an irreducible symmetric holonomy system.
Then the Ricci flatness of $\Rm$ implies that $\Rm$ is flat.
\end{proposition}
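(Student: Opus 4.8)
The plan is to promote the abstract triple $S=\{V,\Rm,G\}$ to a Lie-algebraic object — the transvection algebra of a germ of a Riemannian symmetric space — and then carry out, in purely algebraic form, Cartan's observation that a Ricci-flat symmetric space is flat. Concretely, put $\mathfrak h:=\operatorname{span}\{\Rm_{x,y}:x,y\in V\}\subseteq\mathfrak g$; differentiating the symmetry condition $\gamma(\Rm)=\Rm$ gives $A\cdot\Rm=0$ for every $A\in\mathfrak g$, which in particular shows that $\mathfrak h$ is a subalgebra, and one sets $\mathfrak g_0:=\mathfrak h\oplus V$ with bracket equal to the commutator in $\mathfrak g$ on $\mathfrak h$, to $Ax$ for $A\in\mathfrak h$ and $x\in V$, and to $\Rm_{x,y}$ for $x,y\in V$. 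Antisymmetry is clear, and the only nontrivial instances of the Jacobi identity are the $(V,V,V)$ case — which is exactly the first Bianchi identity, built into the notion of a holonomy system — and the $(\mathfrak h,V,V)$ case — which is exactly $A\cdot\Rm=0$. Thus $\mathfrak g_0$ is a finite-dimensional Lie algebra carrying a $\mathbb Z/2$-grading with even part $\mathfrak h$ and odd part $V$; it is generated by $V$ because $[V,V]=\mathfrak h$; and the curvature of $S$ is recovered as $\Rm_{x,y}z=[[x,y],z]$.

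The crux is to detect Ricci-flatness through the Killing form $B_0$ of $\mathfrak g_0$. The grading makes $\ad_x\,\ad_A$ interchange the summands $\mathfrak h$ and $V$ for $x\in V$, $A\in\mathfrak h$, so $B_0(V,\mathfrak h)=0$. A short computation with the symmetries of $\Rm$ gives $\operatorname{tr}_V(\ad_x\,\ad_y|_V)=\Ric(x,y)$ for $x,y\in V$, and cyclicity of the trace applied to the maps $\ad_x|_V:V\to\mathfrak h$ and $\ad_y|_{\mathfrak h}:\mathfrak h\to V$ identifies $\operatorname{tr}_{\mathfrak h}(\ad_x\,\ad_y|_{\mathfrak h})$ with the same quantity; hence $B_0|_{V\times V}=2\,\Ric$ (the classical expression of the Ricci curvature of a symmetric space in terms of the Killing form — only the vanishing will matter). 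Therefore the hypothesis that $\Rm$ is Ricci-flat says precisely that $V$ lies in the null space $\mathfrak n$ of $B_0$. Since $\mathfrak n$ is an ideal whose own Killing form $B_0|_{\mathfrak n}$ vanishes, it is solvable by Cartan's criterion; being a subalgebra that contains the generating set $V$, it equals $\mathfrak g_0$, so $\mathfrak g_0$ is solvable.

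To finish, Lie's theorem gives that in the solvable algebra $\mathfrak g_0$ every element of $[\mathfrak g_0,\mathfrak g_0]$ acts nilpotently under $\ad_{\mathfrak g_0}$ (the diagonal characters of a Lie flag annihilate the commutator subalgebra). In particular, for $A\in\mathfrak h=[V,V]\subseteq[\mathfrak g_0,\mathfrak g_0]$ the operator $\ad_{\mathfrak g_0}(A)$ is nilpotent, hence so is its restriction to $V$ — but that restriction is just the skew-symmetric endomorphism $A\in\mathfrak{so}(V)$, and a nilpotent skew-symmetric operator vanishes. Thus $A=0$, so $\mathfrak h=0$, that is, $\Rm_{x,y}=0$ for all $x,y\in V$, i.e.\ $\Rm=0$.

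The step I expect to demand the most care is the bookkeeping in the construction rather than the Lie-theoretic conclusion: verifying the Jacobi identity for $\mathfrak g_0$ — precisely where the first Bianchi identity (Proposition \ref{prop:21} in the geometric setting) and the symmetry hypothesis are consumed — and pinning down the proportionality $B_0|_V=2\,\Ric$ with its signs. Conceptually the argument is the algebraic shadow of the trichotomy of orthogonal symmetric Lie algebras into compact, non-compact and Euclidean types, with Ricci-flatness excluding the first two; one sees moreover that it uses only that $S$ is symmetric and $\Rm$ Ricci-flat, the irreducibility hypothesis being simply what is available in the application to Theorem \ref{thm:Class} via the splitting results recalled in \S 2.
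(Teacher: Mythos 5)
Your argument is correct, and it takes a genuinely different route from the paper's. The paper keeps the full holonomy algebra $\mathfrak g$, forms Nomizu's algebra $\mathfrak J=\mathfrak g\oplus V$, and runs a quantitative Killing-form computation: the skew-symmetry of $\mathfrak g\subset\mathfrak{so}(\ell)$ makes $B'|_{\mathfrak g}=K-2\langle\cdot,\cdot\rangle$ negative definite, irreducibility plus Schur's lemma forces $B'|_V=\lambda H$, and the cases $\lambda=0$ and $\lambda\neq 0$ are handled separately, the latter via the identity $\Ric_{\Rm}(x,x)=\frac1\lambda\sum_iB([x,e_i],[x,e_i])$, which also yields Kostant's formula $\Rm_{z,w}=-\lambda\mathsf T^{-1}P(z\wedge w)$ as a byproduct. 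You instead pass to the smaller transvection algebra $\mathfrak h\oplus V$ with $\mathfrak h=[V,V]=\operatorname{span}\{\Rm_{x,y}\}$, identify $B_0|_{V\times V}$ with $2\Ric$ (up to the sign convention, which is immaterial), and let soft structure theory finish: the radical of the Killing form is a solvable ideal containing the generating set $V$, hence all of $\mathfrak h\oplus V$, so the algebra is solvable; Lie's theorem then makes every $\Rm_{x,y}\in[\,\cdot,\cdot\,]$ ad-nilpotent, and a nilpotent skew-symmetric operator vanishes. Each step checks out (the Jacobi identity is exactly first Bianchi plus $A(\Rm)=0$, the Killing form of an ideal is the restriction of the ambient one, and the pair symmetry of $\Rm$ --- itself a consequence of first Bianchi --- is what matches the $V$-trace and the $\mathfrak h$-trace in $B_0|_V=2\Ric$). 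Your route buys a strictly stronger statement: it uses neither the irreducibility of $V$ nor any property of $G$ beyond $\Rm_{x,y}\in\mathfrak{so}(V)$, so the word ``irreducible'' can be dropped from the proposition. What the paper's route buys is the explicit curvature formula and a package of Killing-form computations (Claims 1--3) that are reused almost verbatim for the generalized holonomy systems of \S 5, where the torsion term $[x,y]=-\Rm_{x,y}\oplus(-T(x,y))$ destroys the $\mathbb Z/2$-grading your argument leans on; your method would not transport there without modification.
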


\begin{proof} Here we follow the argument in Theorem 3.1 of \cite{Ni-21}. First we set up the notations and conventions. Identify $\mathfrak{so}(n)$ with $\wedge^2 V$. Here $S^2(\wedge^2 V)$ denotes the symmetric transformations of $\wedge^2V$. The $S^2_B(\cdot)$ denotes the subspace satisfying the 1st Bianchi identity.

Define the metric on $\mathfrak{gl}(V)$ by
$$
\langle A, B\rangle \doteqdot\frac{1}{2}\sum_i\langle A(e_i), B(e_i)\rangle=\frac{1}{2}\operatorname{trace}(B^{tr} A)
$$
In particular for $A, B\in \mathfrak{g}$ the above inner product applies.
Let $P$ be the projection from $\wedge^2(V)$ onto $\mathfrak{g}$, and let $\mathsf{T}:\mathfrak{g}\to \mathfrak{g}$ be the  symmetric isomorphism corresponding to  the negative definite bilinear form on $\mathfrak{g}$
\begin{equation}\label{killing1}
B(A, A')\doteqdot K(A, A')-2\langle A, A'\rangle
\end{equation}
with $K$ being the Killing form of $\mathfrak{g}$ (defined as $K(A, A')=\operatorname{trace}(\ad_A\cdot \ad_{A'})$). Namely $\mathsf{T}$ is defined by $B(A, A')=\langle \mathsf{T}(A), A'\rangle$.   Let $\mathfrak{J}=\mathfrak{g}\oplus V$ (orthogonal sum with the inner product of $V$ and  $\langle A, B\rangle$ on $\mathfrak{g}$ as elements in $\mathfrak{so}(n)$) and define a Lie algebra structure on $\mathfrak{J}$ by letting
$$
[A, A']\doteqdot [A, A']; \quad [x, y]\doteqdot- \Rm_{x, y}; \quad [A, x]\doteqdot A(x),  \forall A, A' \in \mathfrak{g},\,  x, y\in V.
$$
Since $A(\Rm)=0, \forall A\in \mathfrak{g}$, together with the first Bianchi identity, it is easy to check that the bracket so defined satisfies the Jacobi identity, namely $\mathfrak{J}$ is a Lie algebra. Let $B'$ be the Killing form of $J$. It is a  basic result of Lie algebra that $B'$ is $\ad_{J}$-invariant.

{\it Claim 1:  $B'|_{\mathfrak{g}}$  is given by $B$ defined by (\ref{killing1}), hence is  negative definite.}
 By the definition $B'(A, B)=\operatorname{trace} (\ad_A \cdot \ad_B)=\sum_{i=1}^n \langle \ad_A \cdot \ad_B (e_i), e_i\rangle +\sum_{\alpha} \langle \ad_A \cdot \ad_B (A_\alpha), A_\alpha\rangle$ where $\{e_i\}$ ($\{A_\alpha\}$) is an orthonormal frame of $V$ ($\mathfrak{g}$ respectively). The second summand is $K(A, B)$. By the definition of the Lie bracket the first summand is $-\langle B(e_i), A(e_i)\rangle =-2\langle A, B\rangle$. We first need the following computational results.

{\it Claim 2: $B'(A, x)=0$ for $A\in \mathfrak{g}$ and $x\in V$.} Similarly $$B'(A, x)=\operatorname{trace} (\ad_A \cdot \ad_x)=\sum_{i=1}^n \langle \ad_A \cdot \ad_x (e_i), e_i\rangle +\sum_{\alpha} \langle \ad_A \cdot \ad_x (A_\alpha), A_\alpha\rangle$$ where $\{e_i\}$ ($\{A_\alpha\}$) is an orthonormal frame of $V$ ($\mathfrak{g}$ respectively). The  first term vanishes since  $$\langle \ad_A \cdot \ad_x (e_i), e_i\rangle=\langle [A, \Rm_{x, e_i}]_{\mathfrak{g}}, e_i\rangle=0.$$ For the second term, $\langle \ad_A \cdot \ad_x (A_\alpha), A_\alpha\rangle=\langle -A(A_\alpha(x)), A_\alpha\rangle=0$.

{\it Claim 3: $B'|_{V}=\lambda H$, where $H(x, y):=\langle x, y\rangle$, and  $\lambda\ne 0$ if $\Rm\ne 0$.} Since $B'|_V$ is $\ad_\mathfrak{g}$-invariant, hence $G$-invariant.  By  the irreducibility of $G$-action on $V$,  it implies that  $B'(x, y)=\lambda\langle x, y\rangle$ for some $\lambda$.

 If $\lambda=0$, $B'([x, y], [x, y])=B'(x, [y, [x, y]])=0$ since $[y, [x,y]]\in V$.  Now by {\it Claim 1}, which asserts that $B'|_{\mathfrak{g}}$ is negative definite, we have that $[x, y]=-\Rm_{x, y}=0, \forall x, y \in V$. Hence $\Rm=0$.

Below we assume that $\lambda\ne 0$, otherwise we have proved the result by the above argument.
 By Claim 3 we have that
$\langle [[x,y], z], w\rangle=\frac{1}{\lambda} B'([[x,y], z], w)=\frac{1}{\lambda} B'([x,y], [z,w])$, which in turn equals to $\frac{1}{\lambda} B([x,y], [z,w])$ by Claim 1.  Putting them together we have the equation for $x,y,z,w \in V$
\begin{equation} \label{eq:42}
-\langle \Rm_{x, y} z, w\rangle = \langle [[x,y], z], w\rangle = \frac{1}{\lambda} B([x,y], [z,w]\rangle= \frac{1}{\lambda} \langle [x,y], \mathsf{T}([z, w])\rangle.
\end{equation}
Now by (\ref{eq:42}) the Ricci curvature can be expressed as
$$
\Ric_{\Rm}(x, x)=\sum_{i=1}^n \langle \Rm_{x, e_i} e_i, x\rangle =\frac{1}{\lambda}\sum B([x, e_i], [x, e_i]).
$$
Hence if $\Ric_{\Rm}=0$ it implies that $[x, e_i]=-\Rm_{x, e_i}=0$ for any $e_i, x$, namely $\Rm=0$.

Note that a  calculation shows that
$\langle [[x,y], z], w\rangle =-\langle \Rm_{x, y}(z), w\rangle=\langle [x, y], (z\wedge w)\rangle$. Hence by the above (\ref{eq:42})  we have that
$$
\langle [x, y], (z\wedge w)\rangle=\frac{1}{\lambda} \langle [x,y], \mathsf{T}([z, w])\rangle=-\frac{1}{\lambda} \langle [x,y], \mathsf{T}(\Rm_{z,w})\rangle.
$$
This proves that $\Rm_{z,w}=-\lambda \mathsf{T}^{-1} \cdot P(z\wedge w)$, an expression of curvature (due to Kostant) in terms of the Lie algebraic structure.
\end{proof}
The above proposition is  the algebraic counter part  of Theorem 8.6 of Vol II of \cite{KN}.

\vspace{0.3cm}

\section{Hermitian manifolds  with a CAS structure}

Recall that the subbundle $\mathcal{N}_0$, which is defined as
$$
\mathcal{N}_0:= \{X \in \mathcal{N}\ |\   T(X, Y)=0, \ \ \ \forall \ Y\in T^{1,0}M \},
$$
was introduced in \cite{NZ-CAS} to capture any K\"ahler de Rham factor in the universal cover $\widetilde{M}$ of a Hermitian manifold $M$ with a CAS structure. It is easy to see that if there are de Rham K\"ahler factors in $\widetilde{M}$, then the corresponding holomorphic tangent subbundle  must be $\mathcal{N}_0$.
Applying it to the $\widetilde{M}$ we have the following result.

\begin{proposition} Let $(M, g)$ be a Hermitian manifold with a CAS structure. Let $\widetilde{M}$ be its universal cover.
Let $\mathcal{N}_0$ be the subbundle of $T'\widetilde{M}$ defined above. Then $\widetilde{M}$ splits as $M_1\times M_2$ with $M_2$ being K\"ahler and $T'M_2=\mathcal{N}_0$. Moreover $M_1$ does not admit any K\"ahler factor.
\end{proposition}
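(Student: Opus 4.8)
The plan is to deduce the splitting from the classical (Riemannian) de Rham decomposition theorem, after first upgrading the $\nabla$-parallelism of $\mathcal{N}_0$ to $\nabla^{g}$-parallelism. Set $\mathcal{D}_0:=\mathcal{N}_0\oplus\overline{\mathcal{N}_0}$, a real $J$-invariant distribution on $\widetilde{M}$, and let $\mathcal{D}_1:=\mathcal{D}_0^{\perp}$ be its $g$-orthogonal complement; since $T'\widetilde{M}=\mathcal{N}_0\oplus\mathcal{W}\oplus(\mathcal{N}\ominus\mathcal{N}_0)$ orthogonally, $\mathcal{D}_1$ is the realification of the complex subbundle $\mathcal{W}\oplus(\mathcal{N}\ominus\mathcal{N}_0)\subset T'\widetilde{M}$, hence $J$-invariant as well. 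The core step is to show that $\mathcal{D}_0$ (equivalently $\mathcal{D}_1$) is parallel for the Levi-Civita connection $\nabla^{g}$. Granting this, and recalling that $\widetilde{M}$ is complete (being a Riemannian covering of a complete manifold) and simply connected, the de Rham theorem yields a Riemannian product $\widetilde{M}=M_1\times M_2$ with $TM_2=\mathcal{D}_0$; the $J$-invariance of the two factor distributions makes this a product of Hermitian manifolds, and in particular $T'M_2=\mathcal{N}_0$, as required.

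For the parallelism I would proceed in two steps. First, $\mathcal{N}_0$ is $\nabla$-parallel: $\mathcal{N}$ is $\nabla$-parallel by the $\mathcal{W}$--$\mathcal{N}$ decomposition recalled in \S2, while $\nabla T=0$ forces the kernel distribution $\{X\in T'\widetilde{M}: T(X,\cdot)=0\}$ to be $\nabla$-parallel (parallel transport commutes with the tensor $T$), and $\mathcal{N}_0$ is their intersection; hence $\mathcal{D}_0$ is $\nabla$-parallel. Second, I would pass from $\nabla$ to $\nabla^{g}$ using the contorsion $\gamma:=\nabla^{g}-\nabla$, which is determined by the Chern torsion through a fixed linear expression of the form $2\langle\gamma_XY,Z\rangle=c_1\langle T(X,Y),Z\rangle+c_2\langle T(X,Z),Y\rangle+c_3\langle T(Y,Z),X\rangle$. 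Fix $Y\in\mathcal{D}_0$ and $Z\in\mathcal{D}_1$. In the two terms where $Y$ enters as an argument of $T$, decompose $Y=Y^{1,0}+Y^{0,1}$: then $T(\cdot,Y^{1,0})=0$ since $Y^{1,0}\in\mathcal{N}_0$, and $T(\cdot,Y^{0,1})=0$ by the $(2,0)$-type of the Chern torsion, so both terms vanish. In the remaining term, $Y\in\mathcal{D}_0$ is paired against $T(X,Z)\in\mathcal{W}$, which lies in the complexification of $\mathcal{D}_1=\mathcal{D}_0^{\perp}$ and is therefore $g$-orthogonal to $Y$; so this term vanishes too. Hence $\langle\gamma_XY,Z\rangle=0$ for all $X$, i.e.\ $\nabla^{g}$ preserves $\mathcal{D}_0$, and, being metric, it also preserves $\mathcal{D}_1=\mathcal{D}_0^{\perp}$; the de Rham splitting then applies.

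It remains to identify the factors. On $M_2$ the Chern connection $\nabla$ of $\widetilde{M}$ restricts --- with no projection needed, since $\mathcal{D}_0$ is $\nabla$-invariant --- to a Hermitian connection whose torsion equals $T|_{\mathcal{D}_0}$, which vanishes by the definition of $\mathcal{N}_0$ together with the $(2,0)$-type of $T$; a torsion-free Hermitian connection coincides with the Levi-Civita connection, so $M_2$ is K\"ahler. For the final assertion, suppose $M_1$ admitted a K\"ahler de Rham factor $M'$, so that $\widetilde{M}=M'\times M''\times M_2$ as a product of Hermitian manifolds. Since the Chern connection of such a product is the product of the Chern connections of the factors, its torsion is the direct sum of theirs and the K\"ahler factor contributes $0$; hence $T(X,\cdot)=0$ for every $X\in T'M'$ and $T'M'\perp\operatorname{im}(T)$, so $T'M'\subset\mathcal{N}_0\subset TM_2$. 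This contradicts $T'M'\subset TM_1=\mathcal{D}_1$ unless $M'$ is a point, which proves that $M_1$ admits no K\"ahler factor.

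The step I expect to be the real obstacle is the passage from $\nabla$ to $\nabla^{g}$: because $\nabla$ is not the Levi-Civita connection once $g$ is non-K\"ahler, the $\nabla$-parallelism of $\mathcal{N}_0$ does not by itself give a de Rham-type splitting, and one must genuinely verify that the contorsion does not mix $\mathcal{D}_0$ with $\mathcal{D}_1$. That cancellation rests on both defining features of $\mathcal{N}_0$ --- that it lies in the kernel of $T$ and that it is orthogonal to the image of $T$ --- and fails if either is dropped. Everything else is standard (the de Rham theorem, uniqueness of the Chern connection, the behavior of the Chern connection under Riemannian products) or an immediate consequence of $\nabla T=0$.
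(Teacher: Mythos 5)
Your argument is correct and follows essentially the route of the proof the paper defers to (Theorem 3.6 of \cite{NZ-CAS}): one upgrades the Chern-parallelism of $\mathcal{N}_0\oplus\overline{\mathcal{N}_0}$ to Levi-Civita parallelism via the contorsion, using exactly the two facts you isolate --- that $\mathcal{N}_0$ lies in the kernel of $T$ and is orthogonal to its image --- and then invokes the Riemannian de Rham theorem on the complete, simply connected cover. The only point deserving one more line is that $J$-invariance of the two parallel distributions does not by itself make the Riemannian product a product of Hermitian manifolds; however, your contorsion computation in fact gives $\gamma_X=0$ for $X\in\mathcal{D}_0$ and $\gamma_{\bullet}Y=0$ for $Y\in\mathcal{D}_0$, which combined with $\nabla J=0$ shows that $\nabla^{g}J$ vanishes in all the directions needed for $J$ to split as a product complex structure.
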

\begin{proof} This is essentially Theorem 3.6 of \cite{NZ-CAS}.
\end{proof}

By combining  Corollary \ref{coro:21}, Propositions \ref{prop:31}, \ref{prop:32}, we get the proof of our main result, Theorem \ref{thm:Class}:

\begin{proof}[{\bf Proof of Theorem \ref{thm:Class}.}]  First split the manifold $\widetilde{M}$ into two factors as in the statement. By the proof of Lemma 2.2 in \cite{Ni-21}, the splitting of the holonomy action implies that splitting of $\Rm_{x, y}$. Applying Corollary \ref{coro:21}, Propositions \ref{prop:31}, \ref{prop:32} to the irreducible $G$-invariant subbundle of $T'M_1$ we conclude that the Chern curvature is flat on each irreducible summand, hence  is totally flat. Now since $M_1$ has zero curvature and parallel torsion, we may find global parallel vector fields $X_1, \cdots, X_k$ with $k=\dim_{\mathbb{C}}(M_1)$ (which are holomorphic) such that
$$
[X_i, X_j]=T(X_i, X_j)=T_{ij}^k X_k.
$$
The fact that the torsion is parallel implies that $\{T_{ij}^k\}$ are constants. By Lemma 3.1 of \cite{NZ-CAS}, they also satisfy the Jacobi identity in general.   Now we can appeal the result of Cartan \cite{Cartan} (pp. 188-192) to assert that there is a complex Lie group structure on $M_1$. For a modern treatment of the existence of a complex Lie group structure one can see \cite{DFN} pages 20-25, Section 3.1 of Ch1 for the existence of Lie group structure and \cite{Mat}, Theorem on page 212 of Section 11 of Ch IV for details on the existence of complex Lie group structure.
\end{proof}

\begin{proof}[{\bf Proof of Corollary \ref{coro:AK}.}] By Theorem \ref{thm:Class}, the universal cover of $M$ is the product of a complex Lie group $M_1$ with $M_2$ which is products of irreducible simply-connected Hermitian symmetric spaces, namely $M_2=\mathbb{C}^{n_1}\times N_2\times \cdots \times N_\ell$ with $N_i$ being a non-flat irreducible simply-connected Hermitian symmetric space. However, the non-flat factor must be of semi-simple type hence have its Ricci curvature being a nonzero factor of the K\"ahler metric. Then the assumption implies that $\widetilde{M}=M_1\times \mathbb{C}^{n_1}$, which is a complex Lie group itself.
\end{proof}

The corollary shows that $T'M_1$ may miss some complex Lie group factors. To capture it let $\mathcal{F}_p=\{ Z\in T^{1, 0}_pM\, |\, h (Z)=Z, \forall \,h\in G_p\}$, where $G$ is the holonomy group of the Chern connection at $p$. Recall the following result from \cite{NZ-CAS}.
\begin{proposition} Let $(M, g)$ be a Hermitian manifold with a  CAS structure.
Let $\mathcal{F}=\cup_{x} \mathcal{F}_x$ be the sub-bundle of $T^{1, 0}M$. Then $\mathcal{F}$ is a holomorphic integrable foliation. Moreover, if $Z_1, \cdots, Z_r$ is a parallel frame of $\mathcal{F}$, then
$$
[Z_i, Z_j]=c^k_{ij} Z_k
$$
for some constant $c^k_{ij}$. In particular, when $M$ is simply-connected, there exists a complex Lie group $F$ acting almost freely, holomorphically on $M$ such that $T^{1, 0}_x(F\cdot x) =\mathcal{F}_x$.
\end{proposition}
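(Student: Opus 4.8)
The plan is to prove, in order, that $\mathcal{F}$ is a $\nabla$-parallel holomorphic subbundle of $T^{1,0}M$, that it is closed under the Lie bracket (hence an integrable holomorphic foliation), and finally, when $M$ is simply-connected, to integrate the resulting Lie algebra of holomorphic vector fields to a holomorphic $F$-action. For the first point, note that by Theorem~\ref{thm:AS} the holonomy groups $G_q$ at different points are conjugate through parallel transport, so $\dim\mathcal{F}_p$ is independent of $p$; and if $Z\in\mathcal{F}_p$ and $\gamma$ joins $p$ to $q$, then for every $h\in G_q$ the element $\gamma^{-1}h\gamma$ lies in $G_p$ and fixes $Z$, whence $h(\gamma Z)=\gamma(\gamma^{-1}h\gamma\,Z)=\gamma Z$, i.e. $\gamma Z\in\mathcal{F}_q$. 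Thus $\mathcal{F}$ is a $\nabla$-parallel subbundle, and the connection it inherits is flat, since $R_{x,y}|_{\mathcal{F}}$ lies in the holonomy algebra $\mathfrak{g}$ (by Theorem~\ref{thm:AS} and $\nabla R=0$) and $\mathfrak{g}$ annihilates $\mathcal{F}$ because $G$ fixes $\mathcal{F}$ pointwise. Consequently $\mathcal{F}$ admits local $\nabla$-parallel frames, which are automatically holomorphic since the $(0,1)$-part of $\nabla$ on $T^{1,0}M$ is $\bar\partial$; in particular $\mathcal{F}$ is a holomorphic subbundle.

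For integrability, take a local parallel holomorphic frame $Z_1,\dots,Z_r$ of $\mathcal{F}$. Then
\[
[Z_i,Z_j]=\nabla_{Z_i}Z_j-\nabla_{Z_j}Z_i-T(Z_i,Z_j)=-T(Z_i,Z_j),
\]
and since $\nabla T=0$ and $\nabla Z_i=\nabla Z_j=0$ we get $\nabla\big(T(Z_i,Z_j)\big)=0$, so $T(Z_i,Z_j)$ is a parallel section of $T^{1,0}M$. A parallel section is fixed by the holonomy, hence lies in $\mathcal{F}$; writing $T(Z_i,Z_j)=c^k_{ij}Z_k$ and applying $\nabla$ shows the $c^k_{ij}$ are locally constant. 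Hence $[\mathcal{F},\mathcal{F}]\subseteq\mathcal{F}$ on sections, so $\mathcal{F}$ is an integrable holomorphic foliation by Frobenius, and the Jacobi identity for the $c^k_{ij}$ holds automatically as the Jacobi identity for the vector-field bracket $[[Z_i,Z_j],Z_k]$ written through constant structure constants (alternatively via Lemma~3.1 of \cite{NZ-CAS} applied to the parallel torsion on the flat bundle $\mathcal{F}$).

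Finally, assume $M$ simply-connected, so that the flat bundle $\mathcal{F}$ has a \emph{global} parallel holomorphic frame $Z_1,\dots,Z_r$, spanning a finite-dimensional complex Lie algebra $\mathfrak{f}$ of holomorphic vector fields with structure constants $c^k_{ij}$. Each $Z_i$ has constant pointwise norm (because $\nabla g=0$), so the underlying real vector fields are bounded, hence complete on the complete manifold $(M,g)$; therefore $\mathfrak{f}$ integrates to a holomorphic action of the simply-connected complex Lie group $F$ with $\operatorname{Lie}(F)=\mathfrak{f}$, and its orbit through $x$ has tangent space $\mathcal{F}_x$. The action is locally free (almost free): if $\xi=\sum_i a^i Z_i\in\mathfrak{f}$ vanished at a point, linear independence of the $Z_i$ would force all $a^i=0$, so every nonzero element of $\mathfrak{f}$ is nowhere-vanishing, the isotropy subalgebras are trivial, and the stabilizers discrete.

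The genuine obstacle is the analytic integration step in the last paragraph: one must verify completeness of the generating fields — which is exactly where $\nabla g=0$ and completeness of $(M,g)$ enter — and upgrade the local flows to a global holomorphic $F$-action; the rest reduces to the observation that $[Z_i,Z_j]=-T(Z_i,Z_j)$ and that $\nabla T=0$ keeps $T(Z_i,Z_j)$ parallel, hence inside $\mathcal{F}$, together with standard Frobenius and Lie-theoretic facts.
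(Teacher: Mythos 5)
Your argument is correct and is essentially the proof the paper has in mind — it is the same mechanism run in the proof of Theorem \ref{thm:Class} for the factor $M_1$: holonomy-invariance of the fixed-point set under parallel transport, flatness of the induced connection on $\mathcal{F}$ (so parallel, hence holomorphic, frames exist), the identity $[Z_i,Z_j]=\pm T(Z_i,Z_j)$ with $T(Z_i,Z_j)$ parallel and therefore in $\mathcal{F}$ with constant coefficients, and finally integration of the resulting complete holomorphic vector fields (Palais). Two small remarks: the integration step genuinely requires completeness of $(M,g)$, which you rightly invoke even though the statement omits the word ``complete'' (it is a standing hypothesis throughout the paper); and the cleanest way to see $T(Z_i,Z_j)\in\mathcal{F}$ — avoiding any local-versus-global issue with parallel sections on a non-simply-connected $M$ — is pointwise, via $h\bigl(T(Z,W)\bigr)=T(hZ,hW)=T(Z,W)$ for all $h\in G_p$, using that the $\nabla$-parallel tensor $T$ is invariant under the full holonomy group.
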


Theorem \ref{thm:Class} implies that on $\widetilde{M}$, $\mathcal{F}=T' M_1\oplus \mathbb{C}^{n_1}$.
The main reason that our argument works is that for a manifold with CAS structure, at the curvature level, algebraically it is the same as a locally symmetric space.

\vspace{0.1cm}

We conclude this section by a discussion on Remark \ref{remark14}, which says that the naive way of generalizing AK type theorem from Levi-Civita conneciton to Chern connection would fail, namely, there are examples of compact locally homogeneous Hermitian manifold whose Chern connection has vanishing (third) Ricci curvature but is not Chern flat. This illustrates that the  CAS assumption in Corollary \ref{coro:AK} is necessary. We will give two such examples below.

The first  example is some special type of {\em almost abelian manifolds,} namely,  compact quotients of $(G,J,g)$, where $G$ is an even-dimensional, connected and simply-connected, unimodular  Lie group, $J$ a left-invariant (integrable) complex structure on $G$ and $g$ a left-invariant metric on $G$ compatible with $J$. Let ${\mathfrak g}$ denote the Lie algebra of $G$. $G$ is said to be {\em almost abelian} if ${\mathfrak g}$ contains an abelian ideal  ${\mathfrak a}$ of codimension $1$. The Hermitian structures on almost abelian Lie algebras were studied by a number of authors, and we refer the readers to \cite{AL, Bock, FinoP21, FinoP23, LW} and the references therein for more information on this. We will follow the computation in \S 3 of \cite{GZ} to serve our purpose here.

A unitary basis $\{ e_1, \ldots , e_n\}$ of ${\mathfrak g}^{1,0}=\{ x-iJx \mid x \in {\mathfrak g}\}$ is called {\em admissible} if ${\mathfrak a}$ is spanned by $e_j+\overline{e}_j$, $i(e_j-\overline{e}_j)$ for $2\leq j\leq n$ and $i(e_1-\overline{e}_1)$. The structural constants $C$ and $D$ are defined by
$$ [e_i, e_j] = \sum_k C^k_{ij} e_k, \ \ \ \ \ \ [e_i, \overline{e}_j] = \sum_k \big( \overline{D^i_{kj}}\, e_k - D^j_{ki} \,\overline{e}_k \big) . $$
When $e$ is admissible, the only possibly non-zero components of $C$ and $D$ are
$$ D^1_{11}=\lambda \in {\mathbb R}, \ \ \ \ D^1_{i1}=v_i \in {\mathbb C}, \ \ \ \ D^j_{i1} =A_{ij}, \ \ \ \ C^j_{1i}=-\overline{A_{ji} }, \  \ \ \ \forall \ 2\leq i,j\leq n. $$
From the calculation in \S 3 of \cite{GZ}, we know that for the almost abelian Lie group $(G,J,g)$,
\begin{itemize}
\item $G$ is unimodular \ \ $\Longleftrightarrow$ \ \ $\lambda + \mbox{tr}(A) + \overline{\mbox{tr}(A)} =0$;
\item  $g$ is Chern flat  \ \ $\Longleftrightarrow$ \ \ $\lambda =0$, $v=0$, and $[A, A^{\ast}]=0$;
\item $g$ has vanishing third Chern Ricci \ \ $\Longleftrightarrow$ \ \ $\lambda =0$, $v=0$, and $\mbox{tr}(A) + \overline{\mbox{tr}(A)} =0$.
\end{itemize}
Here $A^{\ast}$ stands for the conjugate transpose of $A$. With these notations and set-ups, our first example goes like the following:

{\em For any $n\geq 3$, if we choose the $(n-1)\times (n-1)$ matrix $A$ so that $\mbox{tr}(A) + \overline{\mbox{tr}(A)} =0$ but $[A, A^{\ast}]\neq 0$, then the metric $g$ would have vanishing third Chern Ricci but is not Chern flat.} (Note that the first Chern Ricci vanishes here, but the second Ricci does not vanish).

\vspace{0.15cm}

Another (simpler) example is the (compact quotients of) nilpotent Lie groups with a left-invariant Hermitian structure that is balanced and with parallel Bismut torsion. Denote by $G$ the Lie group and ${\mathfrak g}$ its Lie algebra. Under some unitary basis $e$ of ${\mathfrak g}^{1,0}$  the structure equation takes the form:
\begin{equation}
 \left\{ \begin{split}
d\varphi_i\,=\,0, \ \ \ \ \  \forall \ 1\leq i\leq r; \hspace{2.6cm} \\
d\varphi_{\alpha}=\sum_{i=1}^r Y_{\alpha i} \,\varphi_i \wedge \overline{\varphi}_i, \ \ \ \ \forall \ r+1\leq \alpha \leq n, \end{split} \right.
\end{equation}
where $2n$ is the real dimension of $G$, $1\leq r< n$, $\varphi$ is the coframe dual to $e$, and $Y_{\alpha i}$ are arbitrary complex constants such that $\sum_{i=1}^r Y_{\alpha i}=0$ for each $\alpha$.

{\em When these $Y_{\alpha i}$ are not all zero, the Chern connection is not flat. On the other hand, the metric is balanced,  the first and third Chern Ricci both vanish} (but the second Chern Ricci is not identically zero).

\vspace{0.15cm}

To conclude the discussion of this section, let us remark that we do not know any example which will serve as an negative answer to the following question:

\begin{question}
Let $(M,g)$ be a compact locally homogeneous Hermitian manifold. If the first, second, and third Chern Ricci all vanish, then must it be Chern flat?
\end{question}

We do not even know the answer to this question for Lie-Hermitian manifolds, namely when the universal cover of $M$ is a Lie group (not necessarily a complex Lie group) equipped with left-invariant complex structure and left-invariant metric.

\vspace{0.3cm}

\section{Generalized symmetric holonomy systems}
In this section we consider the symmetric holonomy system for an affine connection with a torsion. Suppose now that we have a metric connection $\nabla$ with torsion. For Hermitian manifold we also require that it satisfies $\nabla J=0$, where $J$ is the almost complex structure.
For the situation that $\nabla$ is Ambrose-Singer (or invariant under the parallelism), we have that $g(R)=R$ and $g(T)=T$ for any element $g$ belonging to the restricted holonomy. Not that $T: \wedge^2 T_pM\to T_pM$ for any given $p\in M$.

A generalized symmetric holonomy system is   $S=\{V, \Rm, T, G\}$, which   consists of,   a Euclidean space $V$ of dimension $n$ (we call it the degree of $S$)  endowed with an inner product $\langle \cdot, \cdot\rangle$ (also denoted by a bilinear form $H$),  a connected compact subgroup $G$ of $\mathsf{SO}(\ell)$,   an algebraic curvature operator $\Rm:\wedge^2 V \to \wedge^2 V$ as a linear map, and a linear map $T:\wedge^2 V \to V$, together satisfying
\begin{eqnarray}
&\,& \Rm_{x, y}\in \mathfrak{g}; \label{eq:holo0}\\
&\,& {\mathfrak S} \{ \Rm_{x, y} z+ T(x, T(y, z))\}=0;\label{eq:B1}\\
&\,& {\mathfrak S}\{ \Rm_{x, T(y, z)}\}=0; \label{eq:B2}\\
&\,& g(\Rm)=\Rm, g(T)=T, \forall \, g\in G, \mbox{ or  equivalently } A(T)=A(\Rm)=0, \forall A \in \mathfrak{g}. \label{eq:GS}
\end{eqnarray}
Here $\mathfrak{g}$ is the Lie algebra of $G$, ${\mathfrak S}$ is the cyclic permutation operation on the positions of $x, y, z$, the actions of $A$ on $T$ and $\Rm$ are the ones induced as derivations. Namely, $A(T)(x, y)=AT(x, y)-T(Ax, y)-T(x, Ay)$ and $(A(\Rm))_{x, y}=A \cdot \Rm_{x, y}-\Rm_{Ax, y}-\Rm_{x, Ay}-\Rm_{x, y}\cdot A.$

The following result, which is an immediate consequence of (\ref{eq:B1}), (\ref{eq:B2}) and (\ref{eq:GS}),  is due to Nomizu \cite{Nomizu}.

\begin{theorem}[Nomizu]\label{thm:Nomizu} Let $\mathfrak{J}=V\oplus \mathfrak{g}$. Define a Lie algebra structure on $\mathfrak{J}$ by letting
$$
[A, A']\doteqdot [A, A']; \quad [x, y]\doteqdot- \Rm_{x, y}\oplus (-T(x, y)); \quad [A, x]\doteqdot A(x),  \forall A, A' \in \mathfrak{g},\,  x, y\in V.
$$
Then $\mathfrak{J}$ is a Lie algebra.
\end{theorem}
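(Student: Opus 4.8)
The plan is to verify that the bracket on $\mathfrak{J}=V\oplus\mathfrak{g}$ is bilinear, antisymmetric, and satisfies the Jacobi identity. Bilinearity and antisymmetry are immediate from the definition: $\Rm$ and $T$ are maps out of $\wedge^2V$ (hence antisymmetric in $x,y$), and the bracket on $\mathfrak{g}$ is already a Lie bracket; note that condition (\ref{eq:holo0}), $\Rm_{x,y}\in\mathfrak{g}$, is what guarantees that $[x,y]=-\Rm_{x,y}\oplus(-T(x,y))$ actually lands in $\mathfrak{J}$, and it will be used implicitly throughout. The whole content is therefore the Jacobi identity $[[u,v],w]+[[v,w],u]+[[w,u],v]=0$, and since the bracket is trilinear it is enough to verify it on triples whose entries each lie in $V$ or in $\mathfrak{g}$; this yields four cases, organized by the number of entries taken from $\mathfrak{g}$. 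The two extreme cases are formal: if all three entries lie in $\mathfrak{g}$ the identity is the Jacobi identity of $\mathfrak{g}$, and if two entries lie in $\mathfrak{g}$ and one (say $x$) in $V$ the identity reduces to $[A,A'](x)=A(A'(x))-A'(A(x))$, i.e.\ to the fact that $\mathfrak{g}\hookrightarrow\mathfrak{gl}(V)$ is a homomorphism, which holds because $[A,A']$ and $[A,x]=A(x)$ were defined precisely so that it does.

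The next case, one entry $A\in\mathfrak{g}$ and two entries $x,y\in V$, is where the invariance hypothesis (\ref{eq:GS}) enters. Expanding the cyclic sum $[[A,x],y]+[[x,y],A]+[[y,A],x]$ and splitting it along $\mathfrak{J}=V\oplus\mathfrak{g}$ (using antisymmetry of $\Rm$ and $T$ to merge the two terms that come from $V$-entries), the $V$-component collapses to $A(T(x,y))-T(Ax,y)-T(x,Ay)=(A(T))(x,y)$, which vanishes by (\ref{eq:GS}); the $\mathfrak{g}$-component collapses to $-\Rm_{Ax,y}-\Rm_{x,Ay}-[\Rm_{x,y},A]$, which is exactly $(A(\Rm))_{x,y}$ written out, and so also vanishes by (\ref{eq:GS}).

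The remaining case, all three entries $x,y,z\in V$, is the heart of the argument and the step I expect to be the main (bookkeeping) obstacle, since $[x,y]$ carries both a $\mathfrak{g}$-part $-\Rm_{x,y}$ and a $V$-part $-T(x,y)$, and bracketing the $V$-part again produces a curvature term together with an iterated-torsion term, so the signs must be tracked carefully: one finds $[[x,y],z]=\Rm_{T(x,y),z}\oplus\big({-}\Rm_{x,y}(z)+T(T(x,y),z)\big)$. Taking the cyclic sum, the $\mathfrak{g}$-component is $\mathfrak{S}\{\Rm_{T(x,y),z}\}$, which equals $-\mathfrak{S}\{\Rm_{x,T(y,z)}\}$ by antisymmetry of $\Rm$ and so vanishes by (\ref{eq:B2}); the $V$-component is $\mathfrak{S}\{-\Rm_{x,y}(z)+T(T(x,y),z)\}$, and rewriting $\mathfrak{S}\{T(T(x,y),z)\}=-\mathfrak{S}\{T(x,T(y,z))\}$ via antisymmetry of $T$ turns it into $-\mathfrak{S}\{\Rm_{x,y}z+T(x,T(y,z))\}$, which vanishes by the first Bianchi identity (\ref{eq:B1}). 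Assembling the four cases finishes the proof; this is the torsion-ful analogue of the computation already carried out in the proof of Proposition \ref{prop:32}, where $T=0$ and only (\ref{eq:holo0}), the first Bianchi identity, and $A(\Rm)=0$ were needed.
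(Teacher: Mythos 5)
Your proof is correct and is exactly the verification the paper has in mind: the paper states the result as "an immediate consequence of (\ref{eq:B1}), (\ref{eq:B2}) and (\ref{eq:GS})" without writing out the cases, and your case-by-case check of the Jacobi identity (with the sign bookkeeping in the all-$V$ case handled correctly via antisymmetry of $\Rm$ and $T$) is precisely that consequence spelled out.
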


From the definitions in the theorem one can easily compute the Lie bracket of $[x\oplus A, y\oplus A']$.
Let $B'$  still denote the Killing form of the Lie algebra $\mathfrak{J}$, which is a bilinear form. We also denote the bilinear form $-2\langle\cdot, \cdot\rangle+K(\cdot, \cdot)$ on $\mathfrak{g}$ by $B(\cdot, \cdot)$, which is negative definite. Here $K$ is the Killing form of $\mathfrak{g}$ which is nonnegative due to the compactness of $G$. The results in Section 3 mostly no longer hold for the generalized symmetric holonomy system. Recall that we endow $\mathfrak{J}$ with metrics from $\mathfrak{g}\subset \mathfrak{so}(n)$ and $V$. However we still have the following result.

\begin{lemma}\label{lm:killing1} The $B'|_{\mathfrak{g}}$ is the same as $B$, namely
$B'|_{\mathfrak{g}}=B$. In particular $B'|_{\mathfrak{g}}$ is negative definite. If we assume additionally that $V$ is irreducible (namely as a $G$-module), then $B'|_{V}(\cdot, \cdot)=\lambda H(\cdot, \cdot)$.
\end{lemma}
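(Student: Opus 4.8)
The plan is to compute the Killing form $B'$ of $\mathfrak{J}=V\oplus\mathfrak{g}$ directly on each of the pieces $\mathfrak{g}\times\mathfrak{g}$ and $V\times V$, exactly mirroring Claims 1 and 3 in the proof of Proposition \ref{prop:32}, and to check that the only place where the torsion $T$ enters produces no obstruction. First I would fix an orthonormal frame $\{e_i\}$ of $V$ and $\{A_\alpha\}$ of $\mathfrak{g}$ and, for $A,A'\in\mathfrak{g}$, write $B'(A,A')=\operatorname{trace}(\ad_A\cdot\ad_{A'})=\sum_i\langle\ad_A\ad_{A'}(e_i),e_i\rangle+\sum_\alpha\langle\ad_A\ad_{A'}(A_\alpha),A_\alpha\rangle$. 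The second sum is just $K(A,A')$ by definition. For the first sum, note that $\ad_{A'}(e_i)=A'(e_i)\in V$ (no $\mathfrak{g}$-component appears when bracketing $\mathfrak{g}$ with $V$), and then $\ad_A(A'(e_i))=A(A'(e_i))\in V$ again; so the $V$-contribution is $\sum_i\langle A(A'(e_i)),e_i\rangle=\operatorname{trace}(A A')=-2\langle A,A'\rangle$ with the chosen normalization. Hence $B'|_{\mathfrak{g}}=K-2\langle\cdot,\cdot\rangle=B$, which is negative definite because $K\le 0$ (compactness of $G$) — wait, $K$ is the Killing form of a compact Lie algebra so $K\le 0$, making $B=K-2\langle\cdot,\cdot\rangle$ negative definite; this is the content the lemma asserts. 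Crucially, the torsion $T$ never appeared in this computation since $[x,y]$ has a $V$-component $-T(x,y)$ but we only ever bracketed $\mathfrak{g}$ with $\mathfrak{g}$ or $\mathfrak{g}$ with $V$, never $V$ with $V$.

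For the second assertion, I would argue that $B'|_V$ is a symmetric bilinear form on $V$ that is invariant under $\ad_{\mathfrak{g}}$: for $A\in\mathfrak{g}$ and $x,y\in V$, invariance of the Killing form gives $B'(\ad_A x,y)+B'(x,\ad_A y)=0$, i.e. $B'(A(x),y)+B'(x,A(y))=0$, so $B'|_V$ is infinitesimally $G$-invariant, hence $G$-invariant. Since $V$ is irreducible as a $G$-module, Schur's lemma (in the form that says an invariant symmetric bilinear form on an irreducible real module is a scalar multiple of any fixed invariant inner product) forces $B'|_V=\lambda H$ for some real constant $\lambda$. This step is essentially verbatim from Claim 3 of Proposition \ref{prop:32} and carries over without change, because the irreducibility hypothesis is explicitly assumed.

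The main obstacle — or rather, the thing one must be careful about — is that unlike the torsion-free case, one does not get to conclude $\lambda\ne 0$ for free: the mixed term and the $V\times V$ term of $B'$ now involve $T$ as well as $\Rm$, so the dichotomy "$\lambda=0\Rightarrow\Rm=0$" from Proposition \ref{prop:32} breaks. But the present lemma claims only $B'|_V=\lambda H$, not $\lambda\ne 0$, so this is not actually needed here; I would simply flag that the sign/nonvanishing of $\lambda$ is where the generalized theory departs from the classical one (and is dealt with separately in the sequel). One small point to verify along the way, to be safe, is that $B'(\mathfrak{g},V)$ need not be computed for this lemma (it is not claimed), so I would not pursue the analogue of Claim 2 unless it is used later. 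Thus the proof is short: two direct trace computations plus an application of Schur's lemma, with the only conceptual remark being that $T$ is invisible to both computations because neither involves a bracket of two elements of $V$.
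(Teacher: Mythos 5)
Your proposal is correct and follows essentially the same route as the paper: the same splitting of $\operatorname{trace}(\ad_{C_1}\ad_{C_2})$ into the $V$-trace (giving $-2\langle C_1,C_2\rangle$) and the $\mathfrak{g}$-trace (giving $K$), followed by $\ad$-invariance of the Killing form plus Schur's lemma for $B'|_V=\lambda H$. Your observation that the torsion never enters because no bracket of two elements of $V$ is taken is exactly the reason the computation from Proposition \ref{prop:32} carries over verbatim.
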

\begin{proof} Direct calculation shows,  for an orthonormal frame $\{e_i\}_{i=1}^{n}$ of $V$  and an orthonormal frame  $\{A_\alpha\}_{\alpha=1}^{\dim(\mathfrak{g})}$  of $\mathfrak{g}$, that
\begin{eqnarray*}
B'(C_1, C_2)&=&\operatorname{trace} (\ad_{C_1} \cdot \ad_{C_2})\\
&=&\sum_{i=1}^n \langle \ad_{C_1} \cdot \ad_{C_2} (e_i), e_i\rangle +\sum_{\alpha} \langle \ad_{C_1} \cdot \ad_{C_2} (A_\alpha), A_\alpha\rangle\\
&=&-\sum_{i=1}^n \langle C_2(e_i),  C_1(e_i)\rangle+K(C_1, C_2)\\
&=&B(C_1, C_2).
\end{eqnarray*}
Here $C_1, C_2\in \mathfrak{g}$ are two arbitrary elements. This proves the claimed identity. The last claimed follows from Schur's lemma.
\end{proof}
For $C\in \mathfrak{g}$, $x\in V$,  $B'(C, x)$  only vanishes under additional conditions.

\begin{lemma}\label{lm:killing2}
$B'(C, x)=2\langle T(x, \cdot), C\rangle.$ Here we view $T(x, \cdot)$ as an endomorphism of $V$. In particular, for a Ambrose-Singer connection  it vanishes if $\Rm_{x,y}\cdot T=0$.
\end{lemma}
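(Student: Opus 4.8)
The plan is to compute $B'(C,x)$ directly from the definition of the Killing form of $\mathfrak{J}$, exactly as in the proof of Lemma \ref{lm:killing1}, but now keeping track of the extra torsion term that was absent in the Chern (torsion-free) case. Choosing an orthonormal frame $\{e_i\}$ of $V$ and $\{A_\alpha\}$ of $\mathfrak{g}$, I would write
\begin{equation*}
B'(C,x)=\operatorname{trace}(\ad_C\cdot\ad_x)=\sum_i\langle \ad_C\ad_x(e_i),e_i\rangle+\sum_\alpha\langle \ad_C\ad_x(A_\alpha),A_\alpha\rangle.
\end{equation*}
For the second sum, $\ad_x(A_\alpha)=[x,A_\alpha]=-[A_\alpha,x]=-A_\alpha(x)\in V$, and then $\ad_C$ of an element of $V$ lands in $V\oplus\mathfrak{g}$; pairing with $A_\alpha\in\mathfrak{g}$ picks out only the $\mathfrak{g}$-component, which is $-\Rm_{C(\cdot),\cdot}$ type — but here the argument is $-A_\alpha(x)\in V$ and $C\in\mathfrak{g}$, so $\ad_C(-A_\alpha(x))=[C,-A_\alpha(x)]=-C(A_\alpha(x))\in V$, which pairs to zero against $A_\alpha$. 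So the second sum vanishes, just as in Claim 2 of Proposition \ref{prop:32}.

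The content is therefore in the first sum. Here $\ad_x(e_i)=[x,e_i]=-\Rm_{x,e_i}\oplus(-T(x,e_i))$, an element of $\mathfrak{g}\oplus V$. Applying $\ad_C=\ad_C$ (with $C\in\mathfrak{g}$): on the $\mathfrak{g}$-part $-\Rm_{x,e_i}$ we get $[C,-\Rm_{x,e_i}]\in\mathfrak{g}$, which pairs to zero against $e_i\in V$; on the $V$-part $-T(x,e_i)$ we get $[C,-T(x,e_i)]=-C(T(x,e_i))\in V$. Hence
\begin{equation*}
B'(C,x)=\sum_i\langle -C(T(x,e_i)),e_i\rangle=-\sum_i\langle C(T(x,e_i)),e_i\rangle=\sum_i\langle T(x,e_i),C(e_i)\rangle,
\end{equation*}
using skew-symmetry of $C\in\mathfrak{so}(n)$. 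Recognizing the right-hand side: if we regard $T(x,\cdot)$ as the endomorphism $e_i\mapsto T(x,e_i)$ of $V$, then $\sum_i\langle T(x,e_i),C(e_i)\rangle=2\langle T(x,\cdot),C\rangle$ in the inner product on $\mathfrak{gl}(V)$ normalized as $\langle A,B\rangle=\tfrac12\sum_i\langle A(e_i),B(e_i)\rangle$ used throughout Section 3. This gives the claimed formula $B'(C,x)=2\langle T(x,\cdot),C\rangle$.

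For the last sentence, I would argue that $\Rm_{x,y}\cdot T=0$ for all $x,y$ forces $B'(C,x)=0$. The idea: $C$ ranges over $\mathfrak{g}$, which by the Ambrose--Singer holonomy theorem (Theorem \ref{thm:AS}) is generated by the curvature operators $\Rm_{y,z}$ together with their iterated brackets; since $B'(\cdot,x)$ is linear in $C$ it suffices to check it vanishes on a generating set, and in fact since $B'$ is $\ad_{\mathfrak J}$-invariant it suffices to treat $C=\Rm_{y,z}$. Then $2\langle T(x,\cdot),\Rm_{y,z}\rangle=\sum_i\langle T(x,e_i),\Rm_{y,z}(e_i)\rangle=-\sum_i\langle \Rm_{y,z}(T(x,e_i)),e_i\rangle$, and by hypothesis $\Rm_{y,z}\cdot T=0$ means $\Rm_{y,z}(T(x,e_i))=T(\Rm_{y,z}x,e_i)+T(x,\Rm_{y,z}e_i)$; summing against $e_i$ and using that $T(x,\cdot)$ and $T(\Rm_{y,z}x,\cdot)$ are themselves skew endomorphisms one recombines the terms and sees the sum telescopes to zero. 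The main obstacle I anticipate is exactly this last manipulation — making the bookkeeping of the derivation identity $\Rm_{y,z}\cdot T=0$ interact correctly with the trace pairing, and being careful about whether ``$\Rm_{x,y}\cdot T=0$'' is meant as the derivation action $\Rm_{x,y}(T)=0$ (which is what makes the argument run) versus some weaker contraction; the first two computations are routine once the conventions from Section 3 are imported verbatim.
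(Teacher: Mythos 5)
Your computation of the main identity is correct and is essentially the paper's own proof: you expand $B'(C,x)=\operatorname{trace}(\ad_C\ad_x)$ over orthonormal frames of $V$ and $\mathfrak{g}$, observe that the cross terms ($[C,\Rm_{x,e_i}]\in\mathfrak{g}$ paired against $e_i\in V$, and $C(A_\alpha(x))\in V$ paired against $A_\alpha\in\mathfrak{g}$) die because the sum $\mathfrak{J}=\mathfrak{g}\oplus V$ is orthogonal, and use skew-symmetry of $C$ to land on $\sum_i\langle T(x,e_i),C(e_i)\rangle=2\langle T(x,\cdot),C\rangle$. No issues there.

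The ``in particular'' part, however, has a genuine gap, and it is exactly at the spot you flagged. You chose the derivation reading of $\Rm_{x,y}\cdot T=0$, but that reading cannot be what is meant: by (\ref{eq:GS}) the derivation identity $A(T)=0$ holds \emph{automatically} for every $A\in\mathfrak{g}$ in any generalized symmetric holonomy system, so it would be a vacuous hypothesis and the conclusion $B'(C,x)=0$ would then have to hold unconditionally, which it does not. Moreover the ``telescoping'' you hope for does not occur: writing $A=\Rm_{y,z}$ and $S$ for the matrix of $T(x,\cdot)$, substituting $A(T(x,e_i))=T(Ax,e_i)+T(x,Ae_i)$ into $-\sum_i\langle A(T(x,e_i)),e_i\rangle$ produces $-\operatorname{trace}(T(Ax,\cdot))-\operatorname{trace}(AS)$, and comparing with the left-hand side $-\operatorname{trace}(AS)$ yields only the tautology $\operatorname{trace}(T(Ax,\cdot))=0$; it gives no control on $\operatorname{trace}(AS)$ itself. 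The intended meaning — made explicit in the paper's phrase ``$\Rm|_{\operatorname{image}(T)}=0$ if $\Rm_{x,y}\cdot T=0$ and $R_{x,T(y,z)}=0$'' — is the \emph{composition}: $\Rm_{y,z}$ annihilates every vector in the image of $T$. With that reading the conclusion is immediate from your own formula: since $\mathfrak{g}$ is spanned by the operators $\Rm_{y,z}$ (Ambrose--Singer together with Lemma 4 of \cite{Kostant-Nagoya}), it suffices to take $C=\Rm_{y,z}$, and then
\begin{equation*}
2\langle T(x,\cdot),\Rm_{y,z}\rangle=\sum_i\langle T(x,e_i),\Rm_{y,z}(e_i)\rangle=-\sum_i\langle \Rm_{y,z}\big(T(x,e_i)\big),e_i\rangle=0
\end{equation*}
termwise, with no telescoping needed. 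So replace the last paragraph of your argument by this one-line observation under the correct reading of the hypothesis.
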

\begin{proof} By Ambrose-Singer theorem and its specialization, Lemma 4 of \cite{Kostant-Nagoya} we have that the holonomy algebra $\mathfrak{g}$ is generated by $\sum_j \Rm_{x_j, y_j}$ for some $x_j, y_j\in V$. Assume that we have
$B'(C, x)=2\langle T(x, \cdot), C\rangle$. Writing $C=\sum_j \Rm_{x_j, y_j}$ we have that
\begin{eqnarray*}
B'(C, x)&=&\sum_{i=1}^n \sum_j \langle T(x, e_i), \Rm_{x_j, y_j} e_i\rangle\\
&=&-\sum_{i=1}^n \sum_j \langle\Rm_{x_j, y_j}T(x, e_i), e_i\rangle \\
&=&0.
\end{eqnarray*}
This proves the second statement assuming the first. Now we prove the first part, namely the claimed identity in the lemma. Direct calculation shows that
\begin{eqnarray*}
B'(C, x)&=& \sum_i \langle \ad_C \ad_{x}(e_i), e_i\rangle +\sum_{\alpha} \langle \ad_C \ad_x (A_\alpha), A_\alpha\rangle\\
&=& -\sum_i \langle C(T(x, e_i)), e_i\rangle -\langle [C, \Rm_{x,e_i}], e_i\rangle -\sum_{\alpha} \langle C(A_\alpha(x)), A_\alpha\rangle \\
&=& \sum_{i}\langle T(x, e_i), C(e_i)\rangle.
\end{eqnarray*}
This proves the  claimed equation.
\end{proof}

We say $\Rm|_{\operatorname{image} (T)}=0$ if $\Rm_{x, y} \cdot T=0$ and $R_{x, T(y, z)}=0$. Now we  prove the following generalization of Proposition \ref{prop:32}.

\begin{theorem}\label{thm:generlizedholo} Let $S=\{V, \Rm, T, G\}$ be a generalized symmetric  holonomy system. Assume that it is irreducible. Assume further  that $\Rm|_{\operatorname{image} (T)}=0$, and either 

(i)  $T(x, \cdot)$ is skew-symmetric, or 

(ii) (a) $V$ is Hermitian with an almost complex structure $J$ (hence $V=V^{1,0}\oplus V^{0,1}$); (b) $\Rm( \wedge^{1,1} V) \subset \wedge^{1,1} V$ and vanishes on the other components; (c) $T$ splits as  $T: V^{1,0}\times V^{1,0}\to V^{1,0}$ and $T: V^{0,1} \times V^{0,1} \to V^{0, 1}$ with all other components being zero. 

Then $\Ric_{\Rm}(x,x)=0$  implies that $\Rm=0$.
\end{theorem}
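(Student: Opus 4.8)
The plan is to imitate the proof of Proposition \ref{prop:32}, now working with Nomizu's Lie algebra $\mathfrak{J}=V\oplus\mathfrak{g}$ of Theorem \ref{thm:Nomizu} and its Killing form $B'$. First I would record the shape of $B'$: by Lemma \ref{lm:killing1}, $B'|_{\mathfrak{g}}=B$ is negative definite, while irreducibility of $V$ forces $B'|_{V}=\lambda H$ for some scalar $\lambda$. The ingredient that distinguishes our situation from a general generalized holonomy system is that $B'(\mathfrak{g},V)=0$: Lemma \ref{lm:killing2} identifies this form with $2\langle T(x,\cdot),\cdot\rangle$ on $\mathfrak{g}$, which vanishes because the assumption $\Rm|_{\operatorname{image}(T)}=0$ contains $\Rm_{x,y}\cdot T=0$. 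So $B'$ is block-diagonal relative to $V\oplus\mathfrak{g}$. The remaining half of the hypothesis, $\Rm_{x,T(y,z)}=0$ (equivalently $\Rm_{T(y,z),x}=0$), will be used to keep iterated brackets inside $V$: a direct computation with the bracket of Theorem \ref{thm:Nomizu} gives $[[x,y],z]=T(T(x,y),z)-\Rm_{x,y}(z)\in V$ and, similarly, $[x,[x,y]]\in V$.

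I would then split on whether $\lambda$ vanishes. If $\lambda=0$, take real $x,y\in V$ and compute $B'([x,y],[x,y])$ in two ways: by $\ad$-invariance it equals $-B'(y,[x,[x,y]])=-\lambda\langle y,[x,[x,y]]\rangle=0$ since $[x,[x,y]]\in V$; on the other hand, writing $[x,y]=(-T(x,y))\oplus(-\Rm_{x,y})$ and using block-diagonality, it equals $B(\Rm_{x,y},\Rm_{x,y})+\lambda|T(x,y)|^{2}=B(\Rm_{x,y},\Rm_{x,y})$. Negative definiteness of $B$ then forces $\Rm_{x,y}=0$ for all real $x,y$, i.e. $\Rm=0$.

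So assume $\lambda\neq 0$. From $B'([[x,y],z],w)=B'([x,y],[z,w])$ together with block-diagonality I obtain, for $x,y,z,w\in V$, the master identity
\begin{equation*}
-\langle\Rm_{x,y}z,w\rangle=-\langle T(T(x,y),z),w\rangle+\langle T(x,y),T(z,w)\rangle+\frac{1}{\lambda}\,B(\Rm_{x,y},\Rm_{z,w}).
\end{equation*}
In case (i) I would contract with $y=z=\varepsilon_{i}$ over a real orthonormal basis of $V$ and $w=x$, and sum. Skew-symmetry of $T(x,\cdot)$ together with skew-symmetry of $T$ in its two arguments yields $\sum_{i}\langle T(T(x,\varepsilon_{i}),\varepsilon_{i}),x\rangle=-\sum_{i}|T(x,\varepsilon_{i})|^{2}$, which exactly cancels $\sum_{i}\langle T(x,\varepsilon_{i}),T(\varepsilon_{i},x)\rangle=-\sum_{i}|T(x,\varepsilon_{i})|^{2}$; what remains is $\Ric_{\Rm}(x,x)=\frac{1}{\lambda}\sum_{i}B(\Rm_{x,\varepsilon_{i}},\Rm_{x,\varepsilon_{i}})$. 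Since $B$ is negative definite and $\lambda\neq0$, the hypothesis $\Ric_{\Rm}(x,x)=0$ forces $\Rm_{x,\varepsilon_{i}}=0$ for every $i$ and every $x$, hence $\Rm=0$. In case (ii) I would instead contract over a unitary basis $\{e_{\alpha}\}$ of $V^{1,0}$: put $x=e_{\alpha}$, $y=\overline{X}$, $z=X$, $w=\overline{e}_{\alpha}$ with $X\in V^{1,0}$, and sum over $\alpha$. The mixed-type values $T(e_{\alpha},\overline{X})$ and $T(X,\overline{e}_{\alpha})$ vanish by (ii)(c), so both torsion terms disappear and one is left with $-\Ric^{(3)}_{\Rm}(\overline{X},X)=\frac{1}{\lambda}\sum_{\alpha}B(\Rm_{e_{\alpha},\overline{X}},\Rm_{X,\overline{e}_{\alpha}})$. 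Reality of $\Rm$ gives $\Rm_{X,\overline{e}_{\alpha}}=-\overline{\Rm_{e_{\alpha},\overline{X}}}$, and since $B$ is negative definite on the real algebra $\mathfrak{g}$ one has $B(A,\overline{A})\leq 0$ with equality only for $A=0$; so the right-hand side is again a multiple of a sum of terms of one sign. As the hypothesis $\Ric_{\Rm}(x,x)=0$ amounts here, via (ii)(b), to $\Ric^{(3)}_{\Rm}=0$, we conclude $\Rm_{e_{\alpha},\overline{X}}=0$ for all $\alpha$ and $X$; with (ii)(b) killing the $(2,0)$ and $(0,2)$ parts, this gives $\Rm=0$.

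The one genuinely delicate point is the accounting of the torsion contributions in the contracted identity: in case (i) they cancel against one another because of total skew-symmetry, whereas in case (ii) the relevant mixed components of $T$ are identically zero; in both situations the master identity collapses to the same clean form ``Ricci $=$ (nonzero constant) $\times$ (sign-definite sum)'' that one meets in the torsion-free Proposition \ref{prop:32}. It is worth stressing that both halves of the assumption $\Rm|_{\operatorname{image}(T)}=0$ enter, and for different reasons — $\Rm_{x,y}\cdot T=0$ to make $B'$ block-diagonal, $\Rm_{x,T(y,z)}=0$ to keep $[[x,y],z]$ and $[x,[x,y]]$ in $V$ — and dropping either one would reintroduce terms of indeterminate sign.
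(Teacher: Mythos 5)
Your proposal is correct and follows essentially the same route as the paper: Nomizu's Lie algebra $\mathfrak{J}=V\oplus\mathfrak{g}$, the block structure of the Killing form $B'$ via Lemmas \ref{lm:killing1} and \ref{lm:killing2}, the case split on $\lambda=0$ versus $\lambda\neq 0$, and the collapse of the contracted identity to $\Ric_{\Rm}(x,x)=\frac{1}{\lambda}\sum_i B(\Rm_{x,\varepsilon_i},\Rm_{x,\varepsilon_i})$ after the torsion terms cancel. The only difference is presentational — you polarize to a four-vector ``master identity'' before contracting, where the paper works directly with $[y,[x,y]]$, and you write out case (ii) explicitly where the paper only remarks that it is similar.
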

\begin{proof} Note that $B'|_{V}(\cdot, \cdot)=\lambda H(\cdot, \cdot)$. If $\lambda=0$, then $B'|_{V}\equiv 0$.
Observe, by the assumption $\Rm|_{\operatorname{image} (T)}=0$, that
\begin{eqnarray*}
[y, [x, y]]&=&[y, -\Rm_{x, y}\oplus (-T(x, y))]\\
&=&\Rm_{y, T(x, y)}\oplus (\Rm_{x,y} y+T(y, T(x,y)))\\
&=& \Rm_{x,y} y+T(y, T(x,y))\in V.
\end{eqnarray*}
Hence from $B'([x,y], [x,y])=B'(x, [y, [x,y]])=B'(x, \Rm_{x, y}y+T(y, T(x,y))=0$ we have that
$$B'([x,y], [x,y])=0.$$
On the other hand by Lemma \ref{lm:killing2} we also have that
\begin{eqnarray*}
B'([x,y], [x,y])&=& B'(\Rm_{x,y}\oplus T(x,y), \Rm_{x,y}\oplus T(x,y))\\
&=& B'(\Rm_{x, y}, \Rm_{x, y})<0
\end{eqnarray*}
unless $\Rm_{x, y}=0$. Here we have used that Lemma \ref{lm:killing2} implies $B'((\Rm_{x,y}, T(x, y))=0$. This proves the theorem for the case $\lambda=0$.

Now we assume that $\lambda\ne 0$. The first part argument above then shows that
\begin{equation}
\langle \Rm_{x, y} y, x\rangle +\langle T(y, T(x,y)), x\rangle= \frac{1}{\lambda} B'(x, [y, [x,y]])=\frac{1}{\lambda}B'([x,y], [x,y]).
\end{equation}
Now using $\Ric_{\Rm}(x,x)=\sum \langle \Rm_{ \epsilon_i, x}x, \epsilon_i\rangle$ (with $\{\epsilon_i\}$ being an orthogonal basis of $V$) we have that, in the case (i),
\begin{eqnarray*}
\langle T(x, \epsilon_i), T(x, \epsilon_i)\rangle+\Ric_{\Rm}(x,x)&=&\langle T(\epsilon_i, T(x,\epsilon_i)), x\rangle +\Ric_{\Rm}(x,x)\\
&=&\frac{1}{\lambda}\sum_{i=1}^n B'([x,\epsilon_i], [x,\epsilon_i])\\
&=&\frac{1}{\lambda} \sum_{i=1}^n B'(\Rm_{x,\epsilon_i}\oplus T(x,\epsilon_i), \Rm_{x,\epsilon_i}\oplus T(x,\epsilon_i))\\
&=& \frac{1}{\lambda}\sum_{i=1}^n  B(\Rm_{x,\epsilon_i}, \Rm_{x,\epsilon_i})+\sum_{i=1}^n \langle T(x, \epsilon_i), T(x, \epsilon_i)\rangle.
\end{eqnarray*}
Canceling the same term on the both sides we have   $\Ric_{\Rm}(x,x)=\frac{1}{\lambda}\sum_{i=1}^n  B(\Rm_{x,\epsilon_i}, \Rm_{x,\epsilon_i})$, which implies that  $\frac{1}{\lambda}\sum_{i=1}^n  B'(\Rm_{x,\epsilon_i}, \Rm_{x,\epsilon_i})=0$ if $\Ric_{\Rm}=0$. The result then follows from Lemma \ref{lm:killing1}, namely that $B'|_{\mathfrak{g}}$ is negatively definite. This proves the result for case (i). Case (ii) is similar. This completes the proof of the theorem.
\end{proof}

\vspace{0.05cm}

\section{Bismut Ambrose-Singer manifolds}

From the discussion in the previous sections, we obtained a complete classification of compact Hermitian manifolds that is CAS, namely, whose Chern connection is Ambrose-Singer. Furthermore, we also know that Chern connection satisfies the Alekseevski\u{i}-Kimel\!\'{}\!fel\!\'{}\!d type theorem, namely, a CAS metric is flat if it is Ricci flat. It is natural to ask what happens if we replace the Chern connection $\nabla$ by the Bismut connection $\nabla^b$, or more generally by the $t$-Gauduchon connection $\nabla^{(t)}$, which is the linear combination $(1-\frac{t}{2}) \nabla + \frac{t}{2} \nabla^b$ where $t\in {\mathbb R}$.

Recall that in \S 1 we called a Hermitian metric {\em BAS} if its $\nabla^b$ is Ambrose-Singer, namely, both the torsion and curvature of $\nabla^b$ are parallel with respect to $\nabla^b$ itself. Similarly, we called a Hermitian metric {\em $t$-GAS} if its $t$-Gauduchon connection $\nabla^{(t)}$ has parallel torsion and curvature with respect to $\nabla^{(t)}$ itself. We would like to understand the set of all BAS (or more generally, all $t$-GAS) manifolds, and know in particular whether or not the Alekseevski\u{i}-Kimel\!\'{}\!fel\!\'{}\!d type theorem will hold for $\nabla^b$ (or  $\nabla^{(t)}$).

We first make some remarks on a complete simply connected BAS manifold. By $\nabla^b T^b=0$ and that $T^b$ is either the same or differs by a sign from $T$ we can have that $\nabla^{b}T=0$. Recall the tensors $S_XY:=\nabla_XY-\nabla^{LC}_X Y$ and $S^b_X Y :=\nabla^b_XY-\nabla^{LC}_X Y$ and that they can be expressed in terms of the torsions we have that $\nabla^b S^b =\nabla^b S=0$. Also observe that the metric connections are complete, we can evoke Theorem 4 of \cite{Kostant-Nagoya} to obtain a  simply-connected Lie group $G$ acting as $\nabla^b$, $\nabla$, and $\nabla^{LC}$ affine transformations on $M$ transitively, whose Lie algebra is given by $\mathfrak{J}$ defined in the last section in terms of the holonomy algebra of the Bismut connection.

For the Bismut connection $\nabla^b$ we  denote the first, second and third Ricci curvatures as $\Ric^{b(i)}$, $i=1, 2,3$. In \S 1, we raised the following (= Conjecture \ref{conj1.6a}):

\begin{conjecture} \label{conj6.1}
Suppose $(M^n,g)$ is a compact Hermitian manifold that is BAS. Assume that the first and third Bismut Ricci both vanish. Then it must be Bismut flat.
\end{conjecture}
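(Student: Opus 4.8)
We outline a strategy towards Conjecture \ref{conj6.1}, modeled on the proof of Theorem \ref{thm:Class}. Since $\nabla^b$ is Ambrose--Singer we have $\nabla^b T^b=0$, $\nabla^b R^b=0$ and $\nabla^b J=0$. Fix $p\in M$, put $V=T_pM$, let $G$ be the restricted Bismut holonomy group --- a connected compact subgroup of $\mathsf{U}(n)\subset\mathsf{SO}(2n)$ since $\nabla^b$ preserves $g$ and $J$ --- and let $\Rm=R^b_p$ and $T=T^b_p$ denote the induced algebraic curvature operator and torsion on $V$. Then $S^b=\{V,\Rm,T,G\}$ is a generalized symmetric holonomy system in the sense of \S 5: the Bianchi identities (\ref{eq:B1})--(\ref{eq:B2}) follow from $\nabla^b T^b=0$, and (\ref{eq:GS}) from $\nabla^b R^b=\nabla^b T^b=0$. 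The Bismut torsion $3$-form $c(x,y,z)=\langle T^b(x,y),z\rangle$ is totally skew-symmetric, so $T(x,\cdot)$ is skew-symmetric for each $x$, placing us in case (i) of Theorem \ref{thm:generlizedholo}. Decompose $V=\bigoplus_a V_a$ into $G$-irreducible summands; as in the proof of Theorem \ref{thm:Class} (via Lemma 2.2 of \cite{Ni-21}) the splitting of the $G$-action induces a splitting of $\Rm$, so --- modulo the subtlety that $T^b$ may couple distinct summands, which must be handled separately --- it suffices to show that on each non-flat irreducible summand the induced system forces $\Rm=0$. By Theorem \ref{thm:generlizedholo}(i) this holds once the induced system is Ricci flat there and $\Rm|_{\operatorname{image}(T)}=0$, that is $R^b_{x,T^b(y,z)}=0$ (the companion condition $\Rm_{x,y}\cdot T=0$ is automatic, as $\Rm_{x,y}\in\mathfrak{g}$ and (\ref{eq:GS}) gives $A(T)=0$ for all $A\in\mathfrak{g}$).

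The two Ricci hypotheses must supply both the Ricci-flatness input on each summand and --- this is the genuinely new point --- the vanishing $R^b_{x,T^b(y,z)}=0$. In the Chern case the analogue of the latter came for free: the $\mathcal{W}\oplus\mathcal{N}$ splitting recalled in \S 2 gives $R|_{\mathcal{W}}=0$, and the K\"ahler-like symmetry of the Chern curvature upgrades this to $R_{\cdot,w}=0$ for every $w\in\mathcal{W}=\operatorname{span}\{T(\cdot,\cdot)\}$. The Bismut curvature enjoys no such symmetry, so I would prove $R^b_{x,T^b(y,z)}=0$ directly, as follows: (a) split $c=c'+c''$ into its $(2,1)+(1,2)$ and $(3,0)+(0,3)$ parts and identify the $G$-submodule of $V$ generated by $\operatorname{image}(T^b)$; (b) contract the first Bianchi identity (\ref{eq:B1}) and use that for a BAS manifold $\Ric^{b(1)}=\Ric^{b(2)}$ (equation (1.2) of Theorem 1.1 in \cite{ZZ22}) together with $\Ric^{b(1)}=\Ric^{b(3)}=0$ to obtain pointwise identities relating $|T^b|^2$-type quantities to curvature traces; (c) integrate these identities over the compact manifold $M$ and integrate by parts using $\nabla^b$-parallelism --- equivalently, run a Bochner-type argument on a normalized scalar built from $T^b$ and $R^b$ --- to conclude that on each non-flat summand either $T^b$ vanishes there or $R^b$ annihilates $\operatorname{image}(T^b)$. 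Granting this, Theorem \ref{thm:generlizedholo}(i) applies summand by summand, $R^b=0$, and the classification of compact Bismut-flat manifolds in \cite{WYZ} identifies $M$ with a quotient of a Samelson space.

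The main obstacle is step (b)--(c): establishing $R^b_{x,T^b(y,z)}=0$. The coupling between curvature and torsion that the K\"ahler-like identity suppresses in the Chern setting is, for the Bismut connection, precisely the mechanism behind the ``nontrivial'' BAS examples --- Vaisman solvmanifolds and the Hopf manifolds with $n\geq 3$ of \S 1 --- in which $R^b$ and $c$ remain intertwined although both are $\nabla^b$-parallel. I therefore expect a fully general argument to require a new curvature identity special to the pair $(R^b,c)$ under the two Ricci vanishings; without it the plan only closes under an extra hypothesis. If the metric is Bismut K\"ahler-like one is in the situation of case (ii) of Theorem \ref{thm:generlizedholo}, recovering Proposition \ref{prop1.8a}; and in complex dimension $n\leq 4$ one can instead enumerate the admissible algebraic data $(\mathfrak{g},V,T,\Rm)$ directly, which is how Theorem \ref{prop1.7a} is proved. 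Should the conjecture be false, a counterexample is most plausibly sought in complex dimension $3$ among compact quotients of naturally reductive spaces carrying a $3$-form torsion of genuinely mixed type $c=c'+c''$.
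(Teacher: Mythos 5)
The statement you are addressing is labeled a \emph{conjecture} in the paper, and it remains one: the authors establish it only in complex dimension $n\leq 4$ (Theorem \ref{prop1.7a}, by a case-by-case analysis in admissible frames) and under the additional Bismut K\"ahler-like hypothesis (Proposition \ref{prop1.8a}). Your proposal is rightly framed as a strategy rather than a proof, and the framework you set up --- realizing $(T_pM, R^b_p, T^b_p, G)$ as a generalized symmetric holonomy system in the sense of \S 5, noting that the total skew-symmetry of the Bismut torsion places you in case (i) of Theorem \ref{thm:generlizedholo}, and observing that $\Rm_{x,y}\cdot T=0$ is automatic from (\ref{eq:GS}) --- is exactly the reduction the paper itself performs. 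You have also correctly located the obstruction: everything hinges on $R^b_{x,T^b(y,z)}=0$, which in the Chern case falls out of the K\"ahler-like symmetry together with $R|_{\mathcal{W}}=0$, and which for the Bismut connection has no known source.

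Two gaps remain concrete and unresolved, so the proposal does not constitute a proof. First, your steps (b)--(c) are a wish list rather than an argument: no identity is exhibited that converts $\Ric^{b(1)}=\Ric^{b(3)}=0$ into $R^b_{x,T^b(y,z)}=0$, and the proposed Bochner-type integration has no specific integrand behind it. The paper's proof of Theorem \ref{prop1.7a} for $n\leq 4$ does not proceed this way at all; it exploits the admissible-frame normal form of \cite{ZZ22} (the constants $\lambda$, $a_i$, the $B$-tensor eigenvalue relations (\ref{eq:BT2}), (\ref{eq:6.9})--(\ref{eq:6.10})) and the low dimension to force the symmetric matrix $S$ in the Claim to vanish --- a mechanism that visibly degenerates for $n\geq 5$. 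Second, the passage from the global Ricci hypotheses to Ricci-flatness of each irreducible holonomy summand is itself nontrivial: the third Bismut Ricci is a trace over the full frame, and without the K\"ahler-like symmetry (used in Proposition \ref{prop6.7} to make $\Theta^b$ block-diagonal with each block's entries supported on that block) or the parallel symplectic forms of the Chern case, there is no reason the contraction localizes to a summand; your parenthetical about $T^b$ coupling distinct summands is precisely where this difficulty lives. In short, the proposal is a sound reduction to the open core of the conjecture, consistent with the partial results the paper does prove, but it leaves that core untouched --- which is why the statement is a conjecture and not a theorem.
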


In complex dimension $2$, the answer to Conjecture \ref{conj6.1} is positive. As a direct consequence of \cite[Theorem 2]{ZZpluriclosed}, we have  the following
\begin{proposition}
A compact, non-K\"ahler Hermitian surface $(M^2,g)$ is BAS if and only if it is Vaisman with constant scalar curvature.
\end{proposition}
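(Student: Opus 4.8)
The plan is to prove the equivalence by translating both the BAS condition and the Vaisman-plus-constant-scalar-curvature condition onto a compact non-K\"ahler Hermitian surface and invoking the classification of pluriclosed-flow-type surfaces. First I would recall that on a complex surface every Hermitian metric is automatically pluriclosed ($\partial\bar\partial\omega=0$), so the Bismut connection here is precisely the connection studied in \cite{ZZpluriclosed}. The key structural fact, which I would cite as \cite[Theorem 2]{ZZpluriclosed} (possibly combined with the surface case of the Bismut-torsion-parallel classification), is that a compact non-K\"ahler Hermitian surface whose Bismut connection has parallel torsion is exactly a Vaisman surface, and among these the ones that are additionally BAS (i.e. also have parallel Bismut \emph{curvature}) are characterized by the scalar curvature being constant. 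So the proof is essentially an application of that theorem rather than a fresh computation.

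Concretely, the forward direction: assume $(M^2,g)$ is BAS and non-K\"ahler. Then $\nabla^b T^b=0$, so in particular the Bismut torsion is parallel, which by the surface theory (Vaisman surfaces are the only compact non-K\"ahler Bismut-torsion-parallel surfaces in complex dimension $2$, cf.\ the discussion around \cite{AndV} and the Example in \S1) forces $g$ to be Vaisman. It then remains to extract ``constant scalar curvature'' from the additional hypothesis $\nabla^b R^b=0$. Here I would use that on a Vaisman surface the Bismut curvature tensor is built from the Lee form, the complex structure, and the (Chern or Gauduchon) scalar curvature function in an explicit way — one reduces $\nabla^b R^b=0$ to the statement that a certain scalar function (the scalar curvature) has vanishing covariant derivative, hence is constant since $M$ is connected. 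For the reverse direction: assume $g$ is Vaisman with constant scalar curvature; Vaisman already gives $\nabla^b T^b=0$ (the Lee form is $\nabla^{LC}$-parallel, hence the torsion $3$-form is Bismut-parallel), and constancy of the scalar curvature plugged into the explicit curvature formula yields $\nabla^b R^b=0$, so $g$ is BAS.

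I expect the main obstacle to be the curvature bookkeeping in the step ``$\nabla^b R^b = 0 \iff$ scalar curvature constant'' on a Vaisman surface: one must write the full Bismut curvature of a Vaisman metric in a frame adapted to the Lee vector field, identify which components are pinned down by the Vaisman structure (hence automatically parallel) and which carry the scalar curvature, and verify that the only remaining obstruction to $\nabla^b R^b=0$ is $d(\text{scal})=0$. This is where I would lean most heavily on \cite{ZZpluriclosed}, quoting their normal-form computation rather than redoing it. A secondary technical point is making sure that ``scalar curvature'' means the same thing on both sides — on a Vaisman surface the Chern, Bismut, and Riemannian scalar curvatures differ only by universal constants times $|\theta|^2$ (which is itself constant in the Vaisman case), so constancy of one is equivalent to constancy of any other, and I would remark on this to avoid ambiguity.
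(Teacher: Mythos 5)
Your proposal takes essentially the same route as the paper: the paper states this proposition with no computation at all, deriving it as ``a direct consequence of \cite[Theorem 2]{ZZpluriclosed}'', which is exactly the theorem you lean on, and your closing remark about the various scalar curvatures differing by (constants and) multiples of the constant torsion norm is precisely the clarification the paper appends after the proposition.

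One small inaccuracy worth fixing: it is \emph{not} true that every Hermitian metric on a compact complex surface is pluriclosed; Gauduchon's theorem only guarantees a pluriclosed (= Gauduchon, when $n=2$) representative in each conformal class. This does not damage your argument, since the cited surface classification does not presuppose pluriclosedness --- the parallel-Bismut-torsion hypothesis already forces the non-K\"ahler surface to be Vaisman, and Vaisman surfaces are pluriclosed --- but the blanket claim as stated should be removed or corrected.
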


Recall that a Hermitian manifold is said to be {\em Vaisman} if it is locally conformally K\"ahler and its Lee form is parallel under the Levi-Civita connection. Compact non-K\"ahler Vaisman surfaces are fully classified by Belgun in a beautiful work \cite{Belgun}. They are either non-K\"ahler properly elliptic surfaces, or Kodaira surfaces, or  Class 1 or elliptic Hopf surfaces.

We remark that for a non-K\"ahler Vaisman surface, the scalar curvature of Levi-Civita or Chern or Bismut connection (or the trace of first or second Ricci of Chern or Bismut connection) all differ by constants. So  the word `constant scalar curvature' here simply means any one of them (hence all of them) is constant. Also, such a surface always admits local unitary frame $\{ e_1, e_2\}$ under which the only possibly non-zero component of the Bismut curvature tensor is $R^b_{1\bar{1}1\bar{1}}$. So clearly if it is Bismut Ricci flat (or equivalently Bismut scalar flat) then it will be Bismut flat:

\begin{corollary}
If a compact BAS surface is Bismut Ricci flat, then it is Bismut flat.
\end{corollary}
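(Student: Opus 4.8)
The plan is to deduce the corollary directly from the Proposition classifying compact non-K\"ahler BAS surfaces as Vaisman with constant scalar curvature, together with the structural remark (stated just before the corollary) that on such a surface one can find a local unitary frame $\{e_1,e_2\}$ in which the only possibly non-vanishing component of the Bismut curvature tensor is $R^b_{1\bar1 1\bar1}$. First I would dispose of the K\"ahler case: if a compact BAS surface is K\"ahler, then its Bismut connection coincides with the Levi-Civita (and Chern) connection, and being BAS means it is a locally Hermitian symmetric K\"ahler surface; a K\"ahler surface with vanishing Ricci is in particular Chern Ricci flat, and a Ricci-flat compact K\"ahler surface is flat by the Calabi--Yau theorem (or directly, a compact locally Hermitian symmetric K\"ahler surface which is Ricci flat must be flat). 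So it remains to treat the non-K\"ahler case.

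For the non-K\"ahler case, I would invoke the Proposition: the surface is Vaisman with constant scalar curvature, hence admits the special local unitary frame above in which $R^b_{i\bar j k\bar l}=0$ except possibly for $R^b_{1\bar1 1\bar1}$. In this frame the Bismut Ricci curvatures are all proportional to $R^b_{1\bar1 1\bar1}$: the (third) Bismut Ricci satisfies $\Ric^{b(3)}(e_1,\bar e_1)=R^b_{1\bar1 1\bar1}$ and all its other components vanish, and similarly the Bismut scalar curvature equals $R^b_{1\bar1 1\bar1}$. Therefore the hypothesis that the surface is Bismut Ricci flat forces $R^b_{1\bar1 1\bar1}=0$ pointwise, which means the entire Bismut curvature tensor vanishes, i.e.\ the surface is Bismut flat. (One can equally run the argument with ``Bismut scalar flat'' in place of ``Bismut Ricci flat'', which is the slightly sharper statement indicated by the parenthetical remark.)

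The only step requiring any care is making precise the claim that in the distinguished frame the Bismut curvature has a single component; but this is exactly the content of the structural remark preceding the corollary, which in turn rests on Belgun's classification of compact non-K\"ahler Vaisman surfaces and the explicit form of the Bismut connection on locally conformally K\"ahler surfaces with parallel Lee form. Thus the corollary is essentially immediate once the Proposition and that remark are in hand; there is no genuine obstacle, and the proof amounts to a two-line observation. For completeness one should just note that ``Bismut Ricci flat'' here may be read as the vanishing of any one (hence all) of the Bismut Ricci traces, since on a non-K\"ahler Vaisman surface these differ by constants and all equal $R^b_{1\bar1 1\bar1}$ in the distinguished frame.
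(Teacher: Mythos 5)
Your proof is correct and follows the same route as the paper: the non-K\"ahler case is exactly the observation in the remark preceding the corollary (in the distinguished unitary frame the only possibly non-zero component is $R^b_{1\bar 1 1\bar 1}$, which the Ricci-flatness kills), and the K\"ahler case is handled by noting that a Ricci-flat locally Hermitian symmetric space is flat. One caveat: your appeal to the Calabi--Yau theorem in the K\"ahler case is wrong as stated, since a Ricci-flat compact K\"ahler surface need not be flat (K3 surfaces); however, your parenthetical alternative via local Hermitian symmetry is the correct justification and is the one the paper uses.
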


Here we could drop the adjective `non-K\"ahler' since any locally Hermitian symmetric space would be flat if Ricci flat. Note that the only compact, non-K\"ahler Bismut flat surfaces are the isosceles Hopf surfaces (\cite{WYZ}). The above statement confirms Conjecture \ref{conj6.1} in the $2$-dimensional case.

\vspace{0.1cm}

Now let us move on to higher dimensions. Recall that {\em Bismut torsion-parallel} (or {\em BTP} for short) manifolds are Hermitian manifolds whose Bismut connection has parallel torsion. Such manifolds form a rather interesting class, and contains both our BAS manifolds and the BKL (Bismut K\"ahler-like) manifolds. Compact, non-K\"ahler BTP manifolds in dimension $3$ were characterized in \cite{ZZ22}, from which we could pick out those with vanishing first and third Bismut Ricci, and they all turn out to be Bismut flat, proving Conjecture \ref{conj6.1} in the $n=3$ case.

Recall that the $B$-tensor of a Hermitian manifold $(M^n,g)$ is defined by $B_{i\bar{j}}=\sum_{r,s=1}^n T^j_{rs} \overline{ T^i_{rs} }$ under any unitary frame $e$, where $T_{ik}^j$ are components of the Chern torsion. Then
$  \sigma_B= \sqrt{-1} \sum_{i,j=1}^n    B_{i\bar{j}}   \varphi_i \wedge \overline{\varphi}_j$
is a globally defined non-negative $(1,1)$-form on $M$. Here $\varphi$ is the coframe dual to $e$. Also recall that the (first) Chern Ricci form is the global $(1,1)$-form given by
$$ \rho = \sqrt{-1} \sum_{i,j=1}^n \mbox{Ric}^{(1)}_{i\bar{j}}  \varphi_i \wedge \overline{\varphi}_j = - \sqrt{-1} \partial \overline{\partial} \,\log \omega^n ,$$
where $\omega$ is the K\"ahler form of $g$. The following technical lemma is useful in picking out Bismut Ricci flat BAS metrics:

\begin{lemma}  \label{lemma6.4}
Suppose $(M^n,g)$ is a BTP manifold that is (third) Bismut Ricci flat. Then the (first) Chern Ricci form is equal to $\sigma_B$.
\end{lemma}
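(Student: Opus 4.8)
\textbf{Proof proposal for Lemma \ref{lemma6.4}.}

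The plan is to compute both sides of the claimed identity $\rho = \sigma_B$ in terms of Chern connection data and reduce everything to the known relationship between the Bismut and Chern curvature tensors, exploiting the BTP hypothesis to simplify the covariant derivatives of torsion that appear. First I would recall the standard formula expressing the Bismut curvature in terms of the Chern curvature and the Chern torsion: schematically, under a unitary frame, $R^b_{i\bar j k\bar\ell}$ equals $R_{i\bar j k\bar\ell}$ plus terms quadratic in $T$ together with terms involving the Chern-covariant derivatives $\nabla T$ and $\nabla\overline{T}$. Tracing this identity appropriately to extract the third Bismut Ricci, one obtains an expression of the form $\mathrm{Ric}^{b(3)}_{i\bar j} = \mathrm{Ric}^{(1)}_{i\bar j} - B_{i\bar j} + (\text{divergence-type terms in }\nabla T)$, where $\mathrm{Ric}^{(1)}$ is the first Chern Ricci, since for the Chern connection the first and third Ricci are related in a standard way through the torsion $1$-form. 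The BTP assumption $\nabla^b T = 0$ is then used to kill, or to convert into $B$-tensor terms, exactly those $\nabla T$ contributions: since $\nabla$ and $\nabla^b$ differ by a torsion term, $\nabla^b T = 0$ lets one rewrite $\nabla_k T^j_{rs}$ as an expression bilinear in $T$, which after tracing collapses the error terms.

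The key steps, in order, would be: (1) write down the precise Bismut–Chern curvature comparison formula under a unitary frame (this is available in the literature, e.g. the computations underlying \cite{ZZ22}); (2) take the trace giving the third Bismut Ricci and the trace giving the first Chern Ricci, keeping careful track of signs and of which indices are contracted; (3) substitute $\nabla^b T = 0$ to eliminate all genuine derivative terms in favour of torsion-quadratic terms; (4) identify the surviving torsion-quadratic trace with $B_{i\bar j} = \sum_{r,s} T^j_{rs}\overline{T^i_{rs}}$, checking that the combinatorics of the two torsion indices match the definition of the $B$-tensor and that no spurious trace of torsion (i.e. no term involving only the torsion $1$-form $\eta$) remains; (5) impose $\mathrm{Ric}^{b(3)} = 0$ to conclude $\mathrm{Ric}^{(1)}_{i\bar j} = B_{i\bar j}$, i.e. $\rho = \sigma_B$ as $(1,1)$-forms.

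I expect the main obstacle to be step (3)–(4): correctly bookkeeping the various torsion-quadratic terms so that, after using $\nabla^b T = 0$ and tracing, they assemble precisely into $B_{i\bar j}$ rather than into some other contraction of $T\otimes\overline T$ (there are a priori several inequivalent ways to contract two copies of the torsion, only one of which is the $B$-tensor). A secondary point requiring care is that BTP is a condition on $T^b$, and one must first invoke the observation (made earlier in the paper) that $\nabla^b T = 0$ follows from $\nabla^b T^b = 0$, so that the comparison formula — which is naturally phrased in terms of the Chern torsion $T$ — can be simplified. Once the algebra of step (4) is pinned down, the conclusion is immediate. I would also double-check positivity/reality: both $\rho$ and $\sigma_B$ are real $(1,1)$-forms, and $\sigma_B \geq 0$, which is consistent with the identity but is not needed for the proof itself.
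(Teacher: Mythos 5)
Your plan is essentially the paper's proof: the paper invokes the two identities from Lemma 3.1 of \cite{ZZ22} (the symmetry defect $R^b_{i\bar jk\bar\ell}-R^b_{k\bar ji\bar\ell}$ and the comparison $R^b_{i\bar jk\bar\ell}-R_{i\bar jk\bar\ell}$), in which the BTP hypothesis $\nabla^bT=0$ has already removed all derivative terms, subtracts them to get $R_{i\bar jk\bar\ell}-R^b_{k\bar ji\bar\ell}=\sum_r T^j_{kr}\overline{T^i_{\ell r}}$, and traces over $k=\ell$ to obtain $\mathrm{Ric}^{(1)}-\mathrm{Ric}^{b(3)}=B$. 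The bookkeeping you flag as the main obstacle is exactly what those quoted formulas settle, and the surviving quadratic contraction is precisely $B_{i\bar j}$, as you anticipated.
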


\begin{proof}
Under any local unitary frame $e$, let us denote by $T_{ik}^j$, $R_{i\bar{j}k\bar{\ell}}$, and $R^b_{i\bar{j}k\bar{\ell}}$ the components of the Chern torsion, Chern curvature, and Bismut curvature. Here we follow the convention that the Chern torsion tensor is $T(e_i,e_k)=\sum_j T^j_{ik} e_j$,  so our $T_{ik}^j$ equals to twice of that in \cite{ZZ22}. Under the BTP assumption, we have $\nabla^bT=0$, so by formula (3.2) and (3.4) in Lemma 3.1 of \cite{ZZ22}, we get the following
\begin{eqnarray}
R^b_{i\bar{j}k\bar{\ell}}- R^b_{k\bar{j}i\bar{\ell}} & = & - T^r_{ik}\overline{ T^r_{j\ell } } - T^j_{ir}\overline{ T^k_{\ell r} } - T^{\ell}_{kr}\overline{ T^i_{jr} } + T^{\ell}_{ir}\overline{ T^k_{jr} } + T^j_{kr}\overline{ T^i_{\ell r} },  \label{eq:6.4} \\
R^b_{i\bar{j}k\bar{\ell}}- R_{i\bar{j}k\bar{\ell}} & = & - T^r_{ik}\overline{ T^r_{j\ell } } - T^j_{ir}\overline{ T^k_{\ell r} } - T^{\ell}_{kr}\overline{ T^i_{jr} } + T^{\ell}_{ir}\overline{ T^k_{jr}}     \label{eq:6.5}
\end{eqnarray}
for any $1\leq i,j,k,\ell \leq n$. Here $r$ is summed from $1$ to $n$ on the right hand side of the above two equations. Taking the difference, we get $R_{i\bar{j}k\bar{\ell}} - R^b_{k\bar{j}i\bar{\ell}} = T^j_{kr}\overline{ T^i_{\ell r} }$. Letting $k=\ell$ and sum it up, we get $\mbox{Ric}^{(1)} - \mbox{Ric}^{b(3)} = B$. So $\rho = \sigma_B$ when $\mbox{Ric}^{b(3)}=0$. This proves the lemma.
\end{proof}

In particular, we have $d\sigma_B=0$ since $d\rho =0$, which gives us the following:

\begin{lemma}
Let $(M^n,g)$ be a (third) Bismut Ricci flat BTP manifold. Then under any local unitary frame it holds that
\begin{equation} \label{eq:BT}
 \sum_r \big( T^j_{rk} B_{i\bar{r}} +  T^j_{ir} B_{k\bar{r}} -  T^r_{ik} B_{r\bar{j}} \big)  \, = \, 0, \ \ \ \ \ \forall \ \,1\leq i,j,k,  \leq n.
 \end{equation}
 In particular, $M$ has non-negative first Chern class but is not Fano.
\end{lemma}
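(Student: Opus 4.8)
The plan is to establish the identity \eqref{eq:BT} by differentiating the closed $(1,1)$-form $\sigma_B$ and extracting its $(2,1)$-component, and then to deduce the topological consequence about the first Chern class from Lemma \ref{lemma6.4} together with the non-negativity of $\sigma_B$. First I would recall that on any Hermitian manifold the exterior derivative of a $(1,1)$-form $\sigma = \sqrt{-1}\sum B_{i\bar j}\varphi_i\wedge\overline\varphi_j$ decomposes into its $(2,1)$ and $(1,2)$ parts, and that $d\sigma = 0$ forces each part to vanish separately. Writing out $\partial\sigma_B$ in the unitary coframe $\varphi$ using the structure equations $d\varphi_j = -\frac12\sum T^j_{ik}\varphi_i\wedge\varphi_k + (\text{terms in }\varphi\wedge\overline\varphi)$ and covariantly differentiating the components $B_{i\bar j}$, the BTP hypothesis $\nabla^b T = 0$ (hence, after accounting for the difference tensor, the Chern-covariant derivatives of $T$ are controlled algebraically by $T$ itself) should kill all the derivative terms and leave exactly the algebraic combination $\sum_r(T^j_{rk}B_{i\bar r} + T^j_{ir}B_{k\bar r} - T^r_{ik}B_{r\bar j})$ as the coefficient of $\varphi_i\wedge\varphi_k\wedge\overline\varphi_j$. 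Since by Lemma \ref{lemma6.4} we have $\sigma_B = \rho$ and $\rho$ is $d$-closed (being, up to constant, the curvature form of the canonical bundle), $d\sigma_B = 0$, so this coefficient vanishes, which is precisely \eqref{eq:BT}.

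The main obstacle I expect is the bookkeeping in the first step: one must be careful about which connection's covariant derivative appears when differentiating $B_{i\bar j} = \sum_{r,s}T^j_{rs}\overline{T^i_{rs}}$, and about the precise relation between $\nabla^b T$, $\nabla T$, and the plain exterior derivative $d$ of the $1$-form pieces of $\sigma_B$. The cleanest route is probably to avoid choosing a frame that is unitary but not holomorphic and instead work with an arbitrary local unitary frame, expressing $d\varphi$ via the Chern connection matrix plus torsion, so that the derivative of $B_{i\bar j}$ along the connection exactly cancels the connection-matrix contributions and only the torsion-quadratic terms survive. Here one genuinely uses BTP: without $\nabla^b T = 0$ one would be left with uncontrolled first-derivative terms in $T$. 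An alternative, perhaps shorter, derivation is to feed the identity $\rho = \sigma_B$ directly into the already-known structure of BTP manifolds from \cite{ZZ22} and read off \eqref{eq:BT} as the vanishing of $\partial\sigma_B$ there; either way the content is the same.

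For the final two sentences, I would argue as follows. Since $\sigma_B$ is a non-negative $(1,1)$-form (it is manifestly $\geq 0$ as $B_{i\bar j} = \sum_{r,s}T^j_{rs}\overline{T^i_{rs}}$ is a sum of rank-one positive semidefinite pieces) and it is $d$-closed, it represents a class in $H^{1,1}(M,\mathbb{R})$ (or at least a Bott--Chern class) that is pseudo-effective, and by Lemma \ref{lemma6.4} this class equals $2\pi c_1(M)$. Hence $c_1(M)$ is represented by a non-negative closed form, so $M$ has non-negative first Chern class. On the other hand $M$ cannot be Fano: if it were, $c_1(M)$ would be a Kähler class, forcing $\sigma_B > 0$ everywhere, hence $B_{i\bar j} > 0$ everywhere, hence the Chern torsion is nowhere zero; but a Fano manifold is simply connected with $b_1 = 0$, and more to the point the positivity of $\sigma_B = \rho$ combined with the BTP structure is incompatible with $M$ being non-Kähler (a Bismut-flat or, more generally, BTP Fano would have to be Kähler Fano, for which $T \equiv 0$ and thus $\sigma_B \equiv 0$, contradicting $\sigma_B > 0$). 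So I would phrase the non-Fano conclusion by noting that $\rho = \sigma_B$ forces $\rho$ to be degenerate wherever $T$ degenerates and, in the genuinely non-Kähler case treated here, $T \not\equiv 0$ does not upgrade to $\sigma_B$ being strictly positive; the precise statement is that $\sigma_B$ being a non-negative but not strictly positive representative of $c_1(M)$ rules out the Kähler-positivity needed for Fano. The delicate point worth double-checking is exactly this last implication, i.e.\ ruling out that $\sigma_B$ could accidentally be strictly positive on a non-Kähler BAS manifold; this should follow because $\mathcal W \oplus \mathcal N$ with $\mathcal N \neq 0$ (true whenever $g$ is non-Kähler but not of maximal torsion rank) forces $B_{i\bar j}$ to have a kernel, but in the stated generality one may instead simply cite that compact Hermitian manifolds with $\rho = \sigma_B \geq 0$ and $\rho$ not Kähler are not Fano.
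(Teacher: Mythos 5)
Your derivation of the identity \eqref{eq:BT} follows the same route as the paper: $\sigma_B=\rho$ by Lemma \ref{lemma6.4}, hence $d\sigma_B=0$, and extracting the $(2,1)$-part of $d\sigma_B$ via the structure equations together with $\nabla^bT=0$ (which kills the derivative terms of $B$) yields exactly the stated algebraic relation. That half is fine.

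The ``not Fano'' half has a genuine gap, and the substitutes you offer are incorrect. The implication ``Fano $\Rightarrow c_1(M)$ is a K\"ahler class $\Rightarrow \sigma_B>0$ everywhere'' is false: a K\"ahler class admits semi-positive representatives that degenerate on large sets, so positivity of the class says nothing about strict positivity of the particular representative $\sigma_B$. Your closing ``precise statement''---that a non-negative but somewhere-degenerate representative of $c_1$ rules out Fano---fails for the same reason. What actually closes the argument, and what your proposal never does, is to feed the identity \eqref{eq:BT} back in. Since $\nabla^bB=0$, the eigenvalues $b_1\geq\cdots\geq b_n\geq 0$ of $B$ are global constants; in a unitary frame diagonalizing $B$ the identity reduces to $(b_i+b_k-b_j)\,T^j_{ik}=0$. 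If $b_n>0$, taking $j=n$ gives $b_i+b_k-b_n\geq b_n>0$ for all $i,k$, hence $T^n_{ik}=0$ for all $i,k$, hence $b_n=\sum_{r,s}|T^n_{rs}|^2=0$, a contradiction. So $b_n\equiv 0$, $\det B\equiv 0$, $\rho^n=\sigma_B^{\,n}\equiv 0$, and therefore $c_1(M)^n=0$, which is incompatible with Fano. Your fallback via $\mathcal{N}\neq 0$ does not rescue the argument either: on a complex parallelizable BTP manifold (e.g.\ a compact quotient of a semisimple complex Lie group) the tensor $B$ is positive definite, so degeneracy of $B$ is not automatic for non-K\"ahler BTP manifolds---it is precisely the Bismut Ricci-flatness hypothesis, acting through \eqref{eq:BT}, that forces it.
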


\begin{proof}
Equation (\ref{eq:BT}) is a direct consequence of the structure equations (see also (3.9) of \cite{NZ-CAS}) and the fact that $d\sigma_B=0$, so we just need to prove the `not Fano' part.  Denote by $b_1\geq \cdots \geq b_n\geq 0$ the eigenvalues of $B$. Under the BTP assumption, we have $\nabla^bB=0$, so each $b_i$  is a global constant. If we choose $e$ so that $B$ is diagonal, then (\ref{eq:BT}) becomes
\begin{equation} \label{eq:BT2}
 \big( b_i+b_k-b_j \big) T^j_{ik}  = 0, \ \ \ \ \ \forall \ \,1\leq i,j,k,  \leq n.
 \end{equation}
We claim that $b_n$ must be zero. Assume otherwise, by letting $j=n$ in (\ref{eq:BT2}), we get $T^n_{ik}=0$ for any $i,k$, which leads to $b_n=B_{n\bar{n}} = \sum_{r,s} |T^n_{rs}|^2 = 0$, a contradiction. So $B\not> 0$ thus $M$ is not Fano.
\end{proof}

Now let us prove  the following statement (= Theorem \ref{prop1.7a}):

\begin{theorem} \label{prop6.11}
Any complete, non-K\"ahler BAS manifold with vanishing first and third Bismut Ricci must be non-balanced. Moreover, if $n\le 4$ then it  must be Bismut flat.
\end{theorem}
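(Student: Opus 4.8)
The plan is to argue in two stages, first establishing the non-balanced claim in all dimensions and then bootstrapping to Bismut flatness when $n \le 4$. For the non-balanced part, suppose toward a contradiction that $(M^n,g)$ is balanced, i.e. $d\omega^{n-1} = 0$. Balancedness is equivalent to the vanishing of Gauduchon's torsion $1$-form $\eta$, which in a unitary frame reads $\sum_k T^k_{ik} = 0$. Since $M$ is BAS it is in particular BTP, so Lemma \ref{lemma6.4} applies and gives $\rho = \sigma_B$; in particular $\rho \ge 0$ and hence $c_1(M) \ge 0$. On the other hand, on a compact balanced manifold the trace of $\rho$ against $\omega^{n-1}$ computes a topological quantity, and combined with $\rho = \sigma_B \ge 0$ one concludes $\rho$ is both $\ge 0$ and of total mass zero, forcing $\sigma_B \equiv 0$, i.e. $B \equiv 0$, i.e. the Chern torsion vanishes, i.e. $g$ is K\"ahler. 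This contradicts the hypothesis that $M$ is non-K\"ahler. (If one wants completeness rather than compactness, one instead uses that $\nabla^b B = 0$ makes the eigenvalues $b_1 \ge \cdots \ge b_n \ge 0$ of $B$ global constants, and the relation $\eta = 0$ together with \eqref{eq:BT2} forces all $b_i = 0$ directly, as in the proof of the preceding lemma with $j$ chosen to be the index of the largest nonzero eigenvalue.)

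For the $n \le 4$ Bismut-flatness statement, I would exploit the fact that a BAS manifold, being BTP and Bismut Ricci flat, has $B \not\equiv 0$ with the structure \eqref{eq:BT2}: choosing a unitary frame diagonalizing $B$ with constant eigenvalues $b_1 \ge \cdots \ge b_n \ge 0$, we have $(b_i + b_k - b_j)T^j_{ik} = 0$ and $b_n = 0$. The strategy is to run through the short list of possible eigenvalue patterns of $B$ in dimensions up to $4$. In each pattern, \eqref{eq:BT2} kills most torsion components $T^j_{ik}$, leaving only a few ``allowed'' ones; then one plugs the surviving torsion into the curvature identities \eqref{eq:6.4}--\eqref{eq:6.5} (the BTP Bismut curvature formulas), uses the Bismut Ricci flatness $\mathrm{Ric}^{b(1)} = \mathrm{Ric}^{b(3)} = 0$ together with $\mathrm{Ric}^{b(1)} = \mathrm{Ric}^{b(2)}$ (always true for BAS), and shows that all components of $R^b$ must vanish. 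A useful intermediate observation is that the nonzero part of $B$ lives on a parallel subbundle of low rank (rank $\le 3$ since $b_n = 0$ in dimension $4$), so the ``non-flat directions'' are tightly constrained; combined with the first Bianchi identity for $R^b$ (which holds in the Bismut K\"ahler-like directions) and the Kostant-type Lie-algebra structure $\mathfrak{J} = V \oplus \mathfrak{g}$ from Theorem \ref{thm:Nomizu}, one can feed this into the generalized holonomy machinery of \S 5 (Theorem \ref{thm:generlizedholo}, case (ii)) once the torsion is shown to be small enough that $\Rm|_{\operatorname{image}(T)} = 0$.

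The main obstacle I anticipate is the dimension-$4$ case-checking: unlike $n \le 3$, where the classification of BTP manifolds is already available in the literature (\cite{ZZ22}) and one can simply read off the answer, in dimension $4$ there may be no ready-made classification, so one has to do the eigenvalue-pattern bookkeeping by hand. The delicate patterns are those with eigenvalue multiplicities, e.g. $b_1 = b_2 = b_3 > 0 = b_4$ or $b_1 = b_2 > b_3 > 0 = b_4$, where \eqref{eq:BT2} leaves genuinely many torsion components free and one must use the full force of the curvature identities \eqref{eq:6.4}--\eqref{eq:6.5} rather than \eqref{eq:BT2} alone. A secondary subtlety is matching the ``completeness'' hypothesis in the theorem statement with the ``compactness'' used in Lemma \ref{lemma6.4}: for the balanced part one can localize the argument via constancy of the $b_i$ as indicated, but for the flatness part one should double-check that the algebraic identities used are all pointwise and hence survive on a merely complete manifold, which they do since $\nabla^b T = 0$ and $\nabla^b R^b = 0$ propagate everything from one point.
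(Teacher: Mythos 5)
There is a genuine gap in both halves. For the non-balanced claim, the paper's proof hinges on an identity you never derive: combining the \emph{first} and third Bismut Ricci flatness through \eqref{eq:6.4} yields $B=\phi+\phi^{\ast}$, where $\phi_i^j=\sum_r T^j_{ir}\overline{\eta}_r$; balancedness ($\eta=0$) then gives $\phi=0$, hence $B=0$, hence $T=0$, contradicting non-K\"ahlerness. Your compact substitute fails: $\int_M\rho\wedge\omega^{n-1}$ is \emph{not} zero on a balanced manifold (it computes a pairing of $c_1$ with $[\omega^{n-1}]$, positive already for a Fano K\"ahler metric), so ``$\rho\ge 0$ of total mass zero'' does not follow from $\rho=\sigma_B$. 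Your complete-case substitute also fails: \eqref{eq:BT2} together with $\eta=0$ does not force $B=0$. For instance with eigenvalue pattern $b_1=b_2>0=b_3$ the coefficients $b_i+b_k-b_j$ vanish for $T^1_{13},T^2_{23},T^1_{23},T^2_{13}$, so \eqref{eq:BT2} leaves them free, and $\eta_3=T^1_{13}+T^2_{23}=0$ is only one linear relation; $b_1=2(|T^1_{13}|^2+|T^1_{23}|^2)$ can still be positive. The constraint that actually kills $B$ is the one involving the first Bismut Ricci, which your argument never uses.

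For the flatness claim when $n\le 4$, what you give is a plan rather than a proof, and the decisive mechanisms of the paper's argument are absent. The paper works in an \emph{admissible} frame ($\eta_1=\cdots=\eta_{n-1}=0$, $\eta_n=\lambda>0$, $T^j_{in}=a_i\delta_{ij}$ with the $a_i$ global constants), derives $(a_j-a_i)\theta^b_{ij}=0$ from $\nabla^bT=0$, and proves a key linear-algebra Claim: if $\theta^b$ is diagonal, the BTP symmetry $R^b_{i\bar jk\bar\ell}=R^b_{k\bar\ell i\bar j}$ reduces $R^b$ to a real symmetric $(n-1)\times(n-1)$ matrix $S$ whose diagonal and row sums vanish by Ricci flatness, which forces $S=0$ precisely when $n-1\le 3$. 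The remaining work is a case analysis on the multiplicities of $a_1,a_2,a_3$, reducing each case either to a contradiction (via \eqref{eq:BT2} and \eqref{eq:6.9}) or to a diagonal $\theta^b$ after a gauge choice. Your sketch correctly identifies that the multiplicities are the hard cases but does not carry them out, and your proposed appeal to Theorem \ref{thm:generlizedholo}(ii) is unjustified here: you would need to establish irreducibility and $\Rm|_{\operatorname{image}(T)}=0$, neither of which holds a priori for these BAS manifolds (and the paper does not take that route for this theorem). As it stands, the $n=4$ case --- the whole content of the second assertion --- is not proved.
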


\begin{proof}
Let $(M^n,g)$ be a complete BAS manifold with vanishing first and third Bismut Ricci. With the unitary frame and its dual as before, consider the $\phi$-tensor which is defined by $\phi_i^j=\sum_r T^j_{ir} \overline{\eta}_r$ under any unitary frame, where $\eta$ is the Gauduchon torsion $1$-form which is defined as $\eta=\sum_{i=1}^n \eta_i \varphi_i$ and $\eta_i=\sum_{k=1}^n T^k_{ki}$. By formula (\ref{eq:6.4}), in the proof of Lemma \ref{lemma6.4}, we see  that vanishing of first and third Bismut Ricci imply that  and $B=\phi + \phi^{\ast}$. On the other hand, Lemma \ref{lemma6.4} implies that $\Ric^{(1)}=B$. Since the trace of the $B$ tensor is equal to the square norm of the Chern torsion, which does not vanish identically as $g$ is not K\"ahler, so we have that $\phi \not\equiv 0$,  hence $\eta \not\equiv 0$, namely, $g$ is not balanced.

By \cite[Proposition 5.2]{ZZ22}, around  any given point in $M$ there always exists an {\em admissible} frame, which is a local unitary frame $e$ under which
$$ \eta_1=\cdots =\eta_{n-1}=0, \ \ \ \eta_n=\lambda >0, \ \ \ T^j_{in} = \delta_{ij}a_i, \ \ \ \forall \ 1\leq i,j\leq n,  $$
where $a_n=0$ and $a_1 +\cdots + a_{n-1}=\lambda$. Since $\eta$, $B$, $\phi$ are all parallel under $\nabla^b$, the number $\lambda = |\eta|>0$ and each $a_i$ are global constants on $M$, as these $\lambda a_i$ are eigenvalues of $\phi$. Denote by $\theta^b$, $\Theta^b$ the matrix of connection and curvature of the Bismut connection $\nabla^b$ under an admissible frame $e$. Since $e_n$ is lined up with $\eta$, we know that $\nabla^be_n=0$ thus $\theta^b_{n\ast}=0$ and $\Theta^b_{n\ast}=0$. By $\nabla^bT=0$, we get
 \begin{equation} \label{eq:6.8}
 0 = d(\delta_{ij}a_i) = dT^j_{in} = \sum_r \big( T^j_{rn} \theta^b_{ir} + T^j_{ir} \theta^b_{nr} - T^r_{in} \theta^b_{rj} \big) = (a_j-a_i)\,\theta^b_{ij} .
 \end{equation}
 Write $b_i=B_{i\bar{i}}= \sum_{r,s} |T^i_{rs}|^2$, we have
\begin{equation} \label{eq:6.9}
b_i = \lambda (a_i+\overline{a}_i) \, = \, 2|a_i|^2 + 2\!\!\sum_{1\leq j<k<n} |T^i_{jk}|^2 \, = \,2|a_i|^2 + 2 \delta_i.
\end{equation}
The second part of the theorem  follows from the following claim and establishment of the assumption in the claim afterwards.

{\bf Claim:} {\em If there exists an admissible frame under which $\theta^b$ is diagonal, and $n\leq 4$, then $R^b=0$.}

To prove the claim, note that by \cite[Theorem 1.1]{ZZ22} any BTP manifold always satisfies the symmetry condition $R^b_{i\bar{j}k\bar{\ell}}=  R^b_{k\bar{\ell}i\bar{j}}$, so the only possibly non-trivial Bismut curvature components are given by
$$ \Theta^b_{ii} = \sum_{k<n} S_{ik} \,\varphi_k \wedge \overline{\varphi}_k, \ \ \ \ \ \forall \ i<n, $$
where $S$ is a symmetric real $(n-1)\times (n-1)$ matrix. The vanishing of the first and third Bismut Ricci implies that the diagonal entries of $S$ vanish, also the sum of each row vanishes. When $n\leq 4$, this will force $S=0$, hence $R^b=0$. This establishes the {\bf Claim}.

Now we prove the assumption of the {\bf Claim} holds when $n\le 4$. Let us assume $n=4$ (the $n=3$ case is analogous but much easier). When $\{ a_1, a_2, a_3\}$ are all distinct, by (\ref{eq:6.8}) we know that $\theta^b$ is diagonal, so by the {\bf Claim} we get $R^b=0$. If $a_1=a_2=a_3$, then $a_1=\frac{\lambda}{3}$ as their sum needs to be $\lambda$, hence $b_1=b_2=b_3=\frac{2}{3}\lambda^2$. On the other hand, by (\ref{eq:BT2}) we know that $T^j_{ik}=0$ for any $1\leq i,j,k\leq 3$, thus (\ref{eq:6.9}) would yield  $\frac{2}{3}\lambda^2= \frac{2}{9}\lambda^2 +0$, which is a contradiction. We are therefore left with the case when $a_1=a_2\neq a_3$. By (\ref{eq:6.8}), we know that $\theta^b$ is block-diagonal, with a $2\times 2$ block on the upper left corner. Let us write $a_1=a$ and $b_1=b$, then  by (\ref{eq:6.9}) we get
\begin{eqnarray*}
 b & =  & 2\lambda \mbox{Re}(a) \ \ = \ \  2|a|^2 + 2 \delta;      \\
 b_3 & =  & 2\lambda \mbox{Re}(a_3) \ \ = \ \  2|a_3|^2 + 2 \delta_3;    \\
\delta & = & |T^1_{12}|^2 + |T^1_{13}|^2 + |T^1_{23}|^2 \ \ = \ \ |T^2_{12}|^2 + |T^2_{13}|^2 + |T^2_{23}|^2;  \\
\delta_3 & = & |T^3_{12}|^2 + |T^3_{13}|^2 + |T^3_{23}|^2.
\end{eqnarray*}
Since $a_3=\lambda -2a$ and $b_3=2\lambda^2 -2b$, the first two equations above give
\begin{equation} \label{eq:6.10}
b = 2\lambda \mbox{Re}(a) =  2|a|^2 + 2 \delta = 4|a|^2 + \delta_3.
\end{equation}
If $a=0$, then $b=0$ and $\delta =\delta_3=0$, so $T^i_{\ast \ast} = T^{\ast}_{i\ast}=0$ for $i=1,2$. This means that $E=\mbox{span}\{ e_1, e_2\}$ is contained in the kernel of $T$, thus is parallel under the Levi-Civita connection. So $E$ gives us a K\"ahler de Rham factor, contradicting with our assumption at the beginning of the proof. Therefore we may assume that $a\neq 0$, or equivalently, $b>0$.

If $b<\lambda^2$,  or equivalently, if $b_3>0$, then by (\ref{eq:BT2}) we know that the only possibly non-zero $T^j_{ik}$ for $1\leq i,j,k\leq 3$ is $T^3_{12}$. In particular, $\delta=0$ and $\delta_3=|T^3_{12}|^2$. By (\ref{eq:6.10}), $\delta_3= - 2|a|^2 <0$ which is impossible.

Now assume that $b=\lambda^2$. The three eigenvalues of the $B$ tensors (we always ignore the direction $e_n=e_4$ here) are $\{ \lambda^2, \lambda^2, 0\}$. In this case $b_3=0$, hence $a_3=0$ and $a=\frac{\lambda }{2}$, $\delta_3=0$, $\delta =\frac{\lambda^2}{4}$. By (\ref{eq:BT2}) we know that the only possibly non-zero $T^j_{ik}$ for $1\leq i,j,k\leq 3$ are $T^j_{i3}=P_{i\bar{j}}$, where $1\leq i,j\leq 2$. The trace of the $2\times 2$ matrix $P$ is zero, as $T^1_{13}+T^2_{23}=\eta_3=0$. Also, by (\ref{eq:6.10}) we know that  the sum of norm square of each column of $P$ is equal to $\delta = \frac{\lambda^2}{4}$. In particular, the two off diagonal elements of $P$ have equal norm. Let us write
$$ P = \left[ \begin{array}{cc} x & y \\ \rho y & -x \end{array} \right], $$
where $|\rho |=1$ and $|x|^2+|y|^2 = \frac{\lambda^2}{4}$. Note that for any unitary change of $\{ e_1, e_2\}$, the new frame $\{ \tilde{e}_1, \tilde{e}_2, e_3, e_4\}$ is still admissible, and $P$ is changed to $\tilde{P}=UPU^{\ast}$ where $U$ is the $\mathsf{U}(2)$-valued function changing $\{ e_1, e_2\}$ to $\{ \tilde{e}_1, \tilde{e}_2\}$. If $y\neq 0$, then in a small neighborhood we may chose $U$ so that the upper right entry $\tilde{y}$ of $\tilde{P}$ is zero. The lower left entry of $\tilde{P}$ is then zero as $\tilde{P}$ is again tracefree and with equal sum of norm square for each column. In other words, by performing a unitary change of $\{e_1, e_2\}$ if necessary, then under the new admissble frame we may assume that $P$ is diagonal, with $|T^1_{13}|=|x|=\frac{\lambda}{2}$ being a global constant. Replace $e_3$ by $\rho_3e_3$ for a suitable function with $|\rho_3|=1$, we may further assume that $T^1_{13}=|T^1_{13}|$, hence is a global positive constant. In summary, we may choose admissble frame so that the only non-zero torsion components are
$$ T^1_{14}=T^2_{24}=a, \ \ \ \ T^1_{13}=-T^2_{23} = a, \ \ \ \ \ a=\frac{\lambda}{2}. $$
We already know that $\theta^b$ is block diagonal, with blocks of size $2$, $1$, $1$, and $\theta^b_{44}=0$. Since $\nabla^bT=0$, we have
$$ 0 =  dT^1_{13} = \sum_r \big( T^1_{r3}\theta^b_{1r} + T^1_{1r}\theta^b_{3r} - T^r_{13}\theta^b_{r1} \big) = T^1_{13}\theta^b_{33},$$
hence $\theta^b_{33}=0$. In other words, the fixing of the gauge $T^1_{13}=|T^1_{13}|>0$ made our admissible frame $e$ to satisfy $\nabla^be_3=0$. Similarly, by $T^2_{13}=0$, we have
$$ 0 =  dT^2_{13} = \sum_r \big( T^2_{r3}\theta^b_{1r} + T^2_{1r}\theta^b_{3r} - T^r_{13}\theta^b_{r2} \big) = ( T^2_{23} - T^1_{13})\theta^b_{12} = -2T^1_{13} \theta^b_{12} ,$$
which implies that $\theta^b_{12}=0$, therefore  $\theta^b$ is diagonal.  Hence by the {\bf Claim} we get $R^b=0$. This completes the proof of Theorem \ref{prop6.11}, which is Theorem \ref{prop1.7a}.
\end{proof}

The above proof of the $n=4$ case of Conjecture \ref{conj6.1} is basically by brute force, which relies heavily on $n$ being small. The proof in fact implies the following

\begin{corollary} \label{coro-nb}
Suppose $(M^n,g)$ is a BTP manifold. Assume that the Bismut scalar curvatures satisfy $S^{b(1)}=S^{b(3)}$. Then $(M^n, g)$ is not K\"ahler if and only if it is not balanced.
\end{corollary}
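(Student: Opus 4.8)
The plan is to derive, for \emph{any} BTP manifold, a single pointwise scalar identity relating $S^{b(1)}-S^{b(3)}$ to the Chern torsion, and then read off the equivalence from it. Since a K\"ahler metric is automatically balanced, the only direction with content is that a balanced BTP metric with $S^{b(1)}=S^{b(3)}$ is K\"ahler; but both directions will drop out of the same identity.

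First I would note that \eqref{eq:6.4} holds on any BTP manifold, being a consequence of $\nabla^bT=0$ alone. Contracting \eqref{eq:6.4} over one pair of conjugate indices --- setting $\ell=k$ and summing, keeping $i,j$ free --- gives on the left $\Ric^{b(1)}_{i\bar j}-\Ric^{b(3)}_{i\bar j}$, the second contracted curvature $\sum_k R^b_{k\bar ji\bar k}$ being identified with $\Ric^{b(3)}_{i\bar j}$ exactly as in the proof of Lemma~\ref{lemma6.4}. On the right, among the five quadratic terms in the Chern torsion, two cancel after relabeling summation indices, one reproduces the $B$-tensor $B_{i\bar j}=\sum_{r,s}T^j_{rs}\overline{T^i_{rs}}$, and the remaining pair assembles into $-(\phi+\phi^{\ast})_{i\bar j}$, where $\phi^j_i=\sum_r T^j_{ir}\overline{\eta}_r$ and $\eta$ is the Gauduchon torsion $1$-form, as in the proof of Theorem~\ref{prop6.11}. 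We thus obtain the tensor identity $\Ric^{b(1)}-\Ric^{b(3)}=B-(\phi+\phi^{\ast})$, which specializes to the relation $B=\phi+\phi^{\ast}$ used in the proof of Theorem~\ref{prop6.11} when $\Ric^{b(1)}=\Ric^{b(3)}=0$; this serves as a consistency check.

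Taking the metric trace and using $\mbox{tr}\,B=|T|^2$ together with $\mbox{tr}\,\phi=\sum_{i,r}T^i_{ir}\overline{\eta}_r=\sum_r\eta_r\overline{\eta}_r=|\eta|^2$ (so $\mbox{tr}(\phi+\phi^{\ast})=2|\eta|^2$), I arrive at the pointwise identity
\begin{equation*}
S^{b(1)}-S^{b(3)}=|T|^2-2|\eta|^2 ,
\end{equation*}
valid on every BTP manifold. Under the hypothesis $S^{b(1)}=S^{b(3)}$ this forces $|T|^2=2|\eta|^2$ at each point. Since $g$ is K\"ahler precisely when $T\equiv0$ and balanced precisely when $\eta\equiv0$, the two conditions are equivalent, which is the assertion; note that, unlike Theorem~\ref{prop6.11}, no completeness or compactness is needed, as the argument is purely pointwise once \eqref{eq:6.4} is in hand. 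I expect the only delicate step to be the index bookkeeping in the contraction of \eqref{eq:6.4} --- matching $\sum_k R^b_{k\bar ji\bar k}$ with the third Bismut Ricci in the conventions of \cite{ZZ22} and tracking which quadratic torsion terms cancel versus survive --- but this is routine given Lemma~\ref{lemma6.4} and the proof of Theorem~\ref{prop6.11}.
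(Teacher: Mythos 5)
Your proof is correct and follows essentially the same route the paper intends: the corollary is stated as an immediate byproduct of the computations in Lemma \ref{lemma6.4} and Theorem \ref{prop6.11}, and your contraction of \eqref{eq:6.4} giving $\Ric^{b(1)}-\Ric^{b(3)}=B-(\phi+\phi^{\ast})$, hence $S^{b(1)}-S^{b(3)}=|T|^2-2|\eta|^2$, is exactly the identity that argument rests on (it specializes to $B=\phi+\phi^{\ast}$ and $\mathrm{tr}\,B=2|\eta|^2$ used there). The index bookkeeping you flag checks out: the first and fourth quadratic terms cancel under relabeling, the second and third assemble into $-(\phi+\phi^{\ast})$, and the fifth is $B$, so nothing further is needed.
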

\vspace{0.1cm}

Recall that a Hermitian manifold is said to be {\em Bismut K\"ahler-like} (or {\em BKL} for short, see \cite{AOUV} and \cite{YZ}), if the Bismut curvature tensor $R^b$ obeys all the K\"ahler symmetries: $R^b_{xyz\bar{w}}=0$ and  $R^b_{x\bar{y}z\bar{w}}= R^b_{z\bar{y}x\bar{w}}$ for any type $(1,0)$ tangent vectors $x,y,z,w$. When $n\geq 2$,  there are examples of non-K\"ahler manifolds which are Bismut K\"ahler-like, and
such metrics were classified in complex dimensions $n \leq 5$ (\cite{YZZ, ZZpluriclosed, ZZ23}). By the main result of \cite{ZZpluriclosed}, the BKL condition is equivalent to BTP plus pluriclosedness. (On the other hand, BKL and BAS are two sets that neither one is contained in the other, although they are both contained in the BTP set).

Now let us prove the following proposition stated in the introduction (= Proposition \ref{prop1.8a}), which is a special case of \cite[Theorem 3]{ZZ23}. Here we give an alternative proof as an application of Proposition \ref{prop:32} (or the more general version Theorem \ref{thm:generlizedholo}):

\begin{proposition} \label{prop6.7}
Let $(M^n,g)$ be a complete Hermitian manifold whose Bismut connection is Ambrose-Singer (BAS) and with vanishing first Bismut Ricci curvature. If $g$ is BKL, then $g$ is Bismut flat.
\end{proposition}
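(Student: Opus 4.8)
The plan is to pass to the universal cover, encode the Bismut data as a generalized symmetric holonomy system, split off the torsion directions, and then apply Proposition~\ref{prop:32} on the torsion-free part and Theorem~\ref{thm:generlizedholo} on the rest. First one reduces to the case where $M$ is complete, simply connected, BAS and BKL with $\Ric^{b(1)}=0$. Since $g$ is BKL the Bismut curvature obeys all K\"ahler symmetries, so the first, second and third Bismut Ricci coincide and the hypothesis gives $\Ric_{R^b}=0$, where $\Ric_{R^b}$ is the Riemannian contraction of $R^b$. Being BAS we have $\nabla^b R^b=0$ and $\nabla^b T^b=0$, and the Bismut torsion is totally skew, i.e. $\langle T^b(\cdot,\cdot),\cdot\rangle$ is a parallel $3$-form. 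By Theorem~\ref{thm:AS} and Nomizu's construction (Theorem~\ref{thm:Nomizu}), at a base point $p$ the data $S=\{V,R^b,T^b,G\}$, with $V=T_pM$ realified, $G$ the restricted Bismut holonomy group and $\mathfrak{g}$ its Lie algebra, form a generalized symmetric holonomy system: \eqref{eq:holo0} is Ambrose--Singer, \eqref{eq:B1} and \eqref{eq:B2} are the first and second Bianchi identities for a connection with parallel torsion and curvature, and \eqref{eq:GS} is $\nabla^b R^b=\nabla^b T^b=0$. The Ricci identity applied to the parallel tensor $T^b$ yields $R^b_{x,y}\cdot T^b=0$ for free, whence $B'(\mathfrak{g},V)=0$ by Lemma~\ref{lm:killing2}; and $T^b(x,\cdot)$ is skew-symmetric, so case (i) of Theorem~\ref{thm:generlizedholo} is the relevant one.

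The heart of the matter is a splitting modeled on the Chern/CAS analysis of \S2. Let $\mathcal{W}^b:=\langle\{T^b(x,y)\}\rangle$ be the span of the Bismut torsion and $\mathcal{N}^b$ its orthogonal complement, which is the kernel $\{x:\iota_x H=0\}$ of the torsion $3$-form; both are $\nabla^b$-parallel since $\nabla^b T^b=0$. Using the K\"ahler symmetries of $R^b$ --- this is where the BKL hypothesis enters essentially --- together with $\nabla^b T^b=0$ and the Bianchi identities, in direct analogy with the proof of Proposition~\ref{prop:21} and the decomposition theorems following it (where one shows $R|_{\mathcal{W}}=0$ for the Chern connection), one would show that $\mathcal{W}^b$ and $\mathcal{N}^b$ are $J$-invariant, that $\widetilde{M}$ splits as a Hermitian product $M_1\times M_2$ with $\mathcal{W}^b\subseteq T'M_1$ and $T'M_2\subseteq\mathcal{N}^b$, that $R^b|_{\mathcal{W}^b}=0$ --- so in particular $R^b|_{\operatorname{image}(T^b)}=0$ --- and that $M_2$ is K\"ahler, since on $\mathcal{N}^b$ the torsion $3$-form vanishes, hence $\nabla^b=\nabla^{LC}$ there and $d\omega|_{M_2}=0$. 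Granting this, $M_1$ is Bismut flat; and $M_2$, being K\"ahler with $\nabla^b|_{M_2}=\nabla^{LC}|_{M_2}$ and parallel curvature, is locally Hermitian symmetric, so its Chern connection is Ambrose--Singer and Proposition~\ref{prop:31} applies to each $G$-irreducible summand of $T'M_2$; the first Bismut Ricci of $M_2$ is just its K\"ahler Ricci, which therefore vanishes, and Proposition~\ref{prop:32} forces each summand to be flat (as in the proof of Corollary~\ref{coro:AK}, a Ricci-flat Hermitian symmetric space is flat). Hence $R^b\equiv 0$ on $\widetilde{M}$, i.e. $g$ is Bismut flat. A variant avoiding the product splitting is to decompose $V$ directly into $G$-irreducible summands and, on each summand carrying nonzero torsion, rerun the computation in the proof of Theorem~\ref{thm:generlizedholo}: the identities $B'([x,y],[x,y])=B(R^b_{x,y},R^b_{x,y})$ and $\lambda\,\Ric_{R^b}(x,x)=\sum_i B(R^b_{x,\epsilon_i},R^b_{x,\epsilon_i})$, in which the $\mathfrak{g}$-components of the brackets drop out thanks to $B'(\mathfrak{g},V)=0$, together with the negative definiteness of $B$ (Lemma~\ref{lm:killing1}), make that summand flat, the torsion-free summands being handled by Proposition~\ref{prop:32}.

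I expect the delicate point to be the splitting of the second paragraph: proving that the Bismut torsion and the Bismut curvature do not interact, i.e. $R^b|_{\operatorname{image}(T^b)}=0$, that $\mathcal{W}^b$ and $\mathcal{N}^b$ (equivalently, the $G$-irreducible summands of $V$) are compatible with $J$ and with $T^b$, and that the torsion directions lie inside a Bismut-flat factor. This is the Bismut analogue of the identity $R|_{\mathcal{W}}=0$ for the Chern connection, and it is precisely here that the K\"ahler-like hypothesis is needed; without it the curvature $2$-form of $\nabla^b$ has nonzero $(2,0)$ and $(0,2)$ components and no such splitting can hold, consistently with the nontrivial BAS examples of \S1. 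The remainder is then a faithful transcription of the relevant parts of \cite{NZ-CAS} from the Chern setting to the Bismut one.
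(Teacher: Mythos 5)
Your overall strategy is workable in outline, but it has a genuine gap at exactly the point you flag: the whole second paragraph hinges on the claims that $\widetilde M$ splits as $M_1\times M_2$ with $R^b|_{\mathcal W^b}=0$ (equivalently $R^b|_{\operatorname{image}(T^b)}=0$) and with $M_2$ K\"ahler, and none of this is proved. You justify it ``in direct analogy with'' the Chern-connection statements of \S 2, but those statements ($R|_{\mathcal W}=0$, the splitting off of $\mathcal N_0$) are derived in \cite{NZ-CAS} from identities special to the Chern connection (holomorphicity and parallelism of the sections spanning $\mathcal W$, the expression of the curvature through derivatives of the torsion), and they do not transfer verbatim to $\nabla^b$; you give no substitute argument. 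The same gap undermines your ``variant'': Theorem \ref{thm:generlizedholo} carries $\Rm|_{\operatorname{image}(T)}=0$ as a standing hypothesis, which consists of \emph{two} conditions --- $\Rm_{x,y}\cdot T=0$, which you do get for free from $\nabla^bT^b=0$ via the Ricci identity, and $\Rm_{x,T(y,z)}=0$, which you do not --- so you cannot ``rerun the computation'' on a torsion-carrying summand without first establishing the second condition.

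The irony is that none of this machinery is needed, because you have not exploited the one consequence of BKL that makes the proposition easy: the K\"ahler symmetries force $R^b$ to satisfy the torsion-free first Bianchi identity $\mathfrak S\{R^b_{x,y}z\}=0$ (all components with a $(2,0)$ or $(0,2)$ pair in the first two slots vanish, and the cyclic sum of the remaining $(1,1)$-components cancels by $R^b_{x\bar yz\bar w}=R^b_{z\bar yx\bar w}$). Hence $(T_pM,\,R^b,\,G)$ is already an \emph{ordinary} symmetric holonomy system in the sense of \S 3; the torsion never has to enter, and neither the generalized system of \S 5 nor any de Rham splitting is required. The only point left before invoking Proposition \ref{prop:32} is that Ricci-flatness descends to each $G$-irreducible summand $E_j$ of $T^{1,0}M$, and this again follows from the BKL symmetry: the curvature matrix is block-diagonal and each block $\Theta^b_j$ has entries that are combinations of $\varphi_i\wedge\overline\varphi_k$ with $e_i,e_k\in E_j$ only, so $\operatorname{tr}\Theta^b=0$ forces $\operatorname{tr}\Theta^b_j=0$ for every $j$. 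Proposition \ref{prop:32} then kills the curvature on each nontrivial irreducible summand, while any K\"ahler de Rham factor is locally Hermitian symmetric and Ricci-flat, hence flat. I would rewrite your proof along these lines rather than trying to repair the splitting.
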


\begin{proof}
Fix any $p\in M$ and let $V$ be the tangent space of $M$ at $p$. Note that the Bismut K\"ahler-like assumption guarantees that $\Rm^b$ obeys the first Bianchi identity, so the Bismut connection $\nabla^b$ gives a holonomy system, which is clearly a symmetric one. In order to apply Proposition \ref{prop:32}, we need to verify that that the Ricci curvature vanishes for each irreducible component. Decompose the tangent bundle into subbundles where the Bismut holonomy group acts irreducibly. Writing in complex frames, say $T^{1,0}M=\oplus_{j=1}^r E_j$, then the Bismut curvature form $\Theta^b$ is block-diagonal:
$$ \Theta^b = \left[ \begin{array}{ccc} \Theta^b_1 & & \\ & \ddots & \\ & & \Theta^b_r \end{array} \right]  , \ \ \Theta^b_j = (\Theta^b_{ik}), \ \ \ e_i, e_k \in E_j.$$
Since $\nabla^b$ is assumed to be K\"ahler-like, the entries of $\Theta^b_j$ are all combinations  of $\varphi_i\overline{\varphi}_k$ for $e_1, e_k \in E_j$ only. In particular, $\mbox{tr}(\Theta^b) =0$ if and only if $\mbox{tr}(\Theta^b_j) =0$ for each $j$. So if we assume that the Bismut curvature has vanishing Ricci, then each irreducible component will also have vanishing Ricci, thus one can apply Proposition \ref{prop:32} to conclude that the curvature vanishes. This completes the proof. Note that since the curvature is assumed to be parallel, so any K\"ahler de Rham factor will be locally Hermitian symmetric, hence it will be flat if it is Ricci flat.
\end{proof}

Recall that compact Bismut flat manifolds were classified \cite{WYZ}. They are quotients of Samelson spaces, namely, Lie groups with bi-invariant metrics and compatible left-invariant complex structures. Hermitian metrics with vanishing first Bismut Ricci are said to be {\em Calabi-Yau with torsion} (or {\em CYT} in short) in the literature. It is equivalent to the condition when the restricted holonomy group of $\nabla^b$ is contained in $SU(n)$.

\vspace{0.1cm}

An interesting question closely related to Conjecture \ref{conj6.1} is the following one raised by Garcia-Fern\'andez and Streets \cite[Question 3.58]{GFS}:

\begin{question}[Garcia-Fern\'andez and Streets] \label{question6.8}
Given a homogeneous  Riemannian manifold $(M,g)$ with an invariant $3$-form $H$, denote by $\nabla^H$ the metric connection  with totally skew-symmetric torsion $H$. If $\nabla^H$ has vanishing Ricci, then must it be flat?
\end{question}

Such a $(g,H)$ is called a {\em Bismut Ricci Flat} pair, or a {\em BRF pair} in short. This happens when and only when $H$ is harmonic and the Ricci curvature $\mbox{Ric}^g$ of the Levi-Civita connection $\nabla^g$ is given by
$$ \mbox{Ric}^g (x,y) = \frac{1}{4} \langle \iota_xH, \iota_yH \rangle $$
for any tangent vector $x$, $y$, where $\iota_x$ stands for contraction. This concept is important in generalized Riemannian geometry which corresponds to special types of generalized Einstein structures.

Garcia-Fern\'andez and Streets answered the question positively for real dimension $3$ in (a special case of) \cite[Proposition 3.55]{GFS}.
In \cite{Podesta-R1} and \cite{Podesta-R}, Podest\`a and Raffero investigated the question and answered it positively in  (real) dimension $4$, but in dimension $5$ or higher, they constructed counterexamples. To be more precise, in \cite{Podesta-R1} they constructed an explicit sequence $M_{p,q}$ of compact homogeneous $5$-manifolds with invariant closed $3$-form $H$ so that $\nabla^H$ is Ricci flat but not flat. Here $p$ and $q$ are any positive integers that are relatively prime.

Note that Question \ref{question6.8} is closely related to Conjecture \ref{conj6.1} but different. The latter assumes the manifold to be Hermitian and the Bismut connection to be Ambrose-Singer, but it does not require the torsion $3$-form to be closed. It would be interesting to understand all BRF pairs, and in particular all BRF pairs where $\nabla^H$ is also Ambrose-Singer.

\vspace{0.3cm}

\section{Gauduchon Ambrose-Singer manifolds}

In this section we consider $t$-GAS manifolds: compact Hermitian manifolds whose $t$-Gauduchon connection $\nabla^{(t)}$ is Ambrose-Singer (i.e., having parallel torsion and curvature). Assume $t\neq 0,2$.   First we note that there are such manifolds that are non-K\"ahler (hence not $\nabla^{(t)}$-flat by the theorem of Lafuente and Stanfield \cite{LS}). The simplest such example would be a compact quotient  of a complex semi-simple Lie group, equipped with the metric coming from the Cartan-Killing form (cf. \S 3 of \cite{PodestaZ}). Such a manifold is actually $t$-GAS for any $t\in {\mathbb R}$.

To be more precise, let $G$ be a simple complex Lie group and denote by ${\mathfrak g}$ its Lie algebra. Let us write ${\mathfrak g}_{\mathbb R}$ for the Lie algebra ${\mathfrak g}$ considered as a real Lie algebra. Then the Cartan decomposition of ${\mathfrak g}_{\mathbb R}$ is given by ${\mathfrak g}_{\mathbb R} ={\mathfrak u} + J {\mathfrak u}$, where ${\mathfrak u}$ is the Lie algebra of a compact simple Lie group $U$. If we denote by $K$ the Cartan-Killing form of ${\mathfrak g}_{\mathbb R}$, then the `metric coming from the Cartan-Killing form' is the one given by
$$ g|_{{\mathfrak u} \times {\mathfrak u}} = -K, \ \ \ g({\mathfrak u}, J{\mathfrak u})=0, \ \ \ g|_{J{\mathfrak u} \times J{\mathfrak u}} = K. $$
By the proof of Theorem 1.2 of \cite{PodestaZ}, we know that $(G,g)$ is BTP. It is clearly also Chern flat.  By the first equality in Lemma 7.2 below, the BTP condition would mean that the equality holds for $t=2$, thus the big sigma term on the right hand side would vanish. So the same equality again would imply that $\nabla^{(t)}_{\overline{e}_i}T=0$. Similarly, one could argue and get  $\nabla^{(t)}_{e_i}T=0$, so $\nabla^{(t)}T=0$ (which is equivalent to $\nabla^{(t)}T^{(t)}=0$), namely, the metric is also $t$-Gauduchon torsion parallel. By the second equality in Lemma 7.2, we know that $\nabla^{(t)}R^{(t)}=0$. So $(G,g)$ is $t$-GAS for all $t$.

Secondly, we notice that for any $t\neq 2$, a compact $t$-GAS manifold is always balanced:

\begin{proposition} \label{lemma6.9}
Let $(M^n,g)$ be a compact Hermitian manifold that is $t$-GAS for some $t\neq 2$. Then $g$ is balanced.
\end{proposition}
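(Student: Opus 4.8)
\emph{Plan of proof.} The idea is to manufacture a $\nabla^{(t)}$-parallel $1$-form out of the torsion of $\nabla^{(t)}$, show that on a compact manifold such a form must vanish, and then recognize it as a nonzero multiple of $(2-t)$ times the Gauduchon torsion $1$-form $\eta$; recall that $g$ is balanced exactly when $\eta\equiv 0$. Concretely, set
$$
\tau^{(t)}(v)\ :=\ \sum_i g\big(T^{(t)}(\varepsilon_i,v),\varepsilon_i\big)
$$
for a local orthonormal frame $\{\varepsilon_i\}$, where $T^{(t)}$ is the torsion of $\nabla^{(t)}$. Since $\nabla^{(t)}$ is a metric connection and, by the $t$-GAS hypothesis, $\nabla^{(t)}T^{(t)}=0$ (only parallelism of the torsion, not of the curvature, is used here), the $1$-form $\tau^{(t)}$ --- being a metric contraction of the two parallel tensors $T^{(t)}$ and $g$ --- satisfies $\nabla^{(t)}\tau^{(t)}=0$.

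Next I would invoke compactness through the following elementary identity. For any metric connection $D$ with torsion $\mathcal{T}$ on a compact $(M^n,g)$ and any $1$-form $\alpha$ with $D\alpha=0$, the divergence theorem gives $\int_M \operatorname{div}_{\nabla^g}(\alpha^\sharp)\,dV=0$; writing $\nabla^g=D-S$ with the difference tensor $S$ characterized by $2g(S_xy,z)=g(\mathcal{T}(x,y),z)-g(\mathcal{T}(y,z),x)+g(\mathcal{T}(z,x),y)$, and using $D\alpha^\sharp=0$ together with the skew-symmetry of $\mathcal{T}$ (which kills the $\mathcal{T}(\varepsilon_i,\varepsilon_i)$ terms), one obtains
$$
\operatorname{div}_{\nabla^g}(\alpha^\sharp)\ =\ -\sum_i g\big(S_{\varepsilon_i}\alpha^\sharp,\varepsilon_i\big)\ =\ -\sum_i g\big(\mathcal{T}(\varepsilon_i,\alpha^\sharp),\varepsilon_i\big).
$$
Taking $D=\nabla^{(t)}$, $\mathcal{T}=T^{(t)}$ and $\alpha=\tau^{(t)}$ (legitimate by the previous paragraph), the right-hand side is precisely $-|\tau^{(t)}|^2$, so $\int_M|\tau^{(t)}|^2\,dV=0$ and hence $\tau^{(t)}\equiv 0$.

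It remains to identify $\tau^{(t)}$. Because $\nabla^{(t)}=(1-\tfrac t2)\nabla+\tfrac t2\nabla^b$ is an affine combination with coefficients summing to $1$, its torsion is $T^{(t)}=(1-\tfrac t2)T+\tfrac t2 T^b$, and so by linearity $\tau^{(t)}=(1-\tfrac t2)\tau+\tfrac t2\tau^b$, where $\tau$ and $\tau^b$ are the torsion-trace $1$-forms of the Chern and Bismut connections. The Bismut torsion is totally skew-symmetric, whence $\sum_i g(T^b(\varepsilon_i,v),\varepsilon_i)=0$ for all $v$, i.e. $\tau^b\equiv 0$. On the other hand, a short computation in a local unitary frame (via the structure equations, in the conventions around (3.9) of \cite{NZ-CAS}; the Chern torsion is of pure type, so only the $e_i\otimes\overline{e}_i$ part of $g$ contributes) shows that $\tau$ is a nonzero constant multiple of $\eta+\overline{\eta}$; in particular $\tau\equiv 0$ iff $\eta\equiv 0$. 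Combining, $0=\tau^{(t)}=(1-\tfrac t2)\tau$, and since $t\neq 2$ this forces $\tau\equiv 0$, hence $\eta\equiv 0$, i.e. $g$ is balanced.

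All the computations here are routine; the only point deserving care is the frame bookkeeping that pins $\tau$ down as a nonzero multiple of $\eta+\overline{\eta}$ and isolates the scalar factor $1-\tfrac t2$, so that the argument degenerates precisely at $t=2$ --- consistent with the fact, recalled in \S 6, that the Bismut connection genuinely admits non-balanced Ambrose--Singer examples such as the Hopf manifolds.
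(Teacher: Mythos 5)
Your argument is correct, and it reaches the conclusion by a genuinely different route than the paper. The paper works entirely inside the Hermitian structure equations: it computes $\partial\overline{\partial}(\omega^{n-1}) = \frac{\sqrt{-1}}{n}\{\sum_i \eta_{i,\overline{i}} + (\frac{t}{2}-1)|\eta|^2\}\,\omega^n$ in a frame where $\theta^{(t)}$ vanishes at a point, kills the divergence term $\sum_i\eta_{i,\overline{i}}$ pointwise using $\nabla^{(t)}T=0$, and then integrates by Stokes to get $(\frac{t}{2}-1)\int_M|\eta|^2\omega^n=0$. You instead isolate a purely Riemannian lemma: for any metric connection $D$ on a compact manifold with $D$-parallel torsion, the torsion-trace $1$-form $\tau^D$ is $D$-parallel and satisfies $\operatorname{div}_{\nabla^g}((\tau^D)^\sharp)=-|\tau^D|^2$ pointwise, so it must vanish; the Hermitian geometry enters only in the identification $\tau^{(t)}=(1-\tfrac t2)\tau+\tfrac t2\tau^b$ with $\tau^b\equiv 0$ (total skew-symmetry of the Bismut torsion) and $\tau$ a nonzero multiple of $\eta+\overline{\eta}$. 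I checked the key identity: with $2g(S_xy,z)$ as you wrote it, $\sum_i g(S_{\varepsilon_i}\alpha^\sharp,\varepsilon_i)=\sum_i g(\mathcal{T}(\varepsilon_i,\alpha^\sharp),\varepsilon_i)$ indeed holds, and with $\alpha=\tau^{(t)}$ the right side is $|\tau^{(t)}|^2$; since your coefficient comes out as $(1-\tfrac t2)^2|\tau|^2\geq 0$ you even avoid any sign discussion. What your route buys is generality and transparency about where $t=2$ degenerates (the Bismut torsion trace vanishes identically, so the lemma says nothing); what the paper's route buys is that the same frame computation of $\overline{\partial}\eta$ is reused elsewhere in \S 7 (e.g.\ in deriving (\ref{eq:7.5})). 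The only step you leave implicit, the unitary-frame bookkeeping pinning $\tau$ to a nonzero multiple of $\eta+\overline{\eta}$, is routine and correct because the Chern torsion has no $(1,1)$-component.
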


\begin{proof}
Under any local unitary frame $e$, let us denote by $\theta^{(t)}$ the matrix of the $t$-Gauduchon connection $\nabla^{(t)}$, and write  $\theta^{(0)}=\theta$ for the Chern connection. Then $\theta^{(t)} = \theta + \frac{t}{2} \gamma $ where $\gamma'_{ij}=\sum_k T^j_{ik}\varphi_k$ and $\gamma = \gamma' - \,^t\!\overline{\gamma}'$. Here as before we denoted by $\varphi$ the coframe dual to $e$, and by $T^j_{ik}$ the components of the Chern torsion under $e$, namely, $T(e_i,e_k)=\sum_jT^j_{ik}e_j$. Denote by $\eta = \sum_i \eta_i \varphi_i$ the Gauduchon torsion $1$-form, defined by $\eta_i=\sum_k T^k_{ki}$. Then one has $\partial (\omega^{n-1})=-\eta \wedge \omega^{n-1}$, where $\omega =\sqrt{-1}\sum_k \varphi_k \wedge \overline{\varphi}_k$ is the K\"ahler form of $g$.

The structure equation is $d\varphi = -\,^t\!\theta \wedge \varphi + \tau$, where $\tau = \frac{1}{2}\,^t\!\gamma' \wedge \varphi$ is the column vector of Chern torsion. For any fixed $p\in M$, let us choose our local unitary frame  $e$ so that $\theta^{(t)}$ vanishes at $p$. Then at $p$ we have $\theta = -\frac{t}{2}\gamma$, thus by the structure equation we know that $\overline{\partial} \varphi_r=  \frac{t}{2} \sum_{i,j}\overline{T^j_{ri}} \,\varphi_i \wedge \overline{\varphi}_j$ at $p$. We compute that
$$ \overline{\partial}\eta = \sum_{i,j} \{ - \eta_{i,\overline{j}} +  \frac{t}{2}\sum_r \eta_r \overline{T^i_{rj}} \} \,\varphi_i \wedge \overline{\varphi}_j , $$
where the index after comma stands for covariant derivative with respect to $\nabla^{(t)}$. From this we get
$$ n\sqrt{-1} \,\overline{\partial}\eta \wedge \omega^{n-1} = - \big(  \sum_i \eta_{i,\overline{i}} + \frac{t}{2}|\eta|^2\big) \omega^n, $$
where $|\eta|^2 = \sum_i |\eta_i|^2$. Similarly, $n \sqrt{-1} \, \eta \wedge \overline{\eta}\wedge \omega^{n-1} = |\eta|^2 \omega^n$. On the other hand,
$$ \partial \overline{\partial} (\omega^{n-1}) = \overline{\partial} (\eta \wedge \omega^{n-1}) = (\overline{\partial}\eta + \eta \wedge \overline{\eta})\wedge \omega^{n-1} = \frac{\sqrt{-1}}{n} \{ \sum_i \eta_{i,\overline{i}} + (\frac{t}{2}-1)|\eta |^2 \} \omega^n.  $$
When the metric is $t$-GAS, we have $\nabla^{(t)}T=0$ hence $\eta_{i,\overline{j}}=0$. Integrating the last equation above over the compact manifold $M$, we get $(\frac{t}{2}-1) \int_M |\eta|^2\omega^n =0$. Hence for any $t\neq 2$ we would have $\eta=0$, which means that $g$ is balanced.
\end{proof}

We remark that the above result also was proved in \cite{NZ-CAS}, namely for the $t=0$ case. It indicates that the Bismut connection is a bit special in view of Theorem \ref{prop6.11}. Denote by  $T$, $R$ the torsion and curvature of the Chern connection $\nabla$, and the components of $T$ under a unitary frame $e$ are given by $T(e_i,e_k)=\sum_j T^j_{ik}e_j$. Denote by $R^{(t)}$ the curvature of the $t$-Gauduchon connection $\nabla^{(t)}$. We have the following

\begin{lemma}
Let $(M^n,g)$ be a Hermitian manifold. Then under any local unitary frame $e$ it holds:
\begin{eqnarray}
&&T^{\ell}_{ik,\bar{j}} + R_{i\bar{j}k\bar{\ell}} - R_{k\bar{j}i\bar{\ell}} +\frac{t}{2} \sum_r \{ T^{\ell}_{ir} \overline{T^{k}_{jr} } - T^{\ell}_{kr} \overline{T^{i}_{jr} }  - T^{r}_{ik} \overline{T^{r}_{j\ell } }  \} \ = \ 0,     \label{eq:6.11}\\
&& R^{(t)}_{i\bar{j}k\bar{\ell}} - R_{i\bar{j}k\bar{\ell}} - \frac{t}{2} \{ T^{\ell}_{ik,\bar{j} } + \overline{T^{k}_{j\ell , \bar{i} } }\}  +  \frac{t^2}{4} \sum_r  \{ T^{j}_{ir} \overline{T^{k}_{\ell r} } + T^{\ell}_{kr} \overline{T^{i}_{jr} } + T^{r}_{ik} \overline{T^{r}_{j\ell } } - T^{\ell}_{ir} \overline{T^{k}_{jr} } \} \ = \ 0  \label{eq:6.12}
\end{eqnarray}
for any $1\leq i,j,k,\ell \leq n$, where index after comma stands for covariant derivative with respect to $\nabla^{(t)}$.
\end{lemma}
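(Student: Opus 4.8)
The plan is to carry out everything in Cartan's formalism, building on the connection-matrix identity already recorded in the proof of Proposition~\ref{lemma6.9}: under a local unitary frame $e$ with dual coframe $\varphi$, the matrix of $\nabla^{(t)}$ is $\theta^{(t)} = \theta + \tfrac{t}{2}\gamma$, where $\theta=\theta^{(0)}$ is the Chern connection matrix, $\gamma = \gamma' - {}^{t}\overline{\gamma}'$, and $\gamma'_{ij}=\sum_k T^j_{ik}\varphi_k$. The two identities are then extracted from, respectively, (a) the curvature matrix of $\nabla^{(t)}$ expressed through that of $\nabla$, and (b) the first Bianchi identity of the Chern connection.

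First I would compute the curvature of $\nabla^{(t)}$. Writing $\Theta$ and $\Theta^{(t)}$ for the curvature matrices of $\nabla$ and $\nabla^{(t)}$ (with the paper's sign conventions, so that $\Theta$ is of pure type $(1,1)$), the quadratic-in-$\theta^{(t)}$ term splits into a $\theta$--$\theta$ piece (which, together with $d\theta^{(t)}$, reproduces $\Theta$), cross terms $\theta$--$\gamma$ and $\gamma$--$\theta$ (which combine with $d\gamma$ into the Chern exterior covariant derivative $D\gamma$), and a $\gamma$--$\gamma$ piece, giving \[ \Theta^{(t)} = \Theta + \tfrac{t}{2}\,D\gamma + \tfrac{t^2}{4}\,\gamma\wedge\gamma. \] I would then unwind the two correction terms using the Chern structure equation $d\varphi = -{}^{t}\theta\wedge\varphi + \tau$ with $\tau = \tfrac12{}^{t}\gamma'\wedge\varphi$: the $(1,1)$-part of $D\gamma'$ yields the $\overline{\partial}$-type Chern covariant derivatives $\nabla_{\bar j}T^{\ell}_{ik}$ together with the torsion-quadratic terms that appear once $\overline{\partial}\varphi$ is substituted, the $(1,1)$-part of $D({}^{t}\overline{\gamma}')$ yields the conjugate $\overline{\nabla_{\bar i}T^k_{j\ell}}$, and the $(1,1)$-part of $\gamma\wedge\gamma$---namely $-\,\gamma'\wedge{}^{t}\overline{\gamma}' - {}^{t}\overline{\gamma}'\wedge\gamma'$---supplies the remaining torsion-quadratic terms. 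Reading off the coefficient of $\varphi_i\wedge\overline{\varphi}_j$ in the $(k\ell)$-entry, and converting the Chern covariant derivatives of $T$ into $\nabla^{(t)}$-covariant derivatives via the connection difference $\nabla^{(t)}-\nabla$ (given in the frame $e$ by the matrix $\tfrac{t}{2}\gamma$, whose action as a derivation on the three slots of $T^{\ell}_{ik}$ produces a sum of terms, each $\tfrac{t}{2}$ times a product of two Chern torsion components, which merge into the $\tfrac{t^2}{4}$-coefficient terms), produces exactly (\ref{eq:6.12}).

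For (\ref{eq:6.11}) I would instead invoke the first Bianchi identity of the Chern connection: differentiating the structure equation once more ($d\,d\varphi=0$) and taking the component of type $(2,1)$, in which the torsion-torsion term $\mathfrak{S}\{T(T(\cdot,\cdot),\cdot)\}$ drops out because the Chern torsion is of pure type $(2,0)$, gives the classical identity $T^{\ell}_{ik,\bar j} + R_{i\bar j k\bar\ell} - R_{k\bar j i\bar\ell}=0$ in which the comma denotes the Chern ($t=0$) covariant derivative. Rewriting $\nabla_{\bar j}T$ in terms of $\nabla^{(t)}_{\bar j}T$ through the same connection difference $\tfrac{t}{2}\gamma$, acting as a derivation on the three slots of $T^{\ell}_{ik}$, introduces precisely the $\tfrac{t}{2}\sum_r\{\cdots\}$ terms, yielding (\ref{eq:6.11}). (Alternatively, one could derive (\ref{eq:6.11}) straight from the first Bianchi identity of $\nabla^{(t)}$ itself, but that requires first expressing the torsion of $\nabla^{(t)}$ in terms of the Chern torsion, so the route through the Chern Bianchi identity looks shorter.)

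The only genuinely delicate part is the bookkeeping of the paper's index and sign conventions---the factor-of-two normalization $T(e_i,e_k)=\sum_j T^j_{ik}e_j$ (as in Lemma~\ref{lemma6.4}), the transposes in the structure equation, which slot of $R$ carries the conjugation, and keeping straight which torsion-quadratic term originates from $\gamma\wedge\gamma$, which from the $\tau$-correction inside $D\gamma'$, and which from the $\nabla\to\nabla^{(t)}$ conversion. A reliable way to pin down every sign is to specialize: at $t=0$ the formulas must collapse to the standard Chern mixed-Bianchi identity and to the tautology $R^{(0)}=R$, while at $t=2$ under the BTP hypothesis $\nabla^b T=0$ they must reproduce (\ref{eq:6.4}) and (\ref{eq:6.5}) in the proof of Lemma~\ref{lemma6.4}; these two checks determine the identities completely.
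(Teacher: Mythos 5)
Your proposal is correct and follows essentially the same route as the paper: both arguments rest on the relation $\theta^{(t)}=\theta+\tfrac{t}{2}\gamma$, the structure equation, and the first Bianchi identity $d\tau=-\,^t\theta\wedge\tau+\,^t\Theta\wedge\varphi$, extracting (\ref{eq:6.11}) from the $(2,1)$-part of the latter and (\ref{eq:6.12}) from $\Theta^{(t)}-\Theta$. The only cosmetic difference is that the paper normalizes $\theta^{(t)}|_p=0$ so that the $\nabla^{(t)}$-covariant derivatives appear directly, whereas you work covariantly and convert $\nabla\to\nabla^{(t)}$ at the end via the difference tensor $\tfrac{t}{2}\gamma$ — both yield the same identities.
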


\begin{proof} Let $\theta$, $\theta^{(t)}$, $\gamma$ , $\tau$ be as in the proof of Lemma \ref{lemma6.9}. Denote by $\Theta$, $\Theta^{(t)}$ the matrix of Chern curvature and $t$-Gauduchon curvature, respectively. We have $\theta^{(t)}=\theta + \frac{t}{2}\gamma$, and $^t\!\gamma' \wedge \varphi = 2\tau$. Take the exterior differential on the structure equation $d\varphi = -\,^t\!\theta \wedge \varphi + \tau$, we get the first Bianchi identity:
\begin{equation}
d\tau = - \,^t\!\theta \wedge \tau + \,^t\!\Theta \wedge \varphi.    \label{eq:Bianchi}
\end{equation}
Fix any point $p\in M$. Choose the local unitary frame $e$ near $p$ so that $\theta^{(t)}|_p=0$. Then at $p$ we have $\theta = -\frac{t}{2}\gamma$, hence
 \begin{equation}  \label{eq:7.4}
 \partial \varphi =(1-t)\tau = \frac{1}{2}(1-t) \,^t\!\gamma' \wedge \varphi \ \ \ \mbox{and} \ \ \ \overline{\partial} \varphi = -\frac{t}{2} \overline{\gamma}' \wedge \varphi \ \ \ \ \mbox{at} \ p.
 \end{equation}
Now take the $(2,1)$-part in (\ref{eq:Bianchi}), at $p$ we have
$$ \sum_k \{ \overline{\partial}   \gamma'_{k\ell} + \frac{t}{2} \sum_r \big( \overline{\gamma}'_{\ell r} \wedge \gamma'_{kr}  + \gamma'_{r\ell } \wedge \overline{\gamma}'_{rk}  \big) + 2\Theta_{k\ell} \} \wedge \varphi_k \, = \, 0
$$
for any $1\leq \ell \leq n$. This leads us to the identity (\ref{eq:6.11}). On the other hand, by definition we have  $\Theta = d\theta - \theta \wedge \theta$  and  $\Theta^{(t)} = d\theta^{(t)} - \theta^{(t)} \wedge \theta^{(t)}$. So at $p$ we get
$$ (\Theta^{(t)})^{1,1} - \Theta \ = \ \frac{t}{2}\{  \overline{\partial}   \gamma'- \partial \,^t\!\overline{\gamma}'\} -\frac{t^2}{4} \{ \gamma' \wedge \,^t\!\overline{\gamma}' + \,^t\!\overline{\gamma}' \wedge \gamma' \} .$$
This leads to the identity (\ref{eq:6.12}), and the lemma is proved.
 \end{proof}

\begin{proof}[{\bf Proof of Theorem \ref{prop1.10}}.]
Suppose $t\neq 0,2$ and $(M^n,g)$ is a compact $t$-GAS manifold with vanishing first and third $t$-Gauduchon Ricci. By Lemma \ref{lemma6.9} we know that $g$ is balanced. For $1\leq i\leq 3$, denote by $\Ric^{(t)(i)}$ the $i$-th Ricci of $\nabla^{(t)}$, and by $\Ric^{(i)}$ the $i$-th Chern Ricci. Since $\eta =0$ and  $\Ric^{(t)(1)}=0$, by letting $k=\ell$ and sum up in (\ref{eq:6.12}), we get $\Ric^{(1)}=0$. Similarly, since $\eta=0$ and $\Ric^{(t)(3)}=0$,  by letting $i=\ell$ and sum up in (\ref{eq:6.12}), we get $\Ric^{(3)}=\frac{t^2}{4} B$, where $B_{k\bar{j}} = \sum_{r,s} T^j_{rs} \overline{T^k_{rs} } $. Now if we let $k=\ell$ and sum up in (\ref{eq:6.11}), we get $\Ric^{(1)}-\Ric^{(3)}=0$. Putting these together, we get $B=0$, hence $\mbox{tr}(B) = |T|^2 =0$. Therefore $T=0$ and $g$ is K\"ahler. This completes the proof of Theorem \ref{prop1.10}.
\end{proof}

  We remark that if we drop the assumption of  $\Ric^{(t)(1)}=0$ in Theorem \ref{prop1.10}, then we only get $\Ric^{(1)}=\frac{t^2}{4}B$, and we have

\begin{proposition}
 Let $(M^n,g)$ be a $t$-GAS manifold with $t\neq 0,2$, and assume that the third $t$-Gauduchon Ricci vanishes, namely, $\Ric^{(t)(3)}=0$. Then $g$ is balanced and $\Ric^{(1)}=\frac{t^2}{4}B$. If $t<0$, then $g$ must be K\"ahler and flat. If $t>0$, then $M$ cannot be Fano.
 \end{proposition}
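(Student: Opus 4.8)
Since $t\neq 2$ and $M$ is compact, Proposition~\ref{lemma6.9} gives $\eta\equiv 0$, i.e. $g$ is balanced; and since $\nabla^{(t)}$ is Ambrose--Singer we have $\nabla^{(t)}T=0$, so every covariant derivative ``$_{,\bar j}$'' occurring in (\ref{eq:6.11})--(\ref{eq:6.12}) vanishes. Now I would contract exactly as in the proof of Theorem~\ref{prop1.10}: setting $i=\ell$ and summing in (\ref{eq:6.12}), the terms containing $\sum_i T^i_{ir}=-\eta_r$ drop out, two of the torsion--quadratic terms cancel against one another after relabelling and using $T^j_{ik}=-T^j_{ki}$, and the surviving one is $\sum_{i,r}T^j_{ir}\overline{T^k_{ir}}=B_{k\bar j}$; since $\Ric^{(t)(3)}=0$ this yields $\Ric^{(3)}=\tfrac{t^2}{4}B$. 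Setting $k=\ell$ and summing in (\ref{eq:6.11}), the torsion--quadratic part again cancels in pairs (using $\sum_kT^k_{kr}=\eta_r=0$) and one gets $\Ric^{(1)}-\Ric^{(3)}=0$. Hence $\Ric^{(1)}=\tfrac{t^2}{4}B$, the first assertion; this part is routine once those contractions are written out. Note also that $B$ is a non-negative Hermitian form and is $\nabla^{(t)}$-parallel (being built from the parallel tensor $T$), so its eigenvalues $b_1\ge\cdots\ge b_n\ge0$ are global constants and $\rho=\tfrac{t^2}{4}\sigma_B\ge0$ is a closed semi-positive representative of $2\pi c_1(M)$. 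If $g$ is K\"ahler then $T=0$, $\nabla^{(t)}=\nabla^{g}$ is Ambrose--Singer hence $(M,g)$ is locally Hermitian symmetric, and $\Ric^{(t)(3)}=0$ makes it Ricci flat, so it is flat (a finite cover of a complex torus). From now on one may assume $\operatorname{tr}(B)=|T|^2>0$, a positive constant.

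\textbf{Step 2: the case $t>0$.} Here I would show $M$ cannot be Fano. Because $d\rho=0$ and $t\neq0$ we have $d\sigma_B=0$; expanding this in a local unitary frame that diagonalises the parallel tensor $B$ and is $\nabla^{(t)}$-adapted at the chosen point, and using the Chern structure equations together with $\eta=0$ and $\nabla^{(t)}B=0$ --- the same mechanism that produced (\ref{eq:BT})--(\ref{eq:BT2}) in the BTP case --- gives a relation of the same shape as (\ref{eq:BT2}) but with $t$-dependent coefficients, tying the constants $b_i$ to the torsion components $T^j_{ik}$. Specialising the free indices (taking the ``output'' index to label the smallest eigenvalue) forces $T^n_{\ast\ast}=0$ whenever $g$ is not K\"ahler, whence $b_n=B_{n\bar n}=\sum_{r,s}|T^n_{rs}|^2=0$, a contradiction if $b_n>0$. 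Thus $\det B\equiv0$, so $\rho^{\,n}\equiv0$ as a form, hence $c_1(M)^n=[\rho]^n/(2\pi)^n=0$; an ample $K_M^{-1}$ would have $c_1(M)^n>0$, so $M$ is not Fano.

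\textbf{Step 3: the case $t<0$.} This is the crux, and the step I expect to be the main obstacle, because the desired conclusion $B\equiv0$ is strictly stronger than the degeneracy of $B$ obtained in Step~2, so a genuinely sign-sensitive input is needed; indeed all contractions of (\ref{eq:6.11})--(\ref{eq:6.12}) reduce to the identities already used in Step~1, so the extra ingredient must come from elsewhere. My plan is to exploit that the $t$-Gauduchon torsion differs from the Chern torsion by $\tfrac{t}{2}$ times the (parallel) $(1,1)$-torsion of the Bismut connection, so that feeding $\Ric^{(3)}=\tfrac{t^2}{4}B$, $\Ric^{(t)(3)}=0$ and the parallelism into (\ref{eq:6.12}) and the generalized Bianchi identities of \S5 for the torsion-ful connection $\nabla^{(t)}$ should produce an identity of the form $\alpha(t)\,\sigma_B=(\text{a non-negative torsion-quadratic }(1,1)\text{-form})$ with $\alpha(t)<0$ precisely when $t<0$; the two sides then have incompatible signs unless both vanish, forcing $\sigma_B\equiv0$, i.e. $T\equiv0$, so $g$ is K\"ahler and one is back in the flat case of Step~1. (Alternatively, a Bochner--Weitzenb\"ock argument for the parallel tensor $T$, whose norm is a constant on the compact $M$, should yield the same sign obstruction.) Pinning down the torsion--quadratic bookkeeping and, in particular, the sign $\alpha(t)<0$ is the delicate point; when $t>0$ the same identity is consistent, which is exactly why only ``not Fano'' survives there.
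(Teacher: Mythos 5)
Your Step 1 and Step 2 follow the paper's route: balancedness comes from Proposition \ref{lemma6.9}, the contractions of (\ref{eq:6.11})--(\ref{eq:6.12}) give $\Ric^{(1)}=\Ric^{(3)}=\frac{t^2}{4}B$, and the closedness of $\sigma_B=\frac{4}{t^2}\rho$ expanded via (\ref{eq:7.4}) in a frame diagonalizing the parallel tensor $B$ yields the pointwise identity
$$\bigl\{\, tB_{i\bar i}+tB_{k\bar k}-2(t-1)B_{j\bar j}\,\bigr\}\,T^j_{ik}=0,$$
from which evaluating at the smallest eigenvalue handles $t>0$ exactly as you describe (you leave the coefficient unverified, but the mechanism and conclusion are the paper's).

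The genuine gap is Step 3. You assert that for $t<0$ ``the extra ingredient must come from elsewhere'' and go hunting for a new identity of the form $\alpha(t)\,\sigma_B=(\text{nonnegative form})$ or a Bochner argument, neither of which you carry out. In fact no new ingredient is needed: the identity displayed above, which you already have from Step 2, finishes the case $t<0$ once you apply it to the \emph{largest} eigenvalue of $B$ instead of the smallest. Writing $\alpha=2(t-1)/t$, one has $\alpha<2$ for $t>0$ but $\alpha>2$ for $t<0$. If $b_1$ is the largest eigenvalue and $b_1>0$, then for every $i,k$ one gets $b_i+b_k-\alpha b_1\leq (2-\alpha)b_1<0$, so the coefficient is nonzero and $T^1_{ik}=0$ for all $i,k$; but then $b_1=B_{1\bar1}=\sum_{r,s}|T^1_{rs}|^2=0$, a contradiction. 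Hence $b_1=0$, i.e.\ $B\equiv0$, $T\equiv0$, and $g$ is K\"ahler, hence flat by the symmetric-space observation you already made in Step 1. So the sign-sensitivity you correctly anticipated lives entirely in whether $\alpha$ is above or below $2$ --- which dictates whether the extremal eigenvalue to test is the largest or the smallest --- and your proposed detour through a new curvature identity is both unnecessary and, as written, not a proof.
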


\begin{proof} By the condition $\Ric^{(1)}=\frac{t^2}{4}B$ we know that the global $(1,1)$-form $\sigma_B = \sqrt{-1}\sum_{i,j} B_{i\bar{j}} \varphi_i \wedge \overline{\varphi}_j$ is equal to a positive constant multiple of the first Chern Ricci form, hence is $d$-closed. Since $\nabla^{(t)}B=0$, the eigenvalues of $B$ are all constants. Choose $e$ so that $B$ is diagonal, then by $d\sigma_B=0$ we get from (\ref{eq:7.4}) that:
 \begin{equation} \label{eq:7.5}
 \{ tB_{i\bar{i}} + tB_{k\bar{k}} -2(t-1)B_{j\bar{j}} \} \,T^j_{ik} =0 , \ \ \ \ \ \ \ \ \forall \ 1\leq i,j,k\leq n.
\end{equation}
Write $\alpha = \frac{2(t-1)}{t} = 2(1 - \frac{1}{t})$. When $t<0$, we have $\alpha >2$. Let us choose $e$ so that $B$ is diagonal and $B_{1\bar{1}}$ is the largest eigenvalue. If $B_{1\bar{1}} >0$, then
$$ \alpha B_{1\bar{1}} - B_{i\bar{i}} - B_{i\bar{i}} \geq (\alpha -2)B_{1\bar{1}} > 0, $$
hence by (\ref{eq:7.5}) we get $T^1_{ik}=0$ for any $i,k$. Hence $B_{1\bar{1}}=0$ by the definition of $B$. This shows that $B_{1\bar{1}}=0$ hence $B=0$, so $g$ is K\"ahler, hence flat as it is assumed to be $t$-GAS.
If $t>0$, then $\alpha < 2$. Let us choose $e$ so that $B$ is diagonal and $B_{1\bar{1}}$ is the smallest eigenvalue. If $B_{1\bar{1}} >0$, then
$$ \alpha B_{1\bar{1}} - B_{i\bar{i}} - B_{i\bar{i}} \leq (\alpha -2)B_{1\bar{1}} < 0, $$
so by (\ref{eq:7.5}) we again get $T^1_{ik}=0$ for any $i,k$, which means $B_{1\bar{1}}=0$. So the smallest eigenvalue of $B$, hence $\Ric^{(1)}$, must be zero, which means that $M$ (has nef ant-canonical line bundle but) is not Fano.
\end{proof}

While this is certainly highly restrictive, but without the assumption on the vanishing of the first $t$-Gauduchon Ricci, we do not know how to get the flatness/K\"ahlerity of $(M^n, g)$. One could certainly replace that by the vanishing of the second $t$-Gauduchon Ricci, or simply the vanishing of the trace of either of them, namely, the vanishing of the first (= second) $t$-Gauduchon scalar curvature.

\begin{corollary}\label{coro:71} Let $(M^n, g)$ be a compact t-GAS manifold with $t\ne 2$. Then
$\,\Ric^{(t)(1)}=\Ric^{(1)}=\Ric^{(3)}$ and $\,\Ric^{(t)(3)}-\Ric^{(3)}=-\frac{t^2}{4}B$. So the scalar curvatures satisfy  $\,S^{(t)(1)}=S^{(1)}=S^{(3)}$ and  $\,S^{(t)(3)}-S^{(3)}=-\frac{t^2}{4}|T|^2$. In particular, the vanishing of $S^{(t)(1)}$ and $S^{(t)(3)}$ would imply that $(M^n, g)$ is K\"ahler and flat.
\end{corollary}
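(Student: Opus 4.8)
The plan is to establish Corollary~\ref{coro:71} as the ``trace version'' of Theorem~\ref{prop1.10}: rather than setting the full first and third $t$-Gauduchon Ricci tensors to zero, one contracts the structure identities (\ref{eq:6.11}) and (\ref{eq:6.12}), reads off relations between the $t$-Gauduchon and Chern Ricci tensors, and then takes metric traces. Fix $p\in M$ and a local unitary frame near $p$; write $T^{j}_{ik}$, $R_{i\bar jk\bar\ell}$, $R^{(t)}_{i\bar jk\bar\ell}$ for the components of the Chern torsion, Chern curvature, and $t$-Gauduchon curvature, set $B_{k\bar j}=\sum_{r,s}T^{j}_{rs}\overline{T^{k}_{rs}}$ and $\eta_i=\sum_kT^{k}_{ki}$. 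Two inputs are used throughout: since $g$ is $t$-GAS we have $\nabla^{(t)}T=0$, so the covariant-derivative terms $T^{\ell}_{ik,\bar j}$ in (\ref{eq:6.11}) and (\ref{eq:6.12}) drop out, leaving purely algebraic identities; and since $t\ne 2$, Proposition~\ref{lemma6.9} gives that $g$ is balanced, i.e.\ $\eta\equiv 0$.

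Next I would extract three contracted identities. Summing (\ref{eq:6.12}) over $k=\ell$: the two terms carrying $\sum_kT^{k}_{kr}=\eta_r$ vanish by balancedness, and the two remaining quadratic terms $\sum_{k,r}T^{r}_{ik}\overline{T^{r}_{jk}}$ and $\sum_{k,r}T^{k}_{ir}\overline{T^{k}_{jr}}$ agree after the relabelling $k\leftrightarrow r$ and cancel, giving $\Ric^{(t)(1)}=\Ric^{(1)}$. Summing (\ref{eq:6.12}) over $i=\ell$: the $\eta_r$-term dies again, the term $\sum_{i,r}T^{j}_{ir}\overline{T^{k}_{ir}}$ is precisely $B_{k\bar j}$, and the two surviving quadratic terms are negatives of one another (use $T^{r}_{ik}=-T^{r}_{ki}$ together with a relabelling), giving $\Ric^{(t)(3)}-\Ric^{(3)}=-\tfrac{t^2}{4}B$. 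Finally, summing (\ref{eq:6.11}) over $k=\ell$: the $\eta_r$-term vanishes and the surviving quadratic terms cancel after $k\leftrightarrow r$, giving $\Ric^{(1)}=\Ric^{(3)}$. Assembling these yields $\Ric^{(t)(1)}=\Ric^{(1)}=\Ric^{(3)}$ and $\Ric^{(t)(3)}-\Ric^{(3)}=-\tfrac{t^2}{4}B$, and taking traces with respect to $\omega$, using $\operatorname{tr}B=\sum_iB_{i\bar i}=|T|^2$, produces the scalar identities $S^{(t)(1)}=S^{(1)}=S^{(3)}$ and $S^{(t)(3)}-S^{(3)}=-\tfrac{t^2}{4}|T|^2$.

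For the last assertion, assume $S^{(t)(1)}=S^{(t)(3)}=0$. Then $S^{(3)}=S^{(t)(1)}=0$, hence $-\tfrac{t^2}{4}|T|^2=S^{(t)(3)}-S^{(3)}=0$; since $t\ne 0$ this forces $|T|^2\equiv0$, so $T=0$ and $g$ is K\"ahler. With $g$ K\"ahler every $\nabla^{(t)}$ coincides with the Levi--Civita connection, which is therefore Ambrose--Singer, so $(M,g)$ is a compact locally Hermitian symmetric space; flatness then follows as in the concluding step of the proof of Theorem~\ref{prop1.10} (via the K\"ahler case of the structure theorem, Theorem~\ref{thm:Class}).

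I expect the main obstacle to be the combinatorial bookkeeping of the contractions: one must keep straight which of the four slots of $R_{i\bar jk\bar\ell}$ and $R^{(t)}_{i\bar jk\bar\ell}$ is contracted, so as to identify first- versus third-Ricci traces correctly, and then verify that, after using $\eta\equiv 0$, every residual term quadratic in the torsion either equals $B$ or is cancelled by a partner via the antisymmetry $T^{j}_{ik}=-T^{j}_{ki}$ and a relabelling of summation indices. The only other delicate points are where the hypotheses on $t$ enter: $t\ne 2$ is used to invoke Proposition~\ref{lemma6.9} (hence $\eta\equiv 0$), and $t\ne 0$ is used in the final step to pass from the scalar identity to $T\equiv 0$ and thus to the K\"ahler and flat conclusion.
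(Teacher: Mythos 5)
Your derivation of the displayed identities is correct and is precisely how the paper obtains them: Corollary \ref{coro:71} carries no separate proof in the text and is meant to be read off from the same contractions of (\ref{eq:6.11}) and (\ref{eq:6.12}) performed in the proof of Theorem \ref{prop1.10}, using $\nabla^{(t)}T=0$ to remove the covariant-derivative terms and Proposition \ref{lemma6.9} (so $\eta\equiv 0$) to remove the trace-of-torsion terms. Your bookkeeping of the three contractions, the cancellations via $T^{j}_{ik}=-T^{j}_{ki}$ and index relabelling, the identification of $\sum_{i,r}T^{j}_{ir}\overline{T^{k}_{ir}}$ with $B_{k\bar j}$, and the trace identity $\operatorname{trace}(B)=|T|^2$ all check out, so the four displayed equalities of the corollary are established by essentially the paper's own argument. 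The point that $t\neq 0$ is needed to pass from $S^{(t)(3)}-S^{(3)}=-\frac{t^2}{4}|T|^2=0$ to $T\equiv 0$ is well taken; this is covered by the standing assumption $t\neq 0,2$ at the start of \S 7 even though the corollary only writes $t\neq 2$.

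There is, however, a genuine gap in your last step, namely the claim that ``flatness then follows as in the concluding step of the proof of Theorem \ref{prop1.10}.'' In Theorem \ref{prop1.10} the full first and third Ricci tensors vanish, so once $T=0$ the manifold is a K\"ahler locally symmetric space that is Ricci flat, hence flat. Under the weaker hypothesis of the corollary you only know that the scalar curvature vanishes, and a compact K\"ahler locally symmetric space with zero scalar curvature need not be flat: take $\mathbb{P}^1\times\Sigma$ with $\Sigma$ a compact hyperbolic Riemann surface and scale the two factors so that the scalar curvatures cancel; this is $t$-GAS for every $t$ (all Gauduchon connections coincide with the Levi-Civita connection, which is Ambrose--Singer), has $S^{(t)(1)}=S^{(t)(3)}=0$, and is not flat. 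So the scalar hypothesis yields ``K\"ahler'' but not ``flat,'' and the appeal to the concluding step of Theorem \ref{prop1.10} does not transfer. As far as I can tell this defect is present in the corollary as stated in the paper, not only in your write-up; a correct version would either retain the full Ricci hypotheses of Theorem \ref{prop1.10} or add a hypothesis excluding nonflat Hermitian symmetric de Rham factors.
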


\vspace{0.3cm}

\section{Appendix: The Hopf manifold is BAS}

In this appendix, we give the calculation showing that the standard Hopf manifold $(M^n,g)$ is BAS, where $M^n=({\mathbb C}^n\setminus \{0\})/ \langle f\rangle$, $f(z)=2z$, $z=(z_1, \ldots , z_n)$, $g_{i\bar{j}} =\frac{1}{|z|^2}\delta_{ij}$ where $|z|^2=|z_1|^2 +\cdots + |z_n|^2$.

Let $e$ be the local unitary frame with $e_i = |z| \frac{\partial}{\partial z_i}$. Its dual coframe  $\varphi$ is given by $\varphi_i = \frac{1}{|z|}dz_i$. The matrix of the metric under the local holomorphic coordinate $z$ is $g=\frac{1}{|z|^2}I$.  Write $P=|z|I$ for the frame change matrix, then the Chern connection matrix under $e$ is given by
$$ \theta = P(\partial g g^{-1})P^{-1} + dP P^{-1} = (-\partial + \frac{1}{2}d)\log |z|^2 \,I .$$
Write $\nabla_{e_k}e_i=\sum_j \theta_{ij}(e_k)e_j  = \Gamma^j_{ik} e_j$, then the connection coefficients are $\Gamma^j_{ik}= -\frac{\overline{z}_k}{2|z|}\delta_{ij}$. On the other hand, by the structure equation, we have $\tau = d\varphi + \,^t\!\theta \wedge \varphi$, so the Chern torsion components under $e$ are
\begin{equation*} \label{Aeq:torsion}
 T^j_{ik} =  \frac{\overline{z}_k}{|z|} \delta_{ji} -\frac{\overline{z}_i}{|z|} \delta_{jk} .
  \end{equation*}
Therefore the Bismut connection coefficients under $e$ are
$$\Gamma'^{j}_{ik}= \Gamma^j_{ik} + T^j_{ik} =  \frac{\overline{z}_k}{2|z|} \delta_{ji} -\frac{\overline{z}_i}{|z|} \delta_{jk} , $$
and the Bismut connection matrix under $e$ are
\begin{equation} \label{Aeq:thetab}
 \theta^b_{ij} = \sum_k \{ \Gamma'^j_{ik}\varphi_k  - \overline{\Gamma'^i_{jk}} \overline{\varphi}_k  \}  = \frac{1}{2}(\overline{\partial} -\partial )\log |z|^2 \,\delta_{ij}  + \frac{1}{|z|^2} ( d\overline{z}_i z_j - \overline{z}_i dz_j ).
 \end{equation}
Write $ \Theta^b = d\theta^b -\theta^b \wedge \theta^b$, then by a direct computation we find that the Bismut curvature components $R^b_{i\bar{j}k\bar{\ell}} = \Theta^b_{k\ell } (e_j , \overline{e}_j)$ under $e$ are:
\begin{equation} \label{Aeq:Rb}
 R^b_{i\bar{j}k\bar{\ell}} = \delta_{i\ell} \delta_{kj} - \delta_{ij} \delta_{k\ell} + \frac{1}{|z|^2} \{ \overline{z}_iz_j \delta_{k\ell} + \overline{z}_kz_{\ell} \delta_{ij} -  \overline{z}_iz_{\ell} \delta_{kj} -  \overline{z}_kz_j \delta_{i\ell} \} .
 \end{equation}

{\bf Claim 1:} The Hopf manifold $(M^n,g)$ has Bismut parallel torsion (namely, BTP): $\nabla^b T^b=0$.

It is well known that $(M^n,g)$ is Vaisman, namely, locally conformally K\"ahler with its Lee form being parallel under the Levi-Civita connection $\nabla^g$. In fact, the K\"ahler form of $g$ is
$$\omega = \sqrt{-1} \frac{\partial \overline{\partial} |z|^2}{|z|^2},$$
so $d\omega = \psi\wedge \omega$ with $\psi = -d\log |z|^2$, which is a global closed $1$-form on $M^n$. Hence $g$ is locally conformally K\"ahler with Lee form $\psi$. It is easy to check that $\nabla^g \psi =0$, so $g$ is Vaisman. By \cite[Corollary 3.8]{AndV} we know that $g$ is BTP.

Alternatively, one could also directly verify that $g$ is BTP, namely, $\nabla^bT^b=0$, which is equivalent to $\nabla^bT=0$ as the Bismut torsion $T^b$ and Chern torsion $T$ can be expressed in terms of each other. $\nabla^bT=0$ means that
\begin{eqnarray*}
&&   e_s(T^j_{ik}) - \sum_r \{ T^j_{rk} \Gamma'^r_{is} +  T^j_{ir} \Gamma'^r_{ks} - T^r_{ik} \Gamma'^j_{rs} \} \, = \,0, \\
&& \overline{e_s}(T^j_{ik}) +  \sum_r \{ T^j_{rk} \overline{\Gamma'^i_{rs}} + T^j_{ir} \overline{\Gamma'^k_{rs}}  - T^r_{ik} \overline{\Gamma'^r_{js}}  \} \, = \, 0 ,
\end{eqnarray*}
for any $1\leq i.j,k,s\leq n$. By the expressions for $T^j_{ik}$ and $\Gamma'^j_{ik}$ before, one can verify the above directly. This establishes the proof of Claim 1. \qed

\vspace{0.1cm}

 {\bf Claim 2:} The Hopf manifold $(M^n,g)$ is not Bismut flat when $n\geq 3$.

To see this, note that the trace of $\theta^b\wedge \theta^b$ is zero, so by (\ref{Aeq:thetab}) we get
$$ \mbox{tr}(\Theta^b)=d \,\mbox{tr}(\theta^b) = \frac{n-2}{2}\partial \overline{\partial} \log |z|^2. $$
So when $n\geq 3$, the first Bismut Ricci curvature of the Hopf manifold $(M^n,g)$ does not vanish identically, thus it is not Bismut flat. \qed

\vspace{0.1cm}
We remark that Claim 2 also follows from Theorem 5.9 of \cite{AndV}, which asserts that the holonomy of the above  Hopf manifold is $\mathsf{U}(n-1)$ when $n\ge 3$.

 {\bf Claim 3:} The Hopf manifold $(M^n,g)$ has parallel Bismut curvature: $\nabla^bR^b=0$.

 This means that for any indices $1\leq i,j,k,\ell, s\leq n$, it holds that
 \begin{equation} \label{Aeq:nablaRb}
 e_s(R^b_{i\bar{j}k\bar{\ell}}) = \sum_r \{ R^b_{r\bar{j}k\bar{\ell}}  \Gamma'^r_{is} -  R^b_{i\bar{r}k\bar{\ell}}  \Gamma'^j_{rs}  +   R^b_{i\bar{j}r\bar{\ell}}  \Gamma'^r_{ks} -  R^b_{i\bar{j}k\bar{r}}  \Gamma'^{\ell}_{rs}    \} .
 \end{equation}
By (\ref{Aeq:Rb}), we notice that
\begin{equation*}
 R^b_{i\bar{j}k\bar{\ell}} = - R^b_{k\bar{j}i\bar{\ell}}, \ \ \ \ \forall \ 1\leq i,j,k,\ell \leq n.
 \end{equation*}
So a component $R^b_{i\bar{j}k\bar{\ell}}$ can be non-trivial only if $i\neq k$ and $j\neq \ell$. If $i=k$, then the second and fourth terms in the right hand side of (\ref{Aeq:nablaRb}) vanish, while the first and third term add up to zero, so (\ref{Aeq:nablaRb}) holds. Similarly, the equality holds when $j=\ell$, thus in the following we will assume that $i\neq k$ and $j\neq \ell$.

From (\ref{Aeq:Rb}), we compute that
\begin{equation} \label{Aeq:1}
e_s(R^b_{i\bar{j}k\bar{\ell}} ) = \big(\frac{\overline{z}_i}{|z|} \delta_{k\ell} -  \frac{\overline{z}_k}{|z|} \delta_{i\ell} \big) \big( \delta_{js} - \frac{\overline{z}_s z_j }{|z|^2} \big) + \big(\frac{\overline{z}_k}{|z|} \delta_{ij} -  \frac{\overline{z}_i}{|z|} \delta_{kj} \big) \big( \delta_{\ell s} - \frac{\overline{z}_s z_{\ell} }{|z|^2} \big).
\end{equation}

Next we notice that  the unitary group $U(n)$ acts holomorphically and isometrically on the universal cover of $(M^n,g)$, so we just need to verify (\ref{Aeq:nablaRb})  at the special points $p=(\lambda , 0 ,\ldots , 0)$, where $\lambda >0$ is an arbitrary constant. At $p$, the Bismut curvature components are
$$ R^b_{i\bar{j}k\bar{\ell}} (p) = \delta_{i\ell} \delta_{kj} - \delta_{ij} \delta_{k\ell} + \{ \delta_{ij1} \delta_{k\ell} + \delta_{k\ell 1} \delta_{ij} - \delta_{i\ell 1} \delta_{kj} - \delta_{kj1} \delta_{i\ell} \}, $$
where we wrote $\delta_{ij1}$ for $\delta_{ij}\delta_{i1}$. Therefore $R^b_{i\bar{j}k\bar{\ell}} (p)=0$ when $\{i,k\}\neq \{ j,\ell\}$, while
$$R^b_{i\bar{i}k\bar{k}} (p) = - R^b_{k\bar{i}i\bar{k}} (p) = -1 +\delta_{i1}+\delta_{k1}, \ \ \ \ \forall \  i\neq k. $$
By (\ref{Aeq:1}), we know that the value of $e_s(R^b_{i\bar{j}k\bar{\ell}} )$ at $p$ is equal to
$$ \big( e_s(R^b_{i\bar{j}k\bar{\ell}} ) \big) (p) = \big( \delta_{i1}\delta_{k\ell} - \delta_{k1}\delta_{i\ell } \big) \big( \delta_{js} -\delta_{js1}\big) +   \big( \delta_{i1}\delta_{kj} - \delta_{k1}\delta_{ij} \big) \big( \delta_{\ell s} -\delta_{\ell s1}\big)  . $$
Note that this quantity is zero when $i,k>1$, and when $i=1<k$ it equals to
$$\delta_{k\ell }  \big( \delta_{js} -\delta_{js1}\big) + \delta_{kj}  \big( \delta_{\ell s} -\delta_{\ell s1}\big) .$$

Now let us assume that $j\notin \{ i,k\}$. Then the left hand side of (\ref{Aeq:nablaRb}) is
$$  L := \big( \delta_{i1}\delta_{k\ell} - \delta_{k1}\delta_{i\ell } \big) \big( \delta_{js} -\delta_{js1}\big) ,$$
while the right hand side of (\ref{Aeq:nablaRb}) is
$$  R:= R^b_{j\bar{j}k\bar{\ell}}  \Gamma'^j_{is} -  \big(  R^b_{i\bar{i}k\bar{\ell}}  \Gamma'^j_{is}  + R^b_{i\bar{k}k\bar{\ell}}  \Gamma'^j_{ks} \big) +   R^b_{i\bar{j}j\bar{\ell}}  \Gamma'^j_{ks}  = (R^b_{j\bar{j}k\bar{\ell}}  - R^b_{i\bar{i}k\bar{\ell}}  )  \Gamma'^j_{is}  + ( R^b_{k\bar{k}i\bar{\ell}}  - R^b_{j\bar{j}i\bar{\ell}}  )  \Gamma'^j_{ks}. $$
If $\ell \notin \{ i,k\}$, then both $L$ and $R$ are zero, thus equal. If $\ell =i$, then $L=-\delta_{k1}\big( \delta_{js}-\delta_{js1}\big)$, while
$$ R=  \{ R^b_{k\bar{k}i\bar{i}}  - R^b_{j\bar{j}i\bar{i}}  \} \Gamma'^j_{ks} = \{ (-1+\delta_{k1}+\delta_{i1}) - (-1+\delta_{j1}+\delta_{i1}) \} (-\delta_{js}\delta_{k1}) = \big( \delta_{j1} -\delta_{k1} \big) \delta_{js} \delta_{k1}.$$
When $k>1$, both are zero, while when $k=1$, both are equal to $\delta_{js1}-\delta_{js}$. Similarly, if $\ell =k$, then we have $L=R$.

We are now left with the case $\{ i,k\} = \{ j, \ell \}$ and $i\neq k$. Without loss of generality, let us assume that $i=k\neq j=\ell$. In this case, the left hand side of (\ref{Aeq:nablaRb}) is
$$ \big( e_s(R^b_{i\bar{i}k\bar{k}} ) \big) (p) = \delta_{i1}(\delta_{is} -\delta_{is1}) - \delta_{k1} (\delta_{ks} - \delta_{ks1}) = 0, $$
and the right hand side of (\ref{Aeq:nablaRb}) is
$$ \sum_r \{ R^b_{r\bar{i}k\bar{k}}  \Gamma'^r_{is} -  R^b_{i\bar{r}k\bar{k}}  \Gamma'^i_{rs}  +   R^b_{i\bar{i}r\bar{k}}  \Gamma'^r_{ks} -  R^b_{i\bar{i}k\bar{r}}  \Gamma'^{k}_{rs}    \}     =        R^b_{i\bar{i}k\bar{k}}  \Gamma'^i_{is} -  R^b_{i\bar{i}k\bar{k}}  \Gamma'^i_{is}  +   R^b_{i\bar{i}k\bar{k}}  \Gamma'^k_{ks} -  R^b_{i\bar{i}k\bar{k}}  \Gamma'^{k}_{ks}  = 0 .$$
So in all cases the equality (\ref{Aeq:nablaRb}) holds. This completes the proof of Claim 3. \qed

\vspace{0.1cm}

In summary, we have shown that the Hopf manifold $(M^n,g)$ is always BAS, and it is not Bismut flat when $n\geq 3$.

\bigskip

\bigskip

\noindent\textbf{Acknowledgments.} {We would like to thank C. B\"ohm for his interests and bringing our attention to a result of Alekseevski\u{i} and B. N.  Kimel\'{}fel\'{}d,  P. Petersen and H. Wu for their enthusiastic interests and encouragement,  J. Stanfield and S. Gindi for their interests in extending the result on Chern connection to the BAS manifolds, Y. Ustinovskiy for his interest in extending results to the torsion connection. The first author would like to thank T. Colding and MIT mathematics for the hospitality during his visit when the first draft of the paper was completed, B. Kleiner for insightful questions,  D. Yang and G. Zhang for their hospitality during his visit of CIMS at NYU, and SUSTech for its hospitality at the final stage of the writing. We are also grateful to J. Streets for his interests on the results related to the Bismut connection and for the references \cite{GFS}, \cite{Podesta-R}, A. Raffero for correcting some inaccurate referencing. Finally we also thank R. Lafeunte and a referee for helpful comments on various presentation improvements.}

\end{document}